\documentclass{article}
\def\Bbb{\mathbb}
\def\cal{\mathcal}
\setlength{\textwidth}{14cm} \setlength{\oddsidemargin}{1cm}
\setlength{\evensidemargin}{0cm} \setlength{\topmargin}{0cm}
\setlength{\textheight}{22cm} \catcode`@=11
\usepackage{amsthm,amssymb,amsfonts,mathrsfs}
\usepackage{amsmath}
\catcode`@=11 \@addtoreset{equation}{section}

\catcode`@=12
\newtheorem{Theorem}{Theorem}[section]
\newtheorem{Proposition}{Proposition}[section]
\newtheorem{Lemma}{Lemma}[section]
\newtheorem{Corollary}{Corollary}[section]
\theoremstyle{definition}

\newtheorem{Definition}{Definition}[section]
\newtheorem{Remark}{Remark}

\newtheorem{Assumptions}{Hypothesis}[section]
\def\R{{\mathbb{R}}}

\def\cH{\mathcal H}

\def\ds{\displaystyle}

\def\fd{\mathfrak{d}}
\def\fg{\mathfrak{g}}
\def\fh{\mathfrak{h}}

\def\vp{\varphi}
\def\ve{\varepsilon}
\def\V1{\mathcal{V}_1}
\def\V2{\mathcal{V}_2}
\def\cK{\mathcal K}
\def\cW{\mathcal W}
%%%%%%%%%%%%%%%%%%%%%%%%%%%%%%%%%%%%%%%

\title {Interior degenerate/singular parabolic equations in nondivergence form: well-posedness and Carleman estimates}
\author{{\sc Genni Fragnelli}\\
Dipartimento di Matematica\\ Universit\`{a} di Bari "Aldo Moro"\\
Via
E. Orabona 4\\ 70125 Bari - Italy\\ email: genni.fragnelli@uniba.it}

\date{}
\begin{document}

\maketitle

\vspace{0.3cm}

\centerline{ {\it  }}

\begin{abstract}
We consider non smooth general degenerate/singular parabolic equations in non divergence form with degeneracy and singularity occurring in the interior of the spatial domain, in presence of Dirichlet or Neumann boundary conditions. In particular, we consider well posedness of the problem and then we prove
 Carleman estimates for the associated adjoint problem.
 \end{abstract}

Keywords: degenerate equation, singular equation, interior degeneracy, interior singula\-ri\-ty, Carleman
estimates, observability inequalities.

MSC 2013: 35K65, 93B05, 93B07.
%%%%%%%%%%%%%%%%%%%%%%%%%%%%%%%%%%%%
%%%%%%%%%%%%%%%%%%%%%%%%%%%%%%%%%%%
%%%%%%%%%%%%%%%%%%%%%%%%%%%%%%%%%%%%
%%%%%%%%%%%%%%%%%%%%%%%%%%%%%%%%%%%%
%%%%%%%%%%%%%%%%%%%%%%%%%%%%%%%%%%%%
%%%%%%%%%%%%%%%%%%%%%%%%%%%%%%%%%%%%%%%%
%%%%%%%%%%%%%%%%%%%%%%%%%%%%%%%%%%%%%%%

\section{Introduction}
The present paper is devoted to give a full analysis of the following problem:
\begin{equation}\label{linear}
\begin{cases}
u_t - a(x)u_{xx}  - \displaystyle \frac{\lambda}{b(x)}u=h(t,x) \chi_{\omega}(x), & (t,x) \in Q_T,\\
Bu(0)=Bu(1)=0, &t \in (0,T),\\
u(0,x)=u_0(x), & x \in (0,1),
\end{cases}
\end{equation}
where $Bu(x)= u(t,x)$ or $Bu(x)= u_x(t,x)$ for all $t \in [0,T]$,
$Q_T:=(0,T) \times (0,1)$, $\chi_\omega$ is the characteristic function of a set $\omega \subset (0,1)$, $u_0 \in L^2_{\frac{1}{a}}(0,1)$ and $h \in L^2_{\frac{1}{a}}(Q_T):= L^2(0,T; L^2_{\frac{1}{a}}(0,1))$. Here $L^2_{\frac{1}{a}}(0,1)$ is the Hilbert space
\[L^2_{\frac{1}{a}}(0,1) :=\left\{ u \in L^2(0,1) \
\mid \int_0^1 \frac{u^2}{a} dx <\infty \right\},\]
endowed with the inner product
\[
\langle u,v\rangle_{L^2_{\frac{1}{a}}(0,1)}^2:= \int_0^1
\frac{uv}{a} dx, \quad \mbox{ for every }u,v\in L^2_{\frac{1}{a}}(0,1),
\]
which induces the obvious associated norm.

Moreover, we assume that the constant $\lambda$ satisfies suitable assumptions
described below and the functions $a$ and
$b$, that can be {\it non smooth}, degenerate at the same interior point $x_0 \in
(0,1)$ that can belong to the control set $\omega$. The fact that both $a$ and $b$ degenerate at $x_0$ is just for the sake of simplicity and shortness: all the stated results are still valid if they degenerate at different points.
We shall admit different types of degeneracy for $a$ and $b$. In
particular, we make the following assumptions:
\begin{Assumptions}\label{Ass0}
{\bf Double weakly degenerate case (WWD):} there exists $x_0\in
(0,1)$ such that $a(x_0)=b(x_0) =0$, $a, b>0$ on $[0, 1]\setminus
\{x_0\}$, $a, b\in W^{1,1}(0,1)$ and there exist $K_1, K_2 \in
(0,1)$ such that $(x-x_0)a' \le K_1 a$ and $(x-x_0)b' \le K_2 b$
a.e. in $[0,1]$.
\end{Assumptions}

\begin{Assumptions}\label{Ass01}
{\bf Double strongly degenerate case (SSD):} there exists $x_0 \in
(0,1)$ such that $a(x_0)=b(x_0)=0$, $a, b>0$ on $[0, 1]\setminus
\{x_0\}$, $a, b\in W^{1, \infty}(0,1)$ and there exist $K_1, K_2 \in
[1,2)$ such that $(x-x_0)a' \le K_1 a$ and $(x-x_0)b' \le K_2 b$
a.e. in $[0,1]$.
\end{Assumptions}

\begin{Assumptions}\label{Ass0_1}
{\bf Weakly strongly degenerate case (WSD):} there exists $x_0 \in
(0,1)$ such that $a(x_0)=b(x_0)=0$, $a, b>0$ on $[0, 1]\setminus
\{x_0\}$, $a\in W^{1,1}(0,1)$, $ b\in W^{1, \infty}(0,1)$ and there
exist $K_1\in (0,1)$, $K_2 \in [1,2)$ such that $(x-x_0)a' \le K_1
a$ and $(x-x_0)b' \le K_2 b$ a.e. in $[0,1]$.
\end{Assumptions}

\begin{Assumptions}\label{Ass01_1}
{\bf Strongly weakly degenerate case (SWD):} there exists $x_0 \in
(0,1)$ such that $a(x_0)=b(x_0)=0$, $a, b>0$ on $[0, 1]\setminus
\{x_0\}$, $a\in W^{1, \infty}(0,1)$, $b\in W^{1,1}(0,1)$, and there
exist $K_1\in [1,2)$, $K_2 \in (0,1)$ such that $(x-x_0)a' \le K_1
a$ and $(x-x_0)b' \le K_2 b$ a.e. in $[0,1]$.
\end{Assumptions}
Typical examples for the previous degeneracies and singularities are
$a(x)=|x- x_0|^{K_1}$ and $b(x)= |x-x_0|^{K_2}$, with $ 0<K_1,
K_2<2$.

In the last recent years an increasing interest has been devoted to \eqref{linear} in the case when $\lambda=0$.
For example, we recall the works \cite{acf}, \cite{bcg}, \cite{bfm}, \cite{bfm1}, \cite{cfr}-\cite{cmv1}, \cite{f}-\cite{fggr}, where the authors focus their attention mainly on  well posedness and on global null controllability for \eqref{linear}, also via Carleman estimates (for the nonlinear case see also \cite{fl}). We recall that \eqref{linear} is said globally null controllable if for
every $u_0 \in L^2_{\frac{1}{a}}(0,1)$ there exists $h \in L^2_{\frac{1}{a}}(Q_T)$ such that
the solution $u$ of \eqref{linear}
satisfies
$
u(T,x)= 0 \ \text{ for every  } \  x \in [0, 1]
$
and
$
\|h\|^2_{L^2_{\frac{1}{a}}(Q_T)} \le C \|u_0\|^2_{L^2_{\frac{1}{a}}(0,1)}
$
for some universal positive constant $C$.

If $\lambda \neq0$, the
first results in this direction are obtained in \cite{vz2} for the
heat operator with singular potentials
 \begin{equation}\label{4}
  u_t -
u_{xx}-\lambda\frac{1}{x^{K_2}}u, \quad (t,x) \in Q_T,
\end{equation}
and Dirichlet boundary conditions.
The case ${K_2}=2$ is the critical one and it is the case of the so-called inverse square
potential that arises for example in quantum mechanics (see, e.g., \cite{bg}, \cite{d}) or in combustion problems
(see, e.g., \cite{bv}, \cite{dglv}, \cite{gv}).
This potential is known to generate interesting phenomena: in \cite{bg} and in \cite{bg1} it is proved, for example, that if ${K_2} < 2$ then global
positive solutions exist for any value of $\lambda$,
whereas,  if ${K_2} > 2$ then instantaneous and complete blow-up occurs for any value of
$\lambda$. Finally, when ${K_2} = 2$, the value of the parameter determines the behavior of
the equation:  if $\lambda \le
\displaystyle\frac{1}{4}$ (which is the optimal constant of the
Hardy inequality) then global positive solutions exist, whereas, if $\lambda > \displaystyle\frac{1}{4}$ then instantaneous and complete blow-up occurs.

Moreover, in \cite{e}, \cite{fs}, \cite{fs1}, \cite{v1} and \cite{vz2}, great attention is given to null controllability in the case $\lambda \neq 0$. Indeed, in \cite{vz2}, new Carleman estimates (and consequently null
controllability properties) were established for \eqref{4}
 under the
condition $\lambda \le \displaystyle\frac{1}{4}$. On the contrary,
if $\lambda >\displaystyle\frac{1}{4}$, in \cite{e}, it was proved
that null controllability fails.

Recently, in \cite{v1}, J. Vancostenoble studies the operator that
couples a degenerate diffusion coefficient with a singular
potential. In particular, under suitable conditions on ${K_1}$,
${K_2}$ and $\lambda$, the author established Carleman estimates for
the operator
\[
u_t - (x^{K_1} u_x)_x-\lambda\frac{1}{x^{K_2}}u, \quad (t,x) \in
Q_T,
\]
unifying the results of \cite{cmv} and \cite{vz2} in the purely
degenerate operator and in the purely singular one, respectively.
This result was then extended in \cite{fs} and in \cite{fs1} to the
operators
\begin{equation}\label{ultimo?}
u_t - (a(x) u_x)_x-\lambda\frac{1}{x^{K_2}}u, \quad (t,x) \in Q_T,
\end{equation}
under different assumptions on $a$ and ${K_2}$. Here, as before, the
function $a$ degenerates at the boundary of the space domain and Dirichlet boundary conditions are in force.

However, all the previous papers deal with a degenerate/singular
operator with degeneracy or singularity at the boundary of the
domain. For example, in \eqref{ultimo?} as $a$, one can consider the double power
function
\[a(x)= x^k(1-x)^\alpha, \quad x \,\in \,[0,1],\]
where $k$ and $\alpha$ are positive constants. To the best of our knowledge, \cite{bfm}, \cite{bfm1}, \cite{fm}, \cite{fm1} and \cite{fggr} are the first
papers deal with well posedness and  Carleman estimates  (and, consequently, null
controllability) for operators (in divergence and in non divergence
form with Dirichlet or Neumann boundary conditions) with purely degeneracy (i.e. $\lambda=0$) at the interior of the
space domain.
In particular, \cite{fm1} is the first paper that deals with a {\it non smooth} degenerate function $a$.

Recently, in \cite{fm2} the authors treat for the first time well posedness and  null controllability for operator with Dirichlet boundary conditions in {\it divergence form } with a degeneracy and a singularity (i.e. $\lambda \neq 0$) both that occurring in the interior of the domain (we refer to \cite{fm2} for other references on this subject). We underline the fact that in the present paper we cannot use the results of \cite{fm2}, since the equation in non divergence form {\it cannot} be recast, in general, from the equation in
divergence form: for example, if $\lambda =0$, it was proved in \cite{fggr} that the simple equation
\[
u_t=a(x)u_{xx}
\]
can be written in divergence form as
\[
u_t=(au_x)_x-a'u_x,
\]
only if $a'$ does exist; in addition, even if $a'$ exists, considering  well-posedness for the last equation, additional conditions are
necessary: for instance, for the prototype $a(x)= |x-x_0|^{K_1}$,
well-posedness is guaranteed if ${K_1} \ge 2$ (see \cite{fggr}). However, in
\cite{fm1} the authors prove that if $a(x)=|x-x_0|^{K_1}$ global null
controllability fails exactly when ${K_1} \ge 2$. Thus, it is important to prove directly that, under suitable
conditions for which well-posedness holds, the problem in non divergence form is still
globally null controllable.
\medskip

For this reason, the object of this paper is twofold:  first we analyze well-posedness of
\eqref{linear} for a \textit{ge\-ne\-ral}
degenerate diffusion coefficient and a \textit{general} singular
potential, with degeneracy and singularity at the \textit{interior}
of the space domain; second, under suitable conditions on all
the parameters of \eqref{linear}, we prove related global Carleman estimates. 
Finally, as a consequence of Carleman estimates, using a reflection procedure, we prove an obser\-va\-bility inequality: there exists a positive constant $C_T$ such that every solution
$v$ of the adjoint problem
\[
\begin{cases}
v_t +av_{xx} +\displaystyle \frac{ \lambda}{b(x)}v= 0, &(t,x) \in
Q_T,
\\
Bv(0)=Bv(1) =0, & t \in (0,T),
\\
v(T,x)= v_T(x)\in L^2_{\frac{1}{a}}(0,1),
\end{cases}
\]
satisfies, under suitable assumptions, 
 \begin{equation}\label{obs}
\|v(0)\|^2_{L^2_{\frac{1}{a}}(0,1)} \le C_T\|v\chi_\omega\|^2_{L^2_{\frac{1}{a}}(Q_T)}.
\end{equation}
As an immediate consequence, one can prove, using a standard
technique (e.g., see \cite[Section 7.4]{LRL}), 
null controllability for the linear degenerate/singular problem \eqref{linear}.

Clearly, this result generalizes the result obtained in
\cite{bfm1}: in fact, if we consider Neumann boundary conditions and if $\lambda =0$ (that is, if we consider the
purely degenerate case), we obtain exactly the result of \cite{bfm1} in the case of a problem in non divergence form.

Finally, we remark that also in the case of degenerate and singular problems  a key step in
the proof of Carleman estimates is not only the correct choice
of the weight functions, but also some  special inequalities that we will
show later, together with Hardy--Poincar\'{e} inequalities (see Subsections \ref{HPDBC} and \ref{HPNBC}).
\medskip

The paper is organized in the following way: in Section \ref{secPR}, which is divided into two subsections,  we give some preliminary results, such as Hardy--Poincar\'e inequalities, that will be useful for the rest of the paper. In Section \ref{sec3}
we study well posedness of the problem applying the previous inequalities. In Section \ref{sec4}, we prove Carleman estimates and we use them, together with a Caccioppoli type
inequality, to prove
observability inequalities in Section \ref{sec5}.

A final comment on the notation: by $C$ we shall denote 
universal positive constants, which are allowed to vary from line to
line.  Moreover, in the rest of the paper we will write, for shortness, {\bf (Dbc)} or {\bf (Nbc)} in place of Dirichlet boundary conditions or Neumann ones, respectively.

\section{Preliminary results}\label{secPR}
In this part of the paper we give different weighted  Hardy--Poincar\'e inequalities that will be very important for the rest of the paper. 
In particular, we divide this section into two subsections. In the first one we give Hardy--Poincar\'e inequalities in the case of Dirichlet boundary conditions; in the last one we prove them in the case of Neumann ones. In order to deal with these  inequalities we  consider different classes of weighted Hilbert spaces,
which are suitable to study the four different situations given in the Introduction. We remark that we shall use the standard notation $H$ for
spaces with degenerate weights and (Dbc) and the calligraphic
notation ${\cal H}$ for spaces with degenerate weights and (Nbc). Thus, we introduce
\[
\cK_a:= \begin{cases}
H^1_{\frac{1}{a}}(0,1) := L^2_{\frac{1}{a}}(0,1)\cap H^1_0(0,1), & \text{if (Dbc) hold},\\
\cH^1_{\frac{1}{a}}(0,1) :=L^2_{\frac{1}{a}}(0,1)\cap H^1(0,1), & \text{if (Nbc) are in force},
\end{cases}
\]
and
\[\cK_{a,b} :=\left \{u \in \cK_a: \frac{u}{\sqrt{ab}}\in L^2(0,1)\right\}\]
with  the inner products
\[
\langle u, v \rangle_{\cK_a}=\int_0^1 \frac{uv}{a}dx + \int_0^1 u'v' dx, 
\]
and
\[
\langle u,v\rangle_{\cK_{a,b}} =  \int_0^1 \frac{uv}{a}dx + \int_0^1 u'v' dx + \int_0^1 \frac{uv}{ab}dx, 
\]
respectively.

\vspace{0.5cm}
Moreover, we will use the following results several times; we state the first lemma for $a$, but an analogous one holds for  $b$ replacing $K_1$ with $K_2$:
\begin{Lemma}[Lemma 2.1, \cite{fm}]\label{Lemma 2.1}
Assume that there exists $x_0 \in (0,1)$ such that $a(x_0)=0$, $a>0$ on $[0, 1]\setminus
\{x_0\}$, and
\begin{itemize}
\item $a\in W^{1, 1}(0,1)$ and there
exist $K_1\in (0,1)$ such that $(x-x_0)a' \le K_1
a$ a.e. in $[0,1]$, or
\item $a\in W^{1, \infty}(0,1)$ and there
exist $K_1\in [1,2)$ such that $(x-x_0)a' \le K_1
a$ a.e. in $[0,1]$.
\end{itemize}
\begin{enumerate}
\item Then for all $\gamma \ge K_1$ the map
$$
\begin{aligned}
& x \mapsto \dfrac{|x-x_0|^\gamma}{a} \mbox { is non increasing on
the left of } x=x_0 \\
& \mbox{and non decreasing on the right of }
x=x_0,\\
&\mbox{ so that }\lim_{x\to x_0}\dfrac{|x-x_0|^\gamma}{a}=0 \mbox{
for all }\gamma>K_1.
\end{aligned}
$$
\item If $K_1<1$, then
    $\displaystyle\frac{1}{a} \in L^{1}(0,1)$.\\
\item If $K _1\in[1,2)$, then $\displaystyle \frac{1}{\sqrt{a}} \in
    L^{1}(0,1)$ and $\displaystyle \frac{1}{a}\not \in L^1(0,1)$.
\end{enumerate}
\end{Lemma}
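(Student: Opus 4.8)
The plan is to deduce all three statements from one differentiation argument applied to the function $f_\gamma(x):=|x-x_0|^\gamma/a(x)$, exploiting that $a$, lying in $W^{1,1}(0,1)$ in one space dimension (and $W^{1,\infty}(0,1)\subset W^{1,1}(0,1)$ in the strongly degenerate case), has an absolutely continuous representative whose pointwise derivative agrees a.e.\ with the weak derivative $a'$. For item 1, fix $\gamma\ge K_1$. On every compact $[\alpha,\beta]\subset(x_0,1]$ the function $a$ is continuous and strictly positive, hence bounded below by some $m>0$; since $t\mapsto 1/t$ is Lipschitz on $[m,\infty)$ and $x\mapsto(x-x_0)^\gamma$ is $C^1$ there, $f_\gamma$ is absolutely continuous on $[\alpha,\beta]$, with $f_\gamma'(x)=\dfrac{(x-x_0)^{\gamma-1}}{a(x)^2}\bigl(\gamma a(x)-(x-x_0)a'(x)\bigr)$ a.e. Since $(x-x_0)a'\le K_1a\le\gamma a$, the parenthesis is $\ge 0$ while $(x-x_0)^{\gamma-1}>0$, so $f_\gamma'\ge 0$ a.e., and absolute continuity then gives that $f_\gamma$ is non-decreasing on $(x_0,1]$. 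The symmetric computation on $[0,x_0)$, where $(x_0-x)a'=-(x-x_0)a'\ge-K_1a\ge-\gamma a$, gives $f_\gamma'\le 0$ a.e., hence $f_\gamma$ non-increasing on $[0,x_0)$. Being monotone on each side and nonnegative, $f_\gamma$ then has finite one-sided limits at $x_0$ and is bounded on some $(x_0-\delta,x_0+\delta)$. For the vanishing of the limit when $\gamma>K_1$, I pick $\gamma_0\in(K_1,\gamma)$, use the previous step to get $f_{\gamma_0}\le M$ near $x_0$, and estimate $f_\gamma(x)=|x-x_0|^{\gamma-\gamma_0}f_{\gamma_0}(x)\le M|x-x_0|^{\gamma-\gamma_0}\to 0$ as $x\to x_0$ from either side.

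Items 2 and 3 are then integrability comparisons. If $K_1<1$, item 1 with $\gamma=K_1$ gives $1/a\le M|x-x_0|^{-K_1}$ near $x_0$, integrable since $K_1<1$, while $1/a$ is continuous (hence bounded) away from $x_0$; thus $1/a\in L^1(0,1)$. If $K_1\in[1,2)$, the same bound yields $1/\sqrt a\le\sqrt M\,|x-x_0|^{-K_1/2}$ near $x_0$, integrable because $K_1/2<1$, so $1/\sqrt a\in L^1(0,1)$; for the failure $1/a\notin L^1(0,1)$ I would instead use that here $a\in W^{1,\infty}(0,1)$ is Lipschitz with constant $L:=\|a'\|_{L^\infty(0,1)}$ and $a(x_0)=0$, whence $a(x)\le L|x-x_0|$ and $1/a(x)\ge(L|x-x_0|)^{-1}$ near $x_0$, a non-integrable minorant.

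The argument is elementary; the one point needing care is passing from the a.e.\ sign of $f_\gamma'$ to the monotonicity of $f_\gamma$, which is why I localize on intervals away from $x_0$ and invoke absolute continuity (of $a$, and of $1/a$ where $a$ is bounded away from zero) rather than mere a.e.\ differentiability. Apart from that, everything reduces to comparison with the model powers $|x-x_0|^{-K_1}$, $|x-x_0|^{-K_1/2}$ and $|x-x_0|^{-1}$.
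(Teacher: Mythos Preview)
The paper does not actually prove this lemma; it is quoted from \cite{fm} and used as a tool throughout, so there is no proof in the present paper to compare your attempt against. That said, your argument is correct and is essentially the standard one: differentiate $f_\gamma(x)=|x-x_0|^\gamma/a(x)$ on compact subintervals of $[0,x_0)$ and $(x_0,1]$ (where $a$ is bounded below so $f_\gamma$ inherits absolute continuity from $a$), read off the sign of $f_\gamma'$ from the structural inequality $(x-x_0)a'\le K_1 a$, and then use monotonicity plus the splitting $f_\gamma=|x-x_0|^{\gamma-\gamma_0}f_{\gamma_0}$ for the limit. Your integrability comparisons in items~2 and~3, including the use of the Lipschitz bound $a(x)\le L|x-x_0|$ (available precisely because $a\in W^{1,\infty}$ in the strongly degenerate case) to show $1/a\notin L^1$, are also fine.
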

For the next result we make the  following assumption:
\begin{Assumptions}\label{Assnew}
The  functions $a, b$ are such that
\begin{enumerate}
\item
$a, b \in W^{1,\infty}(0,1)$,
or
\item
$
a, b \in W^{1,1}(0,1)$ and there exist $K_1, K_2, c_1, c_2 >0$ such that $K_1+ K_2 \ge 1$ and
\begin{equation}\label{iponew}
  \ds |x-x_0|^{K_1}\ge c_1 a\;\text{ and }\; \ds |x-x_0|^{K_2} \ge c_2b\; \text{  for all  } x \in  [0,1].
\end{equation}
\end{enumerate}
\end{Assumptions}
Observe that the last assumption is not restrictive. Indeed, if we consider the prototype functions $a(x) = |x-x_0|^{K_1}$ and $b(x) = |x-x_0|^{K_2}$, with $K_1+K_2\ge1$, the last part of Hypothesis $\ref{Assnew}.2$ is clearly  satisfied with $c_1 =c_2=1$.
\begin{Lemma}\label{leso} Assume that Hypothesis $\ref{Assnew}$ holds. Then
\begin{enumerate}
\item $\displaystyle \frac{1}{ab} \not \in L^1(0,1)$;
\item $u(x_0)=0$ for every $u\in \cK_{a,b}(0,1)$.
\end{enumerate}
\end{Lemma}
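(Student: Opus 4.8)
The plan is to establish part (1) by a direct pointwise comparison of $ab$ with a power of $|x-x_0|$, and then to deduce part (2) from part (1) using the fact that $H^1$ functions of one variable have continuous representatives.

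For part (1), I would split according to the two alternatives in Hypothesis \ref{Assnew}. If $a,b\in W^{1,\infty}(0,1)$, then, since $a(x_0)=b(x_0)=0$, the identity $a(x)=\int_{x_0}^x a'(s)\,ds$ gives $0\le a(x)\le \|a'\|_{L^\infty(0,1)}\,|x-x_0|$, and likewise $0\le b(x)\le \|b'\|_{L^\infty(0,1)}\,|x-x_0|$; hence $0\le a(x)b(x)\le C|x-x_0|^2$ for a.e. $x\in(0,1)$, so that $\frac{1}{ab}\ge \frac{c}{|x-x_0|^2}$ a.e. If instead $a,b\in W^{1,1}(0,1)$ and \eqref{iponew} holds, then $0\le a(x)b(x)\le \frac{1}{c_1c_2}\,|x-x_0|^{K_1+K_2}$, so that $\frac{1}{ab}\ge c_1c_2\,|x-x_0|^{-(K_1+K_2)}$ a.e. In both cases $\frac{1}{ab}$ is bounded below near $x_0$ by a constant times $|x-x_0|^{-\sigma}$ with $\sigma\ge 1$ (namely $\sigma=2$, respectively $\sigma=K_1+K_2$); since $x_0\in(0,1)$ and $\int_{x_0-\delta}^{x_0+\delta}|x-x_0|^{-\sigma}\,dx=+\infty$ for every $\sigma\ge 1$ (the divergence being merely logarithmic when $\sigma=1$), we conclude $\frac{1}{ab}\notin L^1(0,1)$.

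For part (2), note first that $\cK_{a,b}(0,1)\subset\cK_a\subset H^1(0,1)$, so every $u\in\cK_{a,b}(0,1)$ admits an absolutely continuous representative on $[0,1]$ and the value $u(x_0)$ is well defined. Arguing by contradiction, suppose $u(x_0)\neq0$. By continuity there are $\delta>0$ and $c>0$ such that $u(x)^2\ge c$ for all $x\in(x_0-\delta,x_0+\delta)\cap(0,1)$, whence
\[
\int_0^1\frac{u^2}{ab}\,dx\;\ge\; c\int_{(x_0-\delta,x_0+\delta)\cap(0,1)}\frac{1}{ab}\,dx\;=\;+\infty
\]
by part (1); this contradicts $\frac{u}{\sqrt{ab}}\in L^2(0,1)$, which is built into the definition of $\cK_{a,b}(0,1)$. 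Hence $u(x_0)=0$.

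I do not expect any real obstacle: the only points that merit a little care are the borderline case $K_1+K_2=1$ in part (1), where the relevant integral diverges only logarithmically, and the invocation of the one-dimensional Sobolev embedding $H^1(0,1)\hookrightarrow C([0,1])$ in part (2), which is what makes the pointwise assertion $u(x_0)=0$ meaningful in the first place.
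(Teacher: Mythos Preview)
Your proposal is correct and follows essentially the same route as the paper: a pointwise comparison of $ab$ with a power of $|x-x_0|$ for part (1), and a continuity-plus-contradiction argument for part (2). The only cosmetic difference is that in the $W^{1,\infty}$ case the paper bounds the product $(ab)(x)=\int_{x_0}^x(ab)'(s)\,ds\le C|x-x_0|$ directly, whereas you bound $a$ and $b$ separately to get $ab\le C|x-x_0|^2$; both yield the desired non-integrability.
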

\begin{proof}
{\bf 1.} First of all assume
that  Hypothesis $\ref{Assnew}.1$ is satisfied. Then the assumptions on $a$ and $b$ imply $(ab)(x) = \displaystyle \int_{x_0}^x(ab)'(s)ds.$ 
Thus there exists a positive constant $C$ such that, if Hypothesis $\ref{Assnew}.1$ is satisfied, then
\[
(ab)(x)=|(ab)(x)| \le C|x-x_0|.
\]
Hence, for all $x\neq x_0$ and for a suitable constant $C>0$, $\displaystyle
\frac{1}{(ab)(x)} \ge C \frac{1}{|x-x_0|} \not \in L^1(0,1)$.
\\
Assume now that Hypothesis $\ref{Assnew}.$2 is satisfied. Then
\[
\frac{1}{ab} \ge \frac{c_1c_2}{|x-x_0|^{K_1+ K_2}} \not \in L^1(0,1),
\]
being $K_1+ K_2 \ge 1$.
\\
{\bf 2.}
Since $u\in W^{1,1}(0,1)$, there exists $\lim_{x\to x_0}u(x)=L\in \R$. If $L\neq 0$, then $\displaystyle |u(x)|\geq \frac{L}{2}$ in a neighborhood of $x_0$, that is
\[
\frac{|u(x)|^2}{ab}\geq \frac{L^2}{4ab}\not \in L^1(0,1)
\]
by the first point, and thus $L=0$.
\end{proof}
We also need the following result, whose proof, with the aid of Lemma \ref{leso}, is a simple adaptation of the one given in \cite[Lemma 3.2]{fggr}.
\begin{Lemma}\label{densita}
Assume that Hypothesis $\ref{Assnew}$ is satisfied. Then
\[
H^1_c(0,1):=\Big\{u\in H_0^1(0,1)\mbox{ \rm such that supp}\,u\subset (0,1)\setminus\{x_0\}\Big\}
\] 
is dense in $\cK_{a,b} :=\left \{u \in H^1_{\frac{1}{a}}(0,1):\ds \frac{u}{\sqrt{ab}}\in L^2(0,1)\right\}$.
\end{Lemma}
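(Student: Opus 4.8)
I would follow the scheme of \cite[Lemma 3.2]{fggr}, the only genuinely new input being Lemma \ref{leso}. First I would record the two elementary facts that make everything run. On the one hand, $H^1_c(0,1)\subset\cK_{a,b}$, since any $u\in H^1_c(0,1)$ is bounded and supported in a compact subset of $(0,1)\setminus\{x_0\}$, on which $a$, $b$ and $ab$ are bounded below by positive constants. On the other hand, by definition $\cK_{a,b}\subset H^1_{\frac{1}{a}}(0,1)\subset H^1_0(0,1)\hookrightarrow C[0,1]$, so every $u\in\cK_{a,b}$ is continuous with $u(0)=u(1)=0$, and by Lemma \ref{leso}.2 also $u(x_0)=0$; hence, writing $u(x)=\ds\int_{x_0}^x u'(s)\,ds$ and using the Cauchy--Schwarz inequality,
\begin{equation}\label{ptwbd}
|u(x)|^2\le|x-x_0|\,\|u'\|^2_{L^2(0,1)},\qquad x\in[0,1].
\end{equation}

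The argument then has two steps. \textbf{Step 1: truncation near $x_0$.} Fixing $\delta\in(0,\min\{x_0,1-x_0\})$, I would use the \emph{logarithmic} cut-offs $\psi_n\colon[0,1]\to[0,1]$ defined by $\psi_n\equiv1$ outside $(x_0-\delta,x_0+\delta)$, $\psi_n\equiv0$ on $[x_0-\delta e^{-n},x_0+\delta e^{-n}]$, and $\psi_n(x)=1+\ds\frac1n\log\frac{|x-x_0|}{\delta}$ for $\delta e^{-n}\le|x-x_0|\le\delta$, for which $\ds\int_0^1(\psi_n')^2|x-x_0|\,dx=\frac2n$ and $\psi_n\to1$ pointwise on $(0,1)\setminus\{x_0\}$. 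I would then check $\psi_n u\to u$ in $\cK_{a,b}$ term by term: $\ds\int_0^1(1-\psi_n)^2\frac{u^2}{a}\,dx\to0$ and $\ds\int_0^1(1-\psi_n)^2\frac{u^2}{ab}\,dx\to0$ by dominated convergence (here $\frac{u^2}{ab}\in L^1(0,1)$ by the very definition of $\cK_{a,b}$); and, since $(\psi_n u-u)'=(\psi_n-1)u'+\psi_n'u$, the term $\ds\int_0^1(1-\psi_n)^2(u')^2\,dx\to0$ by dominated convergence, while \eqref{ptwbd} gives
\[
\int_0^1(\psi_n')^2u^2\,dx\le\|u'\|^2_{L^2(0,1)}\int_0^1(\psi_n')^2|x-x_0|\,dx=\frac2n\|u'\|^2_{L^2(0,1)}\longrightarrow0 .
\]
Each $v_n:=\psi_n u$ then lies in $H^1_0(0,1)$ and vanishes on a neighbourhood of $x_0$.

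\textbf{Step 2: truncation near the endpoints.} With $n$ fixed and $r:=\delta e^{-n}$, $v_n$ restricts to an element of $H^1_0(0,x_0-r)$ and of $H^1_0(x_0+r,1)$; approximating each restriction in $H^1$ by $C_c^\infty$ functions of the corresponding open interval and extending them by zero yields a sequence in $H^1_c(0,1)$ converging to $v_n$ in $H^1(0,1)$, and hence also in $\cK_{a,b}$, because on $[0,x_0-r]\cup[x_0+r,1]$ the weights $\frac1a$ and $\frac1{ab}$ are bounded above and below by positive constants. A diagonal argument combining the two steps then proves that $H^1_c(0,1)$ is dense in $\cK_{a,b}$.

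The main obstacle is the derivative bound $\ds\int_0^1(\psi_n')^2u^2\,dx\to0$ in Step 1: with a piecewise-linear cut-off of vanishing width this integral, in general, stays merely bounded. It is forced to $0$ by the logarithmic profile of $\psi_n$, which makes $\ds\int_0^1(\psi_n')^2|x-x_0|\,dx\to0$, together with the pointwise bound \eqref{ptwbd} — and \eqref{ptwbd} in turn rests on $u(x_0)=0$, that is, on Lemma \ref{leso}, hence on Hypothesis \ref{Assnew}.
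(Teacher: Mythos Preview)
Your proof is correct and follows precisely the approach indicated in the paper, which simply refers to \cite[Lemma~3.2]{fggr} and notes that Lemma~\ref{leso} supplies the needed condition $u(x_0)=0$. In fact you give considerably more detail than the paper does, and your use of the logarithmic cut--off together with the pointwise bound \eqref{ptwbd} is exactly the mechanism behind that adaptation.
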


 \subsection {Hardy--Poincar\'e inequalities in the case of (Dbc)}\label{HPDBC}
The first inequality is proved in
\cite[Proposition 2.6]{fm} (we refer also to \cite[Proposition 1.1]{fm1} for some comments):
\begin{Proposition}\label{HP}
Assume that $p \in C([0,1])$, $p>0$ on $[0,1]\setminus \{x_0\}$,
$p(x_0)=0$ and there exists $q >1$ such that the function
\begin{equation}\label{p}
\begin{aligned}
x \mapsto \dfrac{p(x)}{|x-x_0|^{q}} &\mbox { is
non increasing on the left of } x=x_0 \\
& \mbox{ and non
decreasing on the right of } x=x_0.
\end{aligned}
\end{equation}
\noindent Then, there exists a constant $C_{HP}>0$ such that for any
function $w$, locally absolutely continuous on $[0,x_0)\cup (x_0,1]$, satisfying
$$
w(0)=w(1)=0 \,\, \mbox{and } \int_0^1 p(x)|w^{\prime}(x)|^2 \,dx <
+\infty \,,
$$ the following inequality holds:
\begin{equation}\label{hardy1}
\int_0^1 \dfrac{p(x)}{(x-x_0)^2}w^2(x)\, dx \leq C_{HP}\, \int_0^1
p(x) |w^{\prime}(x)|^2 \,dx.
\end{equation}
\end{Proposition}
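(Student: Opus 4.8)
The plan is to reduce the weighted Hardy--Poincaré inequality on $(0,1)$ with a degenerate weight $p$ vanishing at the interior point $x_0$ to two one-sided Hardy inequalities, one on $(0,x_0)$ and one on $(x_0,1)$. First I would split the integrals: it suffices to prove
\[
\int_0^{x_0} \frac{p(x)}{(x-x_0)^2} w^2 \, dx \le C \int_0^{x_0} p(x) |w'|^2 \, dx
\]
together with the symmetric estimate on $(x_0,1)$, and then add. On each subinterval the weight degenerates only at one endpoint ($x_0$), while $w$ vanishes at the other endpoint ($0$ or $1$); the monotonicity hypothesis \eqref{p} is exactly what is needed to control the comparison between $p(x)$ and $|x-x_0|^q$ near the degeneracy.

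The key technical step is a classical weighted Hardy inequality of the form: if $\mu$ is a weight on an interval $(0,\ell)$ such that the function $x\mapsto \mu(x)/x^q$ is monotone (here with $q>1$), and $w$ is locally absolutely continuous with $w(0)=0$, then $\int_0^\ell \mu(x) x^{-2} w^2\,dx \le C_q \int_0^\ell \mu(x)|w'|^2\,dx$. I would either invoke a known version of this (it is the Hardy inequality with power weights, made robust by the monotonicity assumption, exactly as in \cite{fm}) or prove it directly: write $w(x)=\int_0^x w'(s)\,ds$, apply Cauchy--Schwarz with the splitting $w'(s) = \bigl(w'(s)\sqrt{\mu(s)}\bigr)\cdot\bigl(1/\sqrt{\mu(s)}\bigr)$ or with a power weight $s^{-\alpha}$ to be optimized, and then use Fubini together with the monotonicity of $\mu(x)/x^q$ to bound the resulting double integral. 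The constant $C_{HP}$ comes out depending only on $q$. Translating back via the change of variables $x\mapsto x_0-x$ on the left of $x_0$ and $x\mapsto x-x_0$ on the right gives the two one-sided estimates; note the hypotheses $w(0)=w(1)=0$ and $\int_0^1 p|w'|^2<\infty$ are precisely the assumptions needed for the one-sided Hardy inequalities to apply on each piece (the finiteness of $\int_0^1 p|w'|^2$ in particular makes the right-hand sides finite).

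The main obstacle is handling the degeneracy at the \emph{interior} point robustly, i.e. making sure that the monotonicity hypothesis \eqref{p} genuinely substitutes for an explicit power behaviour of $p$. Concretely, in the Hardy computation one needs an inequality like $p(x) \le (|x-x_0|/|y-x_0|)^q\, p(y)$ for $y$ between $x$ and $x_0$ (on the appropriate side), which is immediate from the fact that $p(x)/|x-x_0|^q$ is non-increasing/non-decreasing toward $x_0$; one must be careful that $q>1$ (strictly) is what guarantees the relevant integral $\int_0^{x_0}|x-x_0|^{q-2}\,dx$ converges, so the borderline case $q=1$ is genuinely excluded. The rest is bookkeeping: combining the two one-sided bounds and absorbing constants into a single $C_{HP}$.
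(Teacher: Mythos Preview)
Your plan is correct and coincides with the argument the paper inherits from \cite{fm} (visible explicitly in the proof of Proposition~\ref{HPN}): split at $x_0$, represent $w$ as an integral from the outer boundary where it vanishes, apply Cauchy--Schwarz with a power weight $(y-x_0)^{\pm\beta/2}$ for some $\beta\in(1,q)$, then use Fubini and the monotonicity of $p(x)/|x-x_0|^q$. One small slip: the comparison you actually need after Fubini is $p(x)\le(|x-x_0|/|y-x_0|)^q\,p(y)$ when $x$ (not $y$) lies between $x_0$ and $y$, which is exactly what the monotonicity hypothesis gives.
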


\vspace{0.5cm}

Using the weighted spaces introduced before we can prove the next Hardy--Poincar\'e inequalities.
First, we make the following assumption:
\begin{Assumptions}\label{Ass03_new}
\begin{enumerate}
\item
Hypothesis $\ref{Ass0}$ holds with $K_1+K_2 <1$, or
\item Hypothesis $\ref{Ass0}$ holds with $1\le  K_1+K_2 \le 2$ and 
\begin{equation}\label{iponewnew}
\exists \; c_1, c_2 >0 \text{ such that }  \ds |x-x_0|^{K_1}\ge c_1 a\; \text{ and }\; \ds |x-x_0|^{K_2} \ge c_2b\; \text{  for all  } x \in  [0,1],
\end{equation}
or
\item Hypothesis $\ref{Ass0_1}$ or $\ref{Ass01_1}$  holds with $K_1+K_2\leq 2$ and \eqref{iponewnew}, or 
\item Hypothesis $\ref{Ass01}$ holds with $K_1=K_2=1$.
\end{enumerate}
\end{Assumptions}
\begin{Lemma}\label{L2} Assume that Hypothesis $\ref{Ass03_new}.1$ holds.
Then  there exists a constant $\overline C_{HP} >0$ such that
\begin{equation}\label{1}
\int_0^1 \frac{u^2}{ab} \le  \overline C_{HP}  \int_0^1 (u')^2dx
\end{equation}
 for every $u
\in \cK_a$.
\end{Lemma}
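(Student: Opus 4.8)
The plan is to deduce \eqref{1} from the weighted Hardy--Poincar\'e inequality of Proposition \ref{HP}, applied to a suitably chosen weight. Set
\[
p(x):=\frac{(x-x_0)^2}{a(x)b(x)}\quad\text{for }x\in[0,1]\setminus\{x_0\},\qquad p(x_0):=0,
\]
so that $\dfrac{p(x)}{(x-x_0)^2}=\dfrac{1}{a(x)b(x)}$ and the left-hand side of \eqref{hardy1} becomes precisely $\int_0^1\frac{u^2}{ab}\,dx$. It then suffices to check that $p$ fulfills the hypotheses of Proposition \ref{HP} and that $p$ is bounded on $[0,1]$; the conclusion follows with $\overline C_{HP}=C_{HP}\,\|p\|_{L^\infty(0,1)}$.

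First I would verify that $p\in C([0,1])$, $p>0$ on $[0,1]\setminus\{x_0\}$ and $p(x_0)=0$: positivity and continuity away from $x_0$ are immediate since $a,b\in W^{1,1}(0,1)$ are absolutely continuous and strictly positive there, while at $x_0$, using $K_1+K_2<1<2$, one may pick $\gamma_1>K_1$, $\gamma_2>K_2$ with $\gamma_1+\gamma_2=2$ and write $p(x)=\frac{|x-x_0|^{\gamma_1}}{a(x)}\cdot\frac{|x-x_0|^{\gamma_2}}{b(x)}$, whence $p(x)\to0$ as $x\to x_0$ by Lemma \ref{Lemma 2.1}(1) applied to $a$ and to $b$; in particular $p$ is continuous on the compact $[0,1]$, hence bounded. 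Next I would check the monotonicity condition \eqref{p}. Since $K_1+K_2<1$, the interval $(1,\,2-K_1-K_2]$ is nonempty, so we may fix $q$ in it (for instance $q=\frac{3-K_1-K_2}{2}$). Then $\frac{p(x)}{|x-x_0|^q}=\frac{|x-x_0|^{\,2-q}}{a(x)b(x)}$, and writing $2-q=\alpha+\beta$ with $\alpha\ge K_1$ and $\beta\ge K_2$, this equals $\frac{|x-x_0|^{\alpha}}{a}\cdot\frac{|x-x_0|^{\beta}}{b}$, a product of two nonnegative functions each of which is nonincreasing on the left of $x_0$ and nondecreasing on its right by Lemma \ref{Lemma 2.1}(1); hence the product has the same monotonicity, which is exactly \eqref{p}.

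Finally, given $u\in\mathcal K_a=H^1_{\frac1a}(0,1)\subset H^1_0(0,1)$, the function $u$ is locally absolutely continuous on $[0,x_0)\cup(x_0,1]$, satisfies $u(0)=u(1)=0$, and $\int_0^1 p\,|u'|^2\,dx\le\|p\|_{L^\infty(0,1)}\int_0^1|u'|^2\,dx<\infty$, so Proposition \ref{HP} applies and gives
\[
\int_0^1\frac{u^2}{ab}\,dx=\int_0^1\frac{p(x)}{(x-x_0)^2}\,u^2\,dx\le C_{HP}\int_0^1 p\,|u'|^2\,dx\le C_{HP}\,\|p\|_{L^\infty(0,1)}\int_0^1(u')^2\,dx,
\]
which is \eqref{1}. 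The only genuinely delicate point is the choice of the admissible exponent $q>1$ in \eqref{p}: the assumption $K_1+K_2<1$ of Hypothesis \ref{Ass03_new}.1 is precisely what makes the range $(1,\,2-K_1-K_2]$ nonempty, whereas the weaker bound $K_1+K_2<2$ is what guarantees $p(x_0)=0$ and the boundedness of $p$.
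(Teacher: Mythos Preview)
Your proof is correct and follows essentially the same strategy as the paper: define $p(x)=\dfrac{(x-x_0)^2}{ab}$, verify condition \eqref{p} via Lemma~\ref{Lemma 2.1}, apply Proposition~\ref{HP}, and then bound $p$ uniformly. The paper is somewhat terser---it simply takes $q:=2-(K_1+K_2)$ and bounds $p$ by $\beta:=\max\{x_0^2/(ab)(0),\,(1-x_0)^2/(ab)(1)\}$ (which coincides with $\|p\|_{L^\infty}$ since the monotonicity argument shows $p$ is maximized at the endpoints)---whereas you spell out the continuity of $p$ at $x_0$ and allow a range of admissible~$q$; but the substance is identical.
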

\begin{proof}
Let $u \in \cK_a$ (recall that in this case $(\cK_a=H^1_{\frac{1}{a}}(0,1))$ and define
$p(x):= \displaystyle
\frac{(x-x_0)^2}{ab}$. Using Lemma \ref{Lemma 2.1} and the assumtpion $K_1+ K_2 <1$, one has that
 the function $\displaystyle\frac{p(x)}{|x-x_0|^q}$,
where $\displaystyle q: =2-(K_1+K_2) >1$, is
non increasing on the left of $x=x_0$ and non decreasing on the right
of $x=x_0$. Thus, Proposition \ref{HP} implies,
\[
\int_0^1 \frac{u^2}{ab}\,dx = \int_0^1 p\frac{u^2}{(x-x_0)^2}\,dx \le C_{HP} \int_0^1
p(u')^2dx \le \beta C_{HP}\int_0^1 (u')^2 dx,
\]
for a positive constant $C_{HP}$, being
\begin{equation}\label{beta}
\beta:=
\max\left\{\displaystyle\frac{x_0^2}{(ab)(0)},\displaystyle\frac{(1-x_0)^2}{(ab)(1)}\right\}.
\end{equation}
Hence \eqref{1} is satisfied with $\overline C_{HP}= \beta C_{HP}$.
\end{proof}
\begin{Lemma}\label{L2''}
Assume that one among Hypothesis $\ref{Ass03_new}.2$, $\ref{Ass03_new}.3$ or $\ref{Ass03_new}.4$ is satisfied. Then there exists a constant $\overline C_{HP}>0$ such that
\eqref{1} holds
 for every $u
\in \cK_{a, b}$.
\end{Lemma}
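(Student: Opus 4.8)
The plan is to run the argument of Lemma~\ref{L2} with the same weight $p(x):=\dfrac{(x-x_0)^2}{ab}$; the difference is that now $K_1+K_2\ge1$, so the exponent $q:=2-(K_1+K_2)$ needed in Proposition~\ref{HP} is no longer $>1$ (and for $K_1+K_2=2$ one even has $p(x_0)\neq0$), so Proposition~\ref{HP} cannot be applied. I would compensate by using that functions of $\cK_{a,b}$ --- unlike those of the larger space $\cK_a$ of Lemma~\ref{L2} --- vanish at $x_0$, together with the classical Hardy inequality centred at the interior point $x_0$. This replacement of $\cK_a$ by $\cK_{a,b}$ is the only genuinely delicate point.

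First I would check that $u(x_0)=0$ for every $u\in\cK_{a,b}$. It suffices to verify that Hypothesis~\ref{Assnew} holds under each of Hypotheses~\ref{Ass03_new}.2, \ref{Ass03_new}.3, \ref{Ass03_new}.4 and then invoke the second point of Lemma~\ref{leso}. Under Hypothesis~\ref{Ass03_new}.4 one has $a,b\in W^{1,\infty}(0,1)$, which is Hypothesis~\ref{Assnew}.1; under Hypotheses~\ref{Ass03_new}.2 and \ref{Ass03_new}.3 one has $a,b\in W^{1,1}(0,1)$ (using $W^{1,\infty}(0,1)\subset W^{1,1}(0,1)$ for the strongly degenerate component), $K_1+K_2\ge1$ --- immediate, since $K_1+K_2\ge1$ in case \ref{Ass03_new}.2 and $K_2\ge1$, resp.\ $K_1\ge1$, in the two instances of \ref{Ass03_new}.3 --- and assumption \eqref{iponewnew}, which is precisely Hypothesis~\ref{Assnew}.2. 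Since in addition $u\in\cK_{a,b}\subset H^1_{\frac{1}{a}}(0,1)\subset H^1_0(0,1)$, $u$ is absolutely continuous on $[0,1]$ with $u(0)=u(1)=0$.

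Next I would show $p\in L^\infty(0,1)$. The first point of Lemma~\ref{Lemma 2.1} --- whose hypotheses are met by both $a$ and $b$ in all the cases at hand --- gives that $x\mapsto\dfrac{|x-x_0|^{K_1}}{a}$ and $x\mapsto\dfrac{|x-x_0|^{K_2}}{b}$ are non increasing on the left of $x_0$ and non decreasing on its right; in particular there are $c_a,c_b>0$ with $a\ge c_a|x-x_0|^{K_1}$ and $b\ge c_b|x-x_0|^{K_2}$ on $(0,1)$, so that, since $|x-x_0|<1$ and $K_1+K_2\le2$,
\[
(ab)(x)\ge c_a c_b\,|x-x_0|^{K_1+K_2}\ge c_a c_b\,(x-x_0)^2 .
\]
Writing moreover $p(x)=\dfrac{|x-x_0|^{K_1}}{a(x)}\,\dfrac{|x-x_0|^{K_2}}{b(x)}\,|x-x_0|^{2-(K_1+K_2)}$ and using $2-(K_1+K_2)\ge0$ shows that $p$ too is non increasing on the left of $x_0$ and non decreasing on its right, hence $\|p\|_{L^\infty(0,1)}=\max\{p(0),p(1)\}=\beta$ with $\beta$ as in \eqref{beta}.

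Finally, since $u$ is absolutely continuous on $[0,x_0)\cup(x_0,1]$ and $u(x_0)=0$, the classical Hardy inequality applied on $(0,x_0)$ and on $(x_0,1)$ separately gives $\displaystyle\int_0^1\frac{u^2}{(x-x_0)^2}\,dx\le4\int_0^1(u')^2\,dx$, so that, combining with the previous step,
\[
\int_0^1\frac{u^2}{ab}\,dx=\int_0^1 p(x)\,\frac{u^2}{(x-x_0)^2}\,dx\le\beta\int_0^1\frac{u^2}{(x-x_0)^2}\,dx\le4\beta\int_0^1(u')^2\,dx ,
\]
i.e.\ \eqref{1} holds with $\overline C_{HP}=4\beta$.
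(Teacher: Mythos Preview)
Your proof is correct. Both arguments ultimately rest on the same two ingredients --- Lemma~\ref{leso} forcing $u(x_0)=0$, and the lower bound $ab\ge c\,|x-x_0|^{K_1+K_2}\ge c\,(x-x_0)^2$ coming from Lemma~\ref{Lemma 2.1} together with $K_1+K_2\le2$ --- but the executions differ. The paper localises: it splits $(0,1)$ at $x_0\pm\varepsilon$, handles the two outer pieces by the ordinary Poincar\'e inequality (using $u(0)=0$ and $u(1)=0$) combined with the trivial lower bound $\min_{[0,x_0-\varepsilon]}ab>0$, and treats the two inner pieces by the classical Hardy inequality centred at $x_0$. You instead observe that the weight $p(x)=(x-x_0)^2/(ab)$ is globally monotone away from $x_0$ and hence bounded by $\beta=\max\{p(0),p(1)\}$, which lets you apply Hardy once on each of $(0,x_0)$ and $(x_0,1)$ without any cut at $x_0\pm\varepsilon$. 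Your route is shorter, uses only the single boundary value $u(x_0)=0$ (the Dirichlet data $u(0)=u(1)=0$ that you record is never actually invoked), and produces the explicit constant $\overline C_{HP}=4\beta$, whereas the paper's constant carries an implicit dependence on the auxiliary parameter $\varepsilon$. The paper's decomposition, on the other hand, is the template that is recycled verbatim in the Neumann analogue (Lemma~\ref{L2'''}), where the ordinary Poincar\'e step is replaced by a bound in $H^1$-norm.
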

\begin{proof}By Lemma \ref{leso} we know that, taken $u\in \cK_{a,b}$,  $u(x_0) =0$. Fix $\ve\in \big(0, \min\{x_0,1-x_0\}\big)$ and write
\[
\int_0^1\frac{u^2}{ab}dx= \left(\int_0^{x_0-\ve}+\int_{x_0-\ve}^{x_0}+\int_{x_0}^{x_0+\ve}+\int_{x_0+\ve}^1\right)\frac{u^2}{ab}dx.
\]
Now, by the Poincar\'e inequality applied to functions in $[0,x_0-\ve]$ vanishing at $0$, we get
\begin{equation}\label{primopezzoD}
\begin{aligned}
\int_0^{x_0-\ve}\frac{u^2}{ab}dx& \leq \frac{1}{\ds\min_{[0,x_0-\ve]}ab}\int_0^{x_0-\ve}u^2dx\leq  \frac{1}{\ds\min_{[0,x_0-\ve]}ab}\int_0^{x_0-\ve}(u')^2dx\le  C\int_0^1(u')^2dx,
\end{aligned}
\end{equation}
for some $C>0$ independent of $u$. A similar estimate holds for $\ds\int_{x_0+\ve}^1\frac{u^2}{ab}dx$.
\\
Moreover, by Lemma \ref{Lemma 2.1}, there exists $C=C(a,b)>0$ such that
\begin{equation}\label{primopezzo2D}
\int_{x_0-\ve}^{x_0}\frac{u^2}{ab}dx\le C\int_{x_0-\ve}^{x_0}\frac{u^2}{|x-x_0|^{K_1+K_2}}dx\leq C\int_{x_0-\ve}^{x_0}\frac{u^2}{|x-x_0|^2}dx,
\end{equation}
being $K_1+ K_2 \le 2$.
 Since $u(x_0)=0$, the classical Hardy--Poincar\'e inequality  implies that
\begin{equation}\label{primopezzo3D}
\int_{x_0-\ve}^{x_0}\frac{u^2}{ab}dx\le  C\int_{x_0-\ve}^{x_0}(u')^2dx,
\end{equation}
for a suitable constant $C$.
By \eqref{primopezzoD} and \eqref{primopezzo3D}, and operating in a similar way in $[x_0,1]$, the claim follows.
\end{proof} 

\vspace{0.5cm}

Observe that the previous estimates give Hardy--Poincar\'e inequalities in all situations, namely the {\em (WWD)}, {\em (SSD)}, {\em
(WSD)} and {\em (SWD)}. However, Lemma \ref{L2''} allows us to consider for the (SSD) case only the situation when $K_1$ and $K_2$ are both $1$.

\subsection{Hardy--Poincar\'e inequalities in the case of (Nbc)}\label{HPNBC}

In this subsection we give the analogous Hardy--Poincar\'e inequalities stated before for the case of Dirichlet boundary conditions.

In particular, the following inequality is the analogous of Proposition \ref{HP} in the Neumann case:
\begin{Proposition}\label{HPN}
Assume that $p \in C([0,1])$, $p>0$ on $[0,1]\setminus \{x_0\}$,
$p(x_0)=0$ and there exists $q >1$ such that
\eqref{p} holds.
 Then, there exists a constant $C_{HP}>0$ such that for any
function $w$, locally absolutely continuous on $[0,x_0)\cup (x_0,1]$, satisfying
$$
w'(0)=w'(1)=0 \,\, \mbox{and } \int_0^1 |w^{\prime}(x)|^2 \,dx <
+\infty \,,
$$ the following inequality holds:
\begin{equation}\label{hardy2}
\int_0^1 \dfrac{p(x)}{(x-x_0)^2}w^2(x)\, dx \leq C_{HP}\, \int_0^1
p(x) |w^{\prime}(x)|^2 \,dx + 2\Xi\left[ w^2(1) \frac{p(1)}{(1-x_0)^q} + w^2(0)\frac{p(0)}{x_0^q}\right].
\end{equation}
Here
\[
\Xi:=\max\left\{\frac{(1-x_0)^{q-1}}{q-1}, \frac{1}{q-1} \right\}.
\]
\end{Proposition}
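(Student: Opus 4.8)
The plan is to mirror the one–dimensional argument behind Proposition~\ref{HP}, the only new feature being that, since $w(0)$ and $w(1)$ need no longer vanish, they survive as the boundary terms in \eqref{hardy2}. I split $\int_0^1=\int_0^{x_0}+\int_{x_0}^1$ and estimate the two pieces separately; by symmetry I describe only the piece over $(x_0,1)$. To avoid any a priori integrability question at $x_0$, I first run the argument on $(x_0+\delta,1)$ for small $\delta>0$, obtain a bound independent of $\delta$, and then let $\delta\downarrow0$ by monotone convergence.

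On $(x_0+\delta,1)$ write, using local absolute continuity of $w$ on $(x_0,1]$, $w^2(x)=w^2(1)-\int_x^1 2w(s)w'(s)\,ds$, plug this into $\int_{x_0+\delta}^1\frac{p(x)}{(x-x_0)^2}w^2(x)\,dx$, and apply Fubini to the resulting double integral. Writing $\frac{p(x)}{(x-x_0)^2}=\frac{p(x)}{|x-x_0|^q}(x-x_0)^{q-2}$ and using that $\frac{p}{|x-x_0|^q}$ is non decreasing on $(x_0,1]$ by \eqref{p}, one gets $\int_{x_0}^s\frac{p(x)}{(x-x_0)^2}\,dx\le\frac{p(s)}{(q-1)(s-x_0)}$ for the inner integral and $\int_{x_0}^1\frac{p(x)}{(x-x_0)^2}\,dx\le\frac{(1-x_0)^{q-1}}{q-1}\cdot\frac{p(1)}{(1-x_0)^q}\le\Xi\frac{p(1)}{(1-x_0)^q}$ for the boundary contribution. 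Cauchy--Schwarz on the double-integral term (split $\frac{p(s)}{s-x_0}|w||w'|$ as $\frac{\sqrt{p}\,|w|}{s-x_0}\cdot\sqrt{p}\,|w'|$) then gives
\[
\int_{x_0+\delta}^1\frac{p}{(x-x_0)^2}w^2\,dx\le\Xi\frac{p(1)}{(1-x_0)^q}w^2(1)+\frac{2}{q-1}\Big(\int_{x_0+\delta}^1\frac{p}{(x-x_0)^2}w^2\,dx\Big)^{1/2}\Big(\int_{x_0}^1 p|w'|^2\,dx\Big)^{1/2}.
\]
Since the left-hand side is finite for fixed $\delta$, solving this quadratic inequality for its square root and using $(u+v)^2\le2u^2+2v^2$ yields a bound of the form $\frac{4}{(q-1)^2}\int_{x_0}^1 p|w'|^2\,dx+2\Xi\frac{p(1)}{(1-x_0)^q}w^2(1)$, uniformly in $\delta$; letting $\delta\downarrow0$ gives the estimate over $(x_0,1)$.

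The interval $(0,x_0)$ is handled identically, now using $w^2(x)=w^2(0)+\int_0^x 2w(s)w'(s)\,ds$ and the fact that $\frac{p}{|x-x_0|^q}$ is non increasing on $[0,x_0)$ by \eqref{p}; the only extra remark is that the corresponding boundary coefficient is $\frac{x_0^{q-1}}{q-1}\le\frac1{q-1}\le\Xi$ because $0<x_0<1$ and $q>1$ — this is precisely why $\Xi$ is the stated maximum. Adding the two contributions gives \eqref{hardy2} with $C_{HP}=4/(q-1)^2$. I expect the only points needing care to be (i) the justification of Fubini and of the a priori finiteness, dealt with by the $\delta$-truncation and monotone convergence, and (ii) keeping straight which of the two monotonicity directions in \eqref{p} is invoked on each side of $x_0$; the algebra is otherwise the same as in Proposition~\ref{HP}. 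Note in passing that the conditions $w'(0)=w'(1)=0$ are not actually used in the proof — they are stated only because that is the class relevant to the Neumann problem — and that when $w(0)=w(1)=0$ the boundary terms vanish and \eqref{hardy1} is recovered.
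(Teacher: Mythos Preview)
Your proof is correct, but it follows a genuinely different route from the paper's. The paper writes $w(x)=w(1)-\int_x^1 w'(y)\,dy$, squares this via $(a+b)^2\le 2a^2+2b^2$, and then estimates the term $\int_{x_0+\varepsilon}^1\frac{p(x)}{(x-x_0)^2}\bigl(\int_x^1 w'\bigr)^2\,dx$ directly by a weighted Cauchy--Schwarz with an auxiliary exponent $\beta\in(1,q)$ (exactly as in \cite[Proposition~2.6]{fm}), obtaining $C_{HP}=\dfrac{2}{(\beta-1)(q-\beta)}$. You instead use the identity $w^2(x)=w^2(1)-2\int_x^1 ww'$, Fubini, and the monotonicity in \eqref{p} to reduce to a quadratic self-referential inequality in $\sqrt{I_\delta}$, which you then solve. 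Your approach avoids the extra parameter $\beta$ and in fact yields the sharper constant $C_{HP}=4/(q-1)^2$ (the paper's constant, optimized at $\beta=(1+q)/2$, is $8/(q-1)^2$); the paper's approach, on the other hand, recycles the existing Dirichlet argument verbatim and keeps the two contributions --- boundary term and weighted-derivative term --- fully decoupled from the outset. Your observations that the conditions $w'(0)=w'(1)=0$ play no role in the proof and that the $x_0$-side boundary coefficient satisfies $\frac{x_0^{q-1}}{q-1}\le\frac{1}{q-1}\le\Xi$ are both correct and worth recording.
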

\begin{proof}
Fix any $\beta \in (1, q)$ and $\ve>0$ small. Then, since \eqref{p} holds:
\[
\begin{aligned}
&\int_{x_0+\ve}^1 \frac {p(x)}{(x-x_0)^2}w^2(x)\, dx = \int_{x_0+\ve}^1 \frac {p(x)}{(x-x_0)^2}\left[ w(1)-\int_x^1 w'(y) dy\right]^2 dx\\
&\le 2 w^2(1)\int_{x_0+\ve}^1 \frac {p(x)}{(x-x_0)^2}dx + 2 \int_{x_0+\ve}^1\frac {p(x)}{(x-x_0)^2} \left(\int_x^1 w'(y) dy\right)^2 dx\\
&= 2 w^2(1)\int_{x_0+\ve}^1 \frac {p(x)}{(x-x_0)^q}(x-x_0)^{q-2}dx + 2 \int_{x_0+\ve}^1\frac {p(x)}{(x-x_0)^2} \left(\int_x^1 w'(y) dy\right)^2 dx\\
& \le 2 w^2(1)\frac {p(1)}{(1-x_0)^q}\int_{x_0+\ve}^1 (x-x_0)^{q-2}dx + 2 \int_{x_0+\ve}^1\frac {p(x)}{(x-x_0)^2} \left(\int_x^1 w'(y) dy\right)^2 dx\\
& \le 2 w^2(1)\frac {p(1)}{(1-x_0)^q}\frac{(1-x_0)^{q-1}}{q-1}+ 2 \int_{x_0+\ve}^1\frac {p(x)}{(x-x_0)^2} \left(\int_x^1 w'(y) dy\right)^2 dx.
\end{aligned}
\]
Moreover, proceeding as in \cite[Proposition 2.6]{fm}, one can prove that
\begin{equation}\label{HP1N}
\begin{aligned}
\int_{x_0+\ve}^1\!\! \frac {p(x)}{(x-x_0)^2} \left(\int_x^1 w'(y) dy\right)^2 dx &=  \int_{x_0+\ve}^1\!\!\frac {p(x)}{(x-x_0)^2}  \left(\int_x^1 ((y-x_0)^{\frac{\beta}{2}}w'(y))(y-x_0)^{-\frac{\beta}{2}} dy\right)^2\!\!\! dx\\
&\le  \dfrac{1}{(\beta-1)(q-\beta)}\int_{x_0+\ve}^1
p(y)|w^{\prime}(y)|^2 dy.
\end{aligned}
\end{equation}
Hence
\[
\int_{x_0+\ve}^1\!\! \frac {p(x)}{(x-x_0)^2}w^2(x)\, dx\le 2 w^2(1)\frac {p(1)}{(1-x_0)^q}\frac{(1-x_0)^{q-1}}{q-1}+ \dfrac{2}{(\beta-1)(q-\beta)}\int_{x_0+\ve}^1\!\!
p(y)|w^{\prime}(y)|^2 dy.
\]
Analogously, one has

\begin{equation}\label{HP2N}
\begin{aligned}
&\int_0^{x_0-\ve} \frac {p(x)}{(x_0-x)^2}w^2(x)\, dx \le  2 w^2(0)\frac {p(0)}{x_0^q}\frac{1}{q-1} +
\dfrac{2}{(\beta-1)(q-\beta)}\int_0^{x_0-\ve}p(y)|w^{\prime}(y)|^2
dy.
\end{aligned}
\end{equation}
Passing to the limit as $\ve\to 0$ and combining \eqref{HP1N} and
\eqref{HP2N}, the conclusion follows.
\end{proof}
As a consequence, one has the next result:
\begin{Corollary}\label{HPN1}
Assume that $p \in C([0,1])$, $p>0$ on $[0,1]\setminus \{x_0\}$,
$p(x_0)=0$ and there exists $q >1$ such that
\eqref{p} holds.
 Then,
 \begin{enumerate}
 \item there exists a positive constant $C_{HP,1}$ such that   for any
function $w \in H^1(0,1)$ satisfying
$
w'(0)=w'(1)=0$, the following inequality holds:
\begin{equation}\label{HPN2}
 \int_0^1 \dfrac{p(x)}{(x-x_0)^2}w^2(x)\, dx \leq C_{HP,1}\, \|w\|^2_{H^1(0,1)};\end{equation}
\item
  for all $y_0
\in [0,1]$, there exists $C_{HP,2}>0$ such that for any
function $w \in H^1(0,1)$ satisfying
$
w'(0)=w'(1)=0$, the following inequality holds:
\begin{equation}\label{hardy3}
\int_0^1 \dfrac{p(x)}{(x-x_0)^2}w^2(x)\, dx \leq C_{HP,2} \left(\int_0^1 (w')^2(y) dy +
w^2(y_0) \right).
\end{equation}
\end{enumerate}
\end{Corollary}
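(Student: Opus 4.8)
The plan is to obtain both inequalities as direct consequences of Proposition \ref{HPN}, the only real work being to absorb the two boundary terms on its right-hand side. First I would note that, since $x_0\in(0,1)$ and $p\in C([0,1])$, the numbers $p(0)$, $p(1)$, $x_0^{-q}$, $(1-x_0)^{-q}$ and $\Xi$ are all finite constants depending only on $p$, $x_0$ and $q$, and that $\int_0^1 p|w'|^2\,dx\le\|p\|_{C([0,1])}\int_0^1|w'|^2\,dx$. Hence \eqref{hardy2} already yields
\[
\int_0^1\frac{p(x)}{(x-x_0)^2}w^2(x)\, dx\le C\Big(\int_0^1|w'|^2\,dx+w^2(0)+w^2(1)\Big)
\]
for some $C=C(p,x_0,q)>0$ and every admissible $w$ (the local absolute continuity required in Proposition \ref{HPN} being automatic for $w\in H^1(0,1)$, since in one dimension $H^1(0,1)\subset AC([0,1])\subset C([0,1])$).

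For part (1), it then suffices to invoke the one-dimensional Sobolev embedding $H^1(0,1)\hookrightarrow C([0,1])$, which gives $w^2(0)+w^2(1)\le 2\|w\|_{C([0,1])}^2\le C\|w\|^2_{H^1(0,1)}$, together with $\int_0^1|w'|^2\,dx\le\|w\|^2_{H^1(0,1)}$; plugging these into the displayed bound produces \eqref{HPN2} with $C_{HP,1}$ depending only on $p$, $x_0$, $q$. For part (2), fix $y_0\in[0,1]$. Writing $w(x)=w(y_0)+\int_{y_0}^x w'(s)\,ds$ and applying the Cauchy--Schwarz inequality on $(0,1)$ gives $|w(x)|^2\le 2w^2(y_0)+2\int_0^1|w'|^2\,dx$ for every $x\in[0,1]$; evaluating at $x=0,1$ and integrating in $x$ over $(0,1)$ yields the Poincaré-type estimate
\[
\|w\|^2_{H^1(0,1)}\le C(y_0)\Big(\int_0^1|w'|^2\,dx+w^2(y_0)\Big).
\]
Combining this with the displayed bound above (and with $w^2(0)+w^2(1)\le 4w^2(y_0)+4\int_0^1|w'|^2\,dx$) produces \eqref{hardy3}.

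I do not expect a genuine obstacle here: the argument is a routine combination of Proposition \ref{HPN}, the one-dimensional Sobolev embedding, and an elementary Poincaré-type estimate. The only points requiring a little care are the bookkeeping of constants — which are allowed to depend on the fixed data $p$, $x_0$, $q$ and, in part (2), on $y_0$ through the Poincaré step, which is precisely why the statement quantifies $C_{HP,2}$ after $y_0$ — and the remark that the regularity hypothesis of Proposition \ref{HPN} is weaker than membership in $H^1(0,1)$, so that the proposition indeed applies to every such $w$ with $w'(0)=w'(1)=0$.
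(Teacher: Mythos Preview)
Your proof is correct and follows essentially the same route as the paper: both parts come from Proposition~\ref{HPN} by bounding the boundary values $w^2(0),w^2(1)$ via the Sobolev embedding $H^1(0,1)\hookrightarrow C([0,1])$, and part~(2) then follows from part~(1) using the equivalence of $\|\cdot\|_{H^1(0,1)}$ with $\|w'\|_{L^2}+|w(y_0)|$, which you spell out explicitly via the fundamental theorem of calculus and Cauchy--Schwarz while the paper simply quotes it. A minor remark: your own pointwise bound $|w(x)|^2\le 2w^2(y_0)+2\int_0^1|w'|^2$ is uniform in $y_0\in[0,1]$, so the constant $C(y_0)$ you obtain (and hence $C_{HP,2}$) can in fact be taken independent of $y_0$.
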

\begin{proof} {\it 1.}:
Since $H^1(0,1) $ is continuously embedded in
$L^\infty(0,1)$, one has that for all $w \in H^1(0,1)$
\[
|w(y_0)| \le \|w\|_{L^\infty(0,1)}\le C \|w\|_{H^1(0,1)},\; \forall \; y_0 \in [0,1],
\]
for a positive constant $C$.
In particular, $w^2(0)$ and $w^2(1)$ can be estimated by $C
\|w\|_{H^1(0,1)}^2$. Thus, by Proposition \ref{HPN}, \eqref{HPN2} follows immediately.
\\
{\it 2.}: Fix now $y_0\in [0,1]$.
Since the standard $H^1$- norm is equivalent to the norm
\[
\|w\|_{y_0}:= \|w'\|_{L^2(0,1)} + |w(y_0)|,
\]
\eqref{hardy3} follows immediately by \eqref{HPN2}.
\end{proof}

We will proceed with some estimates similar to the ones given in Lemmas \ref{L2} and \ref{L2''}.

\begin{Lemma}\label{L2'}
Assume that Hypothesis $\ref{Ass03_new}.1$ holds. Then there exists a constant $\overline C_{HP}>0$ such that
\begin{equation}\label{1nondiv}
\int_0^1 \frac{u^2}{ab} \le \overline C_{HP}\|u\|^2_{H^1(0,1)}
\end{equation}
for all $u
\in \cK_a(0,1)$ with $u'(0)=u'(1)=0$.
Moreover, if $u(x_0)=0$, then 
\begin{equation}\label{2nondiv}
\int_0^1\frac{u^2}{ab}dx\le \overline C_{HP} \int_0^1 (u')^2(x) dx.
\end{equation}
\end{Lemma}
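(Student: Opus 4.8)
\emph{Proof plan.} The argument is the Neumann counterpart of the one proving Lemma~\ref{L2}: the only change is that Proposition~\ref{HP} is replaced by Corollary~\ref{HPN1}. First I would set
\[
p(x):=\frac{(x-x_0)^2}{(ab)(x)}\quad\text{for }x\in[0,1]\setminus\{x_0\},\qquad p(x_0):=0,
\]
and check that $p$ fulfils the hypotheses of Corollary~\ref{HPN1}. Under Hypothesis~\ref{Ass03_new}.1 one has $K_1+K_2<1$, hence $2-K_1-K_2>0$, and
\[
p(x)=\frac{|x-x_0|^{K_1}}{a}\cdot\frac{|x-x_0|^{K_2}}{b}\cdot|x-x_0|^{2-K_1-K_2}.
\]
By the first assertion of Lemma~\ref{Lemma 2.1} (applied to $a$ with $\gamma=K_1$ and to $b$ with $\gamma=K_2$) each of the first two factors is nonnegative and monotone on either side of $x_0$, hence bounded on $[0,1]$; since the last factor is continuous and tends to $0$ as $x\to x_0$, we conclude that $p\in C([0,1])$, $p>0$ on $[0,1]\setminus\{x_0\}$ and $p(x_0)=0$. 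Finally, with $q:=2-(K_1+K_2)>1$,
\[
\frac{p(x)}{|x-x_0|^{q}}=\frac{|x-x_0|^{K_1}}{a}\cdot\frac{|x-x_0|^{K_2}}{b},
\]
and Lemma~\ref{Lemma 2.1} again shows this is a product of two nonnegative functions, each non-increasing to the left of $x_0$ and non-decreasing to the right; so is their product, i.e. \eqref{p} holds.

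Once $p$ is known to satisfy the assumptions of Corollary~\ref{HPN1}, both estimates follow immediately, using that $\dfrac{p(x)}{(x-x_0)^2}=\dfrac1{(ab)(x)}$ for $x\neq x_0$. Indeed, let $u\in\cK_a(0,1)$ with $u'(0)=u'(1)=0$; since $\cK_a(0,1)\subset H^1(0,1)$, inequality \eqref{HPN2} applies with $w=u$ and gives
\[
\int_0^1\frac{u^2}{ab}\,dx=\int_0^1\frac{p(x)}{(x-x_0)^2}u^2(x)\,dx\le C_{HP,1}\,\|u\|^2_{H^1(0,1)},
\]
which is \eqref{1nondiv}. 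If in addition $u(x_0)=0$ (meaningful since $H^1(0,1)\hookrightarrow C([0,1])$), I would instead invoke \eqref{hardy3} with $y_0=x_0$, obtaining
\[
\int_0^1\frac{u^2}{ab}\,dx\le C_{HP,2}\Big(\int_0^1(u')^2(x)\,dx+u^2(x_0)\Big)=C_{HP,2}\int_0^1(u')^2(x)\,dx,
\]
i.e. \eqref{2nondiv}. Taking $\overline C_{HP}$ to be the larger of the two constants concludes the proof.

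The proof presents no genuine obstacle: the only point requiring care is the verification that the weight $p$ satisfies the hypotheses of Corollary~\ref{HPN1}, and in particular the normalisation $p(x_0)=0$, which is exactly where the strict bound $K_1+K_2<1$ (hence $2-K_1-K_2>0$) enters. Everything else is a transcription of the Dirichlet computation of Lemma~\ref{L2} into the Neumann framework, the additional boundary terms appearing in the Neumann Hardy--Poincar\'e inequality \eqref{hardy2} being absorbed either into $\|u\|^2_{H^1(0,1)}$ when proving \eqref{1nondiv}, or — thanks to $u(x_0)=0$ — into the $y_0$-norm of Corollary~\ref{HPN1}(2) when proving \eqref{2nondiv}.
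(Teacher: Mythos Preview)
Your proof is correct and follows essentially the same approach as the paper's: define $p(x)=(x-x_0)^2/(ab)$, verify via Lemma~\ref{Lemma 2.1} that $p$ satisfies the hypotheses of Corollary~\ref{HPN1} with $q=2-(K_1+K_2)>1$, and then apply \eqref{HPN2} for \eqref{1nondiv} and \eqref{hardy3} with $y_0=x_0$ for \eqref{2nondiv}. You actually spell out the verification of the hypotheses on $p$ in more detail than the paper does (the paper simply refers back to Lemma~\ref{L2}), but the argument is the same.
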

\begin{proof} Let 
$u \in \cK_a$ (recall that in this case $(\cK_a=\cH^1_{\frac{1}{a}}(0,1))$ and define
$p(x):= \displaystyle
\frac{(x-x_0)^2}{ab}$. As in Lemma \ref{L2}, one can prove that the function $p$ satisfies the assumptions of Corollary \ref{HPN1}, thus, applying \eqref{HPN2}, one has
\[
\int_0^1 \frac{u^2}{ab}\,dx = \int_0^1 p\frac{u^2}{(x-x_0)^2}\,dx \le C_{HP,1}\, \|u\|^2_{H^1(0,1)}.
\]
Hence, \eqref{1nondiv} holds with $\overline C_{HP} = C_{HP,1}$.
Moreover, if $u(x_0)=0$, we can apply Corollary \ref{HPN1}.2, obtaining
\[
\int_0^1\frac{u^2}{ab} dx  = \int_0^1 p\frac{u^2}{(x-x_0)^2} \,dx \le C_{HP,2} \left(\int_0^1 (u')^2(x) dx +
u^2(x_0) \right)= C_{HP,2}\int_0^1 (u')^2(x) dx.
\]
In this case $\overline C_{HP}= C_{HP,2}$.
\end{proof}

\begin{Lemma}\label{L2'''}
Assume that one among Hypothesis $\ref{Ass03_new}.2$, $\ref{Ass03_new}.3$ or $\ref{Ass03_new}.4$ is satisfied. Then there exists a constant $\overline C_{HP}>0$ such that
\eqref{2nondiv} holds for every $u \in \cK_{a,b}$  with $u'(0)=u'(1)=0$.
\end{Lemma}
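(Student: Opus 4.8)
The plan is to adapt the proof of Lemma~\ref{L2''} to the Neumann setting, the only real change being the treatment of the two subintervals that touch the endpoints $0$ and $1$, where now no boundary condition helps. First I would note that each of Hypotheses~\ref{Ass03_new}.2, \ref{Ass03_new}.3, \ref{Ass03_new}.4 implies Hypothesis~\ref{Assnew} (its second alternative in the first two cases, since $W^{1,\infty}\subset W^{1,1}$ and $K_1+K_2\ge1$ there, and its first alternative in the last case), so Lemma~\ref{leso} is available: every $u\in\cK_{a,b}$ satisfies $u(x_0)=0$, and since in the present subsection $u\in\cK_a=\cH^1_{\frac{1}{a}}(0,1)\subset H^1(0,1)\hookrightarrow C([0,1])$, this value is attained pointwise and $u$ is absolutely continuous on $[0,1]$.

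Next, fix $\ve\in\big(0,\min\{x_0,1-x_0\}\big)$ and decompose $\ds\int_0^1\frac{u^2}{ab}\,dx$ into the integrals over $[0,x_0-\ve]$, $[x_0-\ve,x_0]$, $[x_0,x_0+\ve]$, $[x_0+\ve,1]$. On the two inner intervals I would reproduce the argument of Lemma~\ref{L2''} verbatim: Lemma~\ref{Lemma 2.1} (applied to $a$ with $\gamma=K_1$ and to $b$ with $\gamma=K_2$) yields $\ds\frac{1}{ab}\le\frac{C}{|x-x_0|^{K_1+K_2}}\le\frac{C}{|x-x_0|^2}$ there, using $K_1+K_2\le2$ and $|x-x_0|<1$; then, because $u(x_0)=0$, the change of variables $s=|x-x_0|$ together with the classical Hardy inequality gives $\ds\int_{x_0-\ve}^{x_0}\frac{u^2}{|x-x_0|^2}\,dx\le C\int_{x_0-\ve}^{x_0}(u')^2\,dx\le C\int_0^1(u')^2\,dx$, and likewise on $[x_0,x_0+\ve]$.

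The genuinely new step concerns $[0,x_0-\ve]$ (and symmetrically $[x_0+\ve,1]$): here $\tfrac1{ab}$ is bounded, since $a,b$ are continuous and strictly positive on this compact set not containing $x_0$, so $\ds\int_0^{x_0-\ve}\frac{u^2}{ab}\,dx\le\frac{1}{\ds\min_{[0,x_0-\ve]}ab}\int_0^{x_0}u^2\,dx$; and in place of the Poincar\'e inequality for functions vanishing at $0$ used in Lemma~\ref{L2''}, I would invoke the Poincar\'e inequality on $[0,x_0]$ for $u$, which vanishes at the \emph{right} endpoint $x_0$, obtaining $\ds\int_0^{x_0}u^2\,dx\le x_0^2\int_0^{x_0}(u')^2\,dx\le x_0^2\int_0^1(u')^2\,dx$. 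Adding the four estimates yields \eqref{2nondiv} with $\overline C_{HP}$ depending only on $a,b,x_0$ once $\ve$ is fixed, e.g. $\ve=\tfrac12\min\{x_0,1-x_0\}$. I do not foresee any serious difficulty: the whole point is that, with Neumann data, the outer intervals must be controlled through $u(x_0)=0$ (furnished by Lemma~\ref{leso}) rather than through boundary vanishing, and in fact the condition $u'(0)=u'(1)=0$ appearing in the statement plays no role in the estimate.
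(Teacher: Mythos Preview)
Your proof is correct and follows essentially the same decomposition and inner-interval argument as the paper. The only difference is on the outer intervals $[0,x_0-\ve]$ and $[x_0+\ve,1]$: you apply directly a Poincar\'e inequality on $[0,x_0]$ (resp.\ $[x_0,1]$) exploiting $u(x_0)=0$, whereas the paper first obtains the cruder bound $\ds\int_0^1\frac{u^2}{ab}\,dx\le C\|u\|_{H^1(0,1)}^2$ and then invokes the norm equivalence $\|u\|_{H^1}\simeq\|u'\|_{L^2}+|u(x_0)|$ (as in Corollary~\ref{HPN1}.2) to absorb the $L^2$ part. Your route is slightly more direct; the paper's has the minor advantage of isolating the intermediate $H^1$ estimate, which is sometimes useful elsewhere. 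Your closing remark that the Neumann conditions $u'(0)=u'(1)=0$ play no role in the estimate is correct and worth noting.
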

\begin{proof}
By Lemma \ref{leso} we know that, taken $u \in \cK_{a,b}$, $u(x_0) =0$. As in Lemma \ref{L2''}, fix $\ve\in \big(0, \min\{x_0,1-x_0\}\big)$ and write
\[
\int_0^1\frac{u^2}{ab}dx= \left(\int_0^{x_0-\ve}+\int_{x_0-\ve}^{x_0}+\int_{x_0}^{x_0+\ve}+\int_{x_0+\ve}^1\right)\frac{u^2}{ab}dx.
\]
Now, 
\begin{equation}\label{primopezzo}
\int_0^{x_0-\ve}\frac{u^2}{ab}dx \leq \frac{1}{\ds\min_{[0,x_0-\ve]}ab}\int_0^{x_0-\ve}u^2dx.
\end{equation}
A similar estimate holds for $\ds\int_{x_0+\ve}^1\frac{u^2}{ab}dx$.
Moreover, by Lemma \ref{Lemma 2.1}, there exists $C=C(a,b)>0$ such that
\begin{equation}\label{primopezzo2}
\int_{x_0-\ve}^{x_0}\frac{u^2}{ab}dx\le C \int_{x_0-\ve}^{x_0}\frac {u^2}{|x-x_0|^{K_1+K_2}}dx\leq C\int_{x_0-\ve}^{x_0}\frac{u^2}{|x-x_0|^2}dx.
\end{equation}
Being $u(x_0)=0$, the classical Hardy--Poincar\'e inequality implies
\begin{equation}\label{primopezzo3}
\int_{x_0-\ve}^{x_0}\frac{u^2}{ab}dx\le C\int_{x_0-\ve}^{x_0}\frac{u^2}{|x-x_0|^2}dx \le C\int_{x_0-\ve}^{x_0} (u')^2dx,
\end{equation}
for a positive constant $C$.
An analogous estimate holds also in $[x_0, x_0+\ve]$. Hence
\[
\begin{aligned}
\int_0^1\frac{u^2}{ab}dx&\le \frac{2}{\ds\min_{[0,x_0-\ve]}ab}\int_0^1 u^2dx + C\int_{x_0-\ve}^{x_0} (u')^2dx + C\int_{x_0}^{x_0+\ve} (u')^2dx\\
&\le  \frac{2}{\ds\min_{[0,x_0-\ve]}ab}\int_0^1 u^2dx  + C\int_0^1 (u')^2dx\le \overline C_{HP} \|u\|^2_{H^1(0,1)},
\end{aligned}
\]
for a suitable positive constant $\overline C_{HP}$. Proceeding as in Corollary \ref{HPN1}.2 the claim follows immediately taking as $y_0$ the point $x_0$.
\end{proof}

\vspace{0.5cm}

Observe that, as for the Dirichlet case, the previous estimates give Hardy--Poincar\'e inequalities in all situations, namely the {\em (WWD)}, {\em (SSD)}, {\em
(WSD)} and {\em (SWD)} and Lemma \ref{L2'''} allows us to consider for the (SSD) case only the situation when $K_1= K_2=1$.

In the rest of the paper we will denote by $\mathcal C_{HP}$ one of the Hardy--Poincar\'{e} constants that appear in Proposition \ref{HP}, \ref{HPN}, Corollary \ref{HPN1} or in Lemmas \ref{L2}, \ref{L2''}, \ref{L2'} and \ref{L2'''}.

\section{Well- posedness}\label{sec3}

In order to study well-posedness of problem \eqref{linear} and in view of Lemmas \ref{L2}, \ref{L2''}, \ref{L2'} and \ref{L2'''}, we introduce the space
\[
\cK := \begin{cases} \cK_a, & \text{if Hypothesis $\ref{Ass03_new}.1$ is satisfied},\\
\cK_{a,b}, & \text{if Hypothesis $\ref{Ass03_new}.2$, $\ref{Ass03_new}.3$ or $\ref{Ass03_new}.4$ is in force},
\end{cases}
\]
where the Hardy--Poincar\'e inequality \eqref{1}, \eqref{1nondiv} or \eqref{2nondiv} holds.
\begin{Remark}\label{remultimo}
Obseve that, thanks to Lemma  \ref{L2} or \ref{L2'},  if $K_1 + K_2 <1$ the spaces $\mathcal K_a$ and $\mathcal K_{a,b}$ coincide and the two norms are equivalent in both (Dbc) or (Nbc).
\end{Remark}
\begin{Remark}\label{rem1}
If the assumptions of Lemma \ref{L2}, \ref{L2''} or \ref{L2'''} are satisfied,
then the standard norm $\|\cdot\|_{\cK}$ is equivalent to
\[
\|u\|_1^2:= \int_0^1 (u'(x))^2 dx
\]
for all $u \in \cK$. Indeed, if \eqref{1} or \eqref{2nondiv} holds, for all $u \in
\cK$, we have
\[
\int_0^1\frac{u^2}{a} dx = \int_0^1 b \frac{u^2}{ab}dx \le c \int_0^1  (u')^2
dx,
\]
for a positive constant $c$, and this is enough to conclude.
\vspace{0.3cm}
Analogously, one can prove that if \eqref{1nondiv} holds,
then the standard norm $\|\cdot\|_{\cK}$ is equivalent to
$
\|u\|_{H^1(0,1)}
$
for all $u \in \cK$.
\end{Remark}

In particular, setting $C^*$ the best constant of \eqref{1}, \eqref{1nondiv} or \eqref{2nondiv} in ${\cal K}$, one has the next result:

\begin{Corollary}\label{equi}
Assume that one among Hypothesis $\ref{Ass03_new}.2$, $\ref{Ass03_new}.3$ or $\ref{Ass03_new}.4$ is satisfied. If $(Nbc)$ hold, then for all $u \in \cK_{a,b}$  we have
\[
\ds \frac{1}{1+C^* + \max_{[0,1]}bC^*}\|u\|_{\cK_{a,b}(0,1)}^2 \le \|u'\|_{L^2(0,1)}^2 \le\max\{1,\max_{[0,1]}a \} \|u\|^2_{\cK_{a,b}(0,1)}.
\]
\end{Corollary}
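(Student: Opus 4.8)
The plan is to read off both inequalities directly from the Hardy--Poincar\'e estimate \eqref{2nondiv} of Lemma \ref{L2'''}, together with the fact that under any of Hypotheses $\ref{Ass03_new}.2$, $\ref{Ass03_new}.3$, $\ref{Ass03_new}.4$ the coefficients $a$ and $b$ belong to $W^{1,1}(0,1)\subset C([0,1])$, so that $\max_{[0,1]}a$ and $\max_{[0,1]}b$ are finite. Recall that $C^*$ denotes the best constant in \eqref{2nondiv}, i.e. $\int_0^1\frac{u^2}{ab}dx\le C^*\int_0^1(u')^2dx$ for $u\in\cK_{a,b}$ (with $u'(0)=u'(1)=0$), and that, by definition,
\[
\|u\|_{\cK_{a,b}(0,1)}^2=\int_0^1\frac{u^2}{a}dx+\int_0^1(u')^2dx+\int_0^1\frac{u^2}{ab}dx .
\]

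For the left-hand inequality I would bound the three terms of $\|u\|_{\cK_{a,b}(0,1)}^2$ one by one in terms of $\|u'\|_{L^2(0,1)}^2$. The middle term equals $\|u'\|_{L^2(0,1)}^2$; the last term is at most $C^*\|u'\|_{L^2(0,1)}^2$ by \eqref{2nondiv}; and for the first term one writes $\frac{u^2}{a}=b\cdot\frac{u^2}{ab}$, whence
\[
\int_0^1\frac{u^2}{a}dx\le\Big(\max_{[0,1]}b\Big)\int_0^1\frac{u^2}{ab}dx\le\Big(\max_{[0,1]}b\Big)C^*\int_0^1(u')^2dx .
\]
Adding the three estimates gives $\|u\|_{\cK_{a,b}(0,1)}^2\le\big(1+C^*+(\max_{[0,1]}b)\,C^*\big)\|u'\|_{L^2(0,1)}^2$, which is the left-hand inequality after dividing. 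For the right-hand inequality it is enough to note that the three terms defining $\|u\|_{\cK_{a,b}(0,1)}^2$ are all nonnegative, so $\|u'\|_{L^2(0,1)}^2=\int_0^1(u')^2dx\le\|u\|_{\cK_{a,b}(0,1)}^2\le\max\{1,\max_{[0,1]}a\}\,\|u\|_{\cK_{a,b}(0,1)}^2$, the last step because $\max\{1,\max_{[0,1]}a\}\ge1$.

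There is no genuine obstacle here: the statement is essentially a bookkeeping consequence of \eqref{2nondiv}, recording explicit equivalence constants between $\|\cdot\|_{\cK_{a,b}(0,1)}$ and $\|(\cdot)'\|_{L^2(0,1)}$ in the spirit of Remark \ref{rem1}. The only points needing a little attention are to invoke the Hardy--Poincar\'e inequality valid in the present (Nbc), $\cK_{a,b}$ setting --- namely \eqref{2nondiv}, which holds precisely because one of Hypotheses $\ref{Ass03_new}.2$--$\ref{Ass03_new}.4$ is assumed --- and to use the continuity, hence boundedness on $[0,1]$, of $a$ and $b$, which guarantees that the constants appearing in the statement are finite.
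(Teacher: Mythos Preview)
Your proof is correct and the left-hand inequality follows exactly the paper's argument: bound $\int_0^1 u^2/a\,dx$ by $(\max_{[0,1]}b)\int_0^1 u^2/(ab)\,dx$ and then apply \eqref{2nondiv}, and sum the three pieces. For the right-hand inequality you take a slightly more direct route than the paper: since $\int_0^1(u')^2\,dx$ is one of the nonnegative summands defining $\|u\|_{\cK_{a,b}}^2$, you get $\|u'\|_{L^2}^2\le\|u\|_{\cK_{a,b}}^2$ immediately and then multiply by $\max\{1,\max_{[0,1]}a\}\ge1$ to match the stated constant. The paper instead passes through $\|u\|_{H^1}^2$ and uses $\int_0^1 u^2\,dx\le(\max_{[0,1]}a)\int_0^1 u^2/a\,dx$, which explains why the constant $\max\{1,\max_{[0,1]}a\}$ appears; your observation shows this constant is in fact not needed (one could take it to be $1$), so your argument is marginally sharper while remaining entirely consistent with the paper.
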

\begin{proof}
Take $u \in \cK_{a,b}$ with $u'(0)=u'(1)=0$. By Lemma \ref{L2'''},
\[
\begin{aligned}
\int_0^1\frac{u^2}{a}dx&\le \max_{[0,1]}b \int_0^1\frac{u^2}{ab} dx \le \max_{[0,1]}bC^*\int_0^1 (u')^2(x) dx.
\end{aligned}
\]
Thus,
\[
\|u\|^2_{\cK_{a,b}(0,1)}= \int_0^1\frac{u^2}{a}dx +\int_0^1\frac{u^2}{ab}dx +  \int_0^1 (u')^2(x) dx \le (1+C^*+ \max_{[0,1]}bC^*) \|u'\|^2_{L^2(0,1)}.
\]
On the other hand, 
\begin{equation}\label{bo}
\begin{aligned}
\|u'\|^2_{L^2(0,1)}& \le \|u\|_{H^1(0,1)}^2
= \int_0^1 u^2 dx+ \int_0^1(u')^2dx \le \max_{[0,1]}a \int_0^1 \frac{u^2}{a} dx+ \int_0^1(u')^2dx  \\
&\le \max\{1,\max_{[0,1]}a \} \|u\|^2_{\cK_{a,b}(0,1)}.
\end{aligned}
\end{equation}
Thus, the claim follows.
\end{proof}

\vspace{0.5cm}

From now on, we make the following assumptions on $a$, $b$ and $\lambda$:
\begin{Assumptions}\label{Ass03}
\begin{enumerate}
\item 
Hypothesis $\ref{Ass03_new}$ holds;
\item
$\lambda \neq 0$ and $\lambda < \ds \frac{1}{C^*}$, if (Dbc) hold, or
\begin{equation}\label{lambda}
\lambda
< \begin{cases}
0, & \text{if Hypothesis $\ref{Ass03_new}.1$ holds}\\
\displaystyle \frac{1}{C^*}, & \text{otherwise},
\end{cases}
\end{equation}
if (Nbc) are in force.
\end{enumerate}
\end{Assumptions}
Observe that the assumption $\lambda \neq 0$ is not restrictive since the case $\lambda =0$ is considered in \cite{bfm1} and in \cite{fm1}.

Using the lemmas given in the previous section one can prove the next inequalities, which are crucial to prove well-posedness.
\begin{Proposition}\label{eq}
Assume that Hypothesis $\ref{Ass03}$ and $(Dbc)$ are satisfied. Then there exists
$\Lambda>0$ such that for all $u \in \cK$
\[
\int_0^1 (u'(x))^2 dx - \lambda \int_0^1 \frac{u^2(x)}{a(x)b(x)} dx
\ge \Lambda \|u\|^2_{\cK}.
\]
\end{Proposition}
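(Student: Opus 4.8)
The plan is to obtain the coercivity estimate directly from the Hardy--Poincar\'e inequality \eqref{1} together with the norm equivalence recorded in Remark \ref{rem1}, treating the two possible signs of $\lambda$ separately.

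First I would note that, since Hypothesis \ref{Ass03}.1 is exactly Hypothesis \ref{Ass03_new}, we are in the setting of Lemma \ref{L2} (when Hypothesis \ref{Ass03_new}.1 holds, in which case $\cK=\cK_a$) or of Lemma \ref{L2''} (when one among Hypotheses \ref{Ass03_new}.2, \ref{Ass03_new}.3, \ref{Ass03_new}.4 holds, in which case $\cK=\cK_{a,b}$). In either situation \eqref{1} is valid on the whole of $\cK$, and since $C^*$ is by definition the \emph{best} constant in \eqref{1}, we have $\int_0^1 \frac{u^2}{ab}\,dx \le C^*\int_0^1 (u')^2\,dx$ for every $u\in\cK$. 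Moreover, by Remark \ref{rem1} the norms $\|\cdot\|_{\cK}$ and $\|\cdot\|_1$ are equivalent on $\cK$, so there is $c_0>0$ with $c_0\|u\|_{\cK}^2 \le \int_0^1 (u')^2\,dx$ for all $u\in\cK$.

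Next I would split according to the sign of $\lambda$ (recall that in the $(Dbc)$ case Hypothesis \ref{Ass03}.2 only requires $\lambda\neq 0$ and $\lambda<1/C^*$, so $\lambda$ may be negative or positive). If $\lambda<0$, then $-\lambda\int_0^1\frac{u^2}{ab}\,dx\ge 0$, hence $\int_0^1(u')^2\,dx-\lambda\int_0^1\frac{u^2}{ab}\,dx\ge \int_0^1(u')^2\,dx$. If $0<\lambda<1/C^*$, then \eqref{1} gives $\lambda\int_0^1\frac{u^2}{ab}\,dx\le \lambda C^*\int_0^1(u')^2\,dx$, hence $\int_0^1(u')^2\,dx-\lambda\int_0^1\frac{u^2}{ab}\,dx\ge (1-\lambda C^*)\int_0^1(u')^2\,dx$ with $1-\lambda C^*>0$. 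Setting $\mu:=1-\max\{\lambda C^*,0\}\in(0,1]$, in both cases we obtain $\int_0^1(u')^2\,dx-\lambda\int_0^1\frac{u^2}{ab}\,dx\ge \mu\int_0^1(u')^2\,dx\ge \mu c_0\|u\|_{\cK}^2$, so the statement follows with $\Lambda:=\mu c_0>0$.

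I do not expect a genuine obstacle here: once \eqref{1} is invoked with the optimal constant, the estimate is immediate. The only points deserving care are to make sure \eqref{1} is applied on the correct space ($\cK_a$ under Hypothesis \ref{Ass03_new}.1 and $\cK_{a,b}$ otherwise), which is exactly what Lemmas \ref{L2} and \ref{L2''} supply, and that the threshold $1/C^*$ in Hypothesis \ref{Ass03}.2 refers to this very best constant, so that the coefficient $1-\lambda C^*$ is strictly positive when $\lambda>0$.
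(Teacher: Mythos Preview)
Your proposal is correct and follows essentially the same approach as the paper's own proof: split on the sign of $\lambda$, use the Hardy--Poincar\'e inequality \eqref{1} (via Lemmas \ref{L2} or \ref{L2''}) to absorb the singular term when $\lambda>0$, and conclude via the norm equivalence of Remark \ref{rem1}. The only cosmetic difference is that you package both cases into a single constant $\mu=1-\max\{\lambda C^*,0\}$, whereas the paper treats them separately.
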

\begin{proof} If $\lambda < 0$, the result is obvious by Remark \ref{rem1}. Now, assume that $\lambda \in
\ds \left(0,\ds \frac{1}{C^*}\right)$. Then, by
\eqref{1} and Remark \ref{rem1},
\[
\begin{aligned}
&\int_0^1 (u'(x))^2 dx - \lambda \int_0^1 \frac{u^2(x)}{a(x)b(x)} dx
\\& \ge \int_0^1 (u'(x))^2 dx - \lambda C^*\int_0^1 (u'(x))^2
dx
\\& =  (1- \lambda C^*)\int_0^1 (u'(x))^2 dx \ge
\Lambda\|u\|^2_{\cK},
\end{aligned}
\]
for a positive constant $\Lambda$.
\end{proof}

\begin{Proposition}\label{eqN}
Assume that  Hypothesis $\ref{Ass03}$ and $(Nbc)$ are satisfied. 
\begin{enumerate}
\item  If  Hypothesis $\ref{Ass03_new}.2$, $\ref{Ass03_new}.3$ or $\ref{Ass03_new}.4$ is satisfied and  $\lambda < \ds\frac{1}{C^*}$, then there exists $\Lambda >0$ such that for all $u \in \cK_{a,b}$
\[
\int_0^1 (u'(x))^2 dx - \lambda \int_0^1 \frac{u^2(x)}{a(x)b(x)} dx
\ge \Lambda \|u\|^2_{\cK_{a,b}}.
\]
\item If Hypothesis $\ref{Ass03_new}.1$  is satisfied and $\lambda  < 0$, then there exists $\Lambda >0$ such that for all $u \in \cK_{a,b}$
\[
\int_0^1 (u'(x))^2 dx - \lambda \int_0^1 \frac{u^2(x)}{a(x)b(x)} dx
\ge \Lambda \|u\|^2_{\cK_{a,b}}.
\]
\end{enumerate}
\end{Proposition}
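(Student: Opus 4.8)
The plan is to follow the pattern of the proof of Proposition \ref{eq}, splitting according to the sign of $\lambda$, but replacing the Dirichlet Hardy--Poincar\'e estimates of Subsection \ref{HPDBC} with their Neumann counterparts of Subsection \ref{HPNBC}, namely Lemma \ref{L2'''} and the norm equivalence of Corollary \ref{equi}.

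\emph{Part 1.} Under Hypothesis $\ref{Ass03_new}.2$, $\ref{Ass03_new}.3$ or $\ref{Ass03_new}.4$, Lemma \ref{L2'''} gives \eqref{2nondiv}, i.e. $\int_0^1 u^2/(ab)\,dx\le C^*\int_0^1(u')^2\,dx$ for all $u\in\cK_{a,b}$, while Corollary \ref{equi} furnishes a constant $c_0:=\big(1+C^*+\max_{[0,1]}b\,C^*\big)^{-1}>0$ with $c_0\|u\|^2_{\cK_{a,b}}\le\|u'\|^2_{L^2(0,1)}$. If $\lambda\le 0$, then $-\lambda\int_0^1 u^2/(ab)\,dx\ge 0$, so the left-hand side of the claimed inequality is $\ge\int_0^1(u')^2\,dx\ge c_0\|u\|^2_{\cK_{a,b}}$. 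If $0<\lambda<1/C^*$, I first apply \eqref{2nondiv} to obtain $\int_0^1(u')^2\,dx-\lambda\int_0^1 u^2/(ab)\,dx\ge(1-\lambda C^*)\int_0^1(u')^2\,dx$, and then Corollary \ref{equi} to bound this below by $(1-\lambda C^*)c_0\|u\|^2_{\cK_{a,b}}$; since $1-\lambda C^*>0$, the conclusion holds with $\Lambda:=c_0\min\{1,1-\lambda C^*\}$.

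\emph{Part 2.} Now $K_1+K_2<1$, so $\cK_{a,b}=\cK_a$ with equivalent norms by Remark \ref{remultimo}. The point is that under (Nbc) and Hypothesis $\ref{Ass03_new}.1$ the seminorm $\|u'\|_{L^2(0,1)}$ is \emph{not} equivalent to $\|u\|_{\cK_{a,b}}$ (it annihilates constants), so merely discarding the singular term does not suffice and the hypothesis $\lambda<0$ must be used. Writing $-\lambda=|\lambda|>0$ and observing that $b$ is continuous, hence bounded, on $[0,1]$, one has $\int_0^1 u^2/a\,dx=\int_0^1\frac{b}{ab}u^2\,dx\le(\max_{[0,1]}b)\int_0^1 u^2/(ab)\,dx$, whence
\[
\|u\|^2_{\cK_{a,b}}=\int_0^1\frac{u^2}{a}\,dx+\int_0^1(u')^2\,dx+\int_0^1\frac{u^2}{ab}\,dx\le\big(1+\max_{[0,1]}b\big)\int_0^1\frac{u^2}{ab}\,dx+\int_0^1(u')^2\,dx.
\]
Consequently $\int_0^1(u')^2\,dx+|\lambda|\int_0^1 u^2/(ab)\,dx\ge\Lambda\|u\|^2_{\cK_{a,b}}$ with $\Lambda:=\min\big\{1,\,|\lambda|/(1+\max_{[0,1]}b)\big\}>0$, which is exactly the assertion.

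\emph{Main obstacle.} The computations are elementary; the only genuine subtlety is the one flagged in Part 2, where the bare gradient term fails to control the full $\cK_{a,b}$-norm, so that $\lambda<0$ is essential and one must retain the $1/(ab)$-term, exploiting the crude bound $1/a\le(\max_{[0,1]}b)/(ab)$ to recover $\int u^2/a$. A minor caveat, already present in Lemma \ref{L2'''} and Corollary \ref{equi}, is that these inequalities are proved first for $u\in\cK_{a,b}$ with $u'(0)=u'(1)=0$ and then used for all of $\cK_{a,b}$.
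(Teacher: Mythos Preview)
Your proof is correct and follows essentially the same approach as the paper. Part~1 is identical: split on the sign of $\lambda$, use Lemma~\ref{L2'''} to absorb the singular term when $\lambda>0$, and then Corollary~\ref{equi} to pass from $\|u'\|_{L^2}^2$ to the full $\cK_{a,b}$-norm. For Part~2 you use the same key observation $\int u^2/a\le(\max_{[0,1]}b)\int u^2/(ab)$, but with a slightly more direct bookkeeping: you bound $\|u\|^2_{\cK_{a,b}}$ directly by $(1+\max b)\int u^2/(ab)+\int(u')^2$, whereas the paper first bounds the $\cK_a$-norm and then invokes the norm equivalence of Remark~\ref{remultimo}; your variant avoids that extra step and yields an explicit constant. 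The caveat you flag about $u'(0)=u'(1)=0$ in the statements of Lemma~\ref{L2'''} and Corollary~\ref{equi} is real and is present in the paper's own proof as well; in fact those boundary conditions are not used in the proofs of either result (the decisive input is $u(x_0)=0$ from Lemma~\ref{leso}), so the inequalities hold on all of $\cK_{a,b}$.
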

\begin{proof}
{\it 1.:} Assume that Hypothesis $\ref{Ass03_new}.2$, $\ref{Ass03_new}.3$ or $\ref{Ass03_new}.4$ is satisfied and take $u \in \cK_{a,b}$.
If $\lambda < 0$, then, by Corollary \ref{equi}, we have
\[
\int_0^1 (u'(x))^2 dx - \lambda \int_0^1 \frac{u^2(x)}{a(x)b(x)} dx \ge \|u'\|^2_{L^2(0,1)}\ge \ds \frac{1}{1+ C^*+\max_{[0,1]}bC^*} \|u\|^2_{\cK_{a,b}}.
\]
Now, assume that $\lambda \in
\left(0, \ds \frac{1}{C^*}\right)$. By Lemma \ref{L2'''} and Corollary \ref{equi}:
\[
\begin{aligned}
&\int_0^1 (u'(x))^2 dx - \lambda \int_0^1 \frac{u^2(x)}{a(x)b(x)} dx\ge (1-\lambda C^*) \|u'\|^2_{L^2(0,1)}\\
&\ge \frac{1-\lambda C^*}{1+ C^*+\max_{[0,1]}bC^*}\|u\|^2_{\cK_{a,b}}\ge \Lambda\|u\|^2_{\cK_{a,b}},
\end{aligned}
\]
for a positive constant $\Lambda$.
\\
{\it 2.:}
Assume now that Hypothesis $\ref{Ass03_new}.1$ is satisfied and take $u \in \cK_{a,b}$. Recall that, by Remark \ref{remultimo}, $\mathcal K_a$ and $\mathcal K_{a,b}$ coincide and the two norms are equivalent. Clearly
\[
\int_0^1 \frac{u^2}{a} dx \le \max_{[0,1]}b\int_0^1\frac{u^2}{ab}dx .
\]
Being $\lambda <0$, one has
\[
- \frac{\lambda}{\max_{[0,1]}b}\int_0^1 \frac{u^2}{a} dx \le -\lambda \int_0^1\frac{u^2}{ab}dx .
\]
Hence, we get
\[
\begin{aligned}
\int_0^1 (u'(x))^2 dx - \lambda \int_0^1 \frac{u^2(x)}{a(x)b(x)} dx
&\ge  \|u'\|^2_{L^2(0,1)}- \frac{\lambda}{\max_{[0,1]}b}\int_0^1 \frac{u^2(x)}{a(x)} dx\\
&\ge\min\left\{1,- \frac{\lambda}{\max_{[0,1]}b} \right\}\|u\|^2_{\cK_a}.
\end{aligned}
\]
The thesis follows by Remark \ref{remultimo}.
\end{proof}
\begin{Remark}\label{generale}
Observe that all the previous results hold if we substitute  $(0,1)$ with  a general interval $(A,B)$ such that $x_0 \in (A,B)$.
\end{Remark}

\vspace{0.2cm}

We recall the following definition:
\begin{Definition}
Let $u_0 \in L^2_{\frac{1}{a}}(0,1)$ and $h\in
L^2_{\frac{1}{a}}(Q_T)$. A function $u$ is said to be a weak
solution of \eqref{linear} if
\[u \in C([0, T]; L^2_{\frac{1}{a}}(0,1)) \cap L^2(0, T;
\cK)\] and satisfies
\[
\begin{aligned}
&\int_0^1 \frac{ u(T,x)\varphi(T,x)}{a(x)} dx - \int_0^1
\frac{u_0(x) \varphi(0,x)}{a(x)} dx - \int_{Q_T}
\frac{\varphi_t (t,x)u(t,x)}{a(x)}dxdt =
\\&- \int_{Q_T} u_x(t,x)
\varphi_x(t,x) dxdt+\lambda\int_{Q_T} \frac{  u(t,x) \varphi(t,x)}{a(x)b(x)} dxdt\\
& + \int_{Q_T} h(t,x)\chi_\omega(x) \frac{\varphi(t,x)
}{a(x)}dx dt
\end{aligned}
\]
for all $\varphi \in H^1(0, T; L^2_{\frac{1}{a}}(0,1)) \cap L^2(0,
T; \cK)$.
\end{Definition}
Finally, we introduce the Hilbert spaces

\[
\cW:= \begin{cases}
H^2_{\frac{1}{a},\frac{1}{b}}(0,1) :=\Big\{ u \in H^1_{\frac{1}{a}}(0,1) \big| \;u' \in H^1(0,1) \text{ and }\; A_1u \in
L^2_{\frac{1}{a}}(0,1)\Big\}, & \text{if (Dbc) hold},\\
\cH^2_{\frac{1}{a},\frac{1}{b}}(0,1) :=\Big\{ u \in \cH^1_{\frac{1}{a}}(0,1) \big| \;u' \in H^1(0,1) \text{ and }\; A_2u \in
L^2_{\frac{1}{a}}(0,1)\Big\}, & \text{if (Nbc) are in force},
\end{cases}
\]
where $A_iu:=au''  + \displaystyle
\frac{\lambda}{b}u$, $i=1,2$, with
\[D(A_1)
=H^2_{\frac{1}{a}, \frac{1}{b}}, \quad \text {if (Dbc) hold},
\]
and
\[D(A_2) =\left\{u \in \cH^2_{\frac{1}{a},\frac{1}{b}}:
u'(0)=u'(1)=0\right\}, \quad \text {if (Nbc) are in force}.
\]

\begin{Remark}
Observe that if $u \in D(A_i)$, $i=1,2$, then $\ds\frac{u}{b}$ and $\ds\frac{u}{\sqrt{b}}\in L^2_{\frac{1}{a}}(0,1)$, so that $u \in \cK_{a,b}$ and \eqref{1}, \eqref{1nondiv} or \eqref{2nondiv} holds if Hypothesis $\ref{Ass03_new}$ is satisfied.
\end{Remark}

As in \cite[Lemma 2.2]{fm1}, one can prove the following
formula of integration by parts which is a crucial tool for the rest of the paper:
\begin{Lemma}\label{green} Assume that one among the Hypothesis $\ref{Ass0}$, $\ref{Ass01}$, $\ref{Ass0_1}$ or $\ref{Ass01_1}$ is satisfied. Then, for all $(u,v)\in D(A_i)\times \cK$, $i=1,2$,
 one has
\begin{equation}\label{greenformula}
\int_0^1u'' v dx= - \int_0^1 u'v' dx.
\end{equation}
\end{Lemma}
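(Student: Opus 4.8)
The plan is to establish \eqref{greenformula} first on the dense subspace $H^1_c(0,1)$ of functions supported away from $x_0$, and then to pass to the limit using the density result of Lemma \ref{densita} together with the structural information on $D(A_i)$. So the first step is to fix $(u,v)\in D(A_i)\times\cK$ and take a sequence $(v_n)\subset H^1_c(0,1)$ with $v_n\to v$ in $\cK$ (in the case $\cK=\cK_{a,b}$ this is exactly Lemma \ref{densita}; in the case $\cK=\cK_a$, which by Remark \ref{remultimo} coincides with $\cK_{a,b}$ when $K_1+K_2<1$, the same density holds). Since each $v_n$ has compact support in $(0,x_0)\cup(x_0,1)$ and $u\in H^1_{\frac1a}(0,1)$ with $u'\in H^1(0,1)$, on each of the two intervals $u''$ is a genuine $L^2$ function and $u'$ is absolutely continuous, so the classical integration by parts applies on $\supp v_n$ with no boundary terms:
\[
\int_0^1 u'' v_n\,dx = -\int_0^1 u'v_n'\,dx.
\]
In the Neumann case one must additionally check that no boundary terms at $x=0,1$ appear even before localizing; but here $v_n$ vanishes near $0$ and $1$, so this is automatic, and the conditions $u'(0)=u'(1)=0$ built into $D(A_2)$ are what will make the limit work.

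The second step is the passage to the limit in each side. For the right-hand side, $\int_0^1 u'v_n'\,dx\to\int_0^1 u'v'\,dx$ because $v_n'\to v'$ in $L^2(0,1)$ (convergence in $\cK$ controls the $L^2$ norm of the derivative) and $u'\in H^1(0,1)\subset L^2(0,1)$. For the left-hand side one writes $\int_0^1 u''v_n\,dx$ and wants $\to\int_0^1 u''v\,dx$. Here the point is that $u''$ is not in $L^2_{\frac1a}$-type duality with $v$ directly through the plain $L^2$ pairing, so one rewrites using $A_iu=au''+\frac\lambda b u\in L^2_{\frac1a}(0,1)$: namely $\int_0^1 u''\,v_n\,dx=\int_0^1\frac{(au'')v_n}{a}\,dx=\int_0^1\frac{(A_iu)v_n}{a}\,dx-\lambda\int_0^1\frac{u\,v_n}{ab}\,dx$. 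Now $A_iu/\sqrt a\cdot\sqrt a$-type reasoning gives: $\frac{A_iu}{\sqrt a}\in L^2$, and $\frac{v_n}{\sqrt a}\to\frac{v}{\sqrt a}$ in $L^2$ since convergence in $\cK$ controls $\int\frac{(v_n-v)^2}{a}$ (using Remark \ref{rem1}, the $\cK$-norm dominates $\int\frac{u^2}{a}$ via the Hardy--Poincar\'e inequality); and $\frac{v_n}{\sqrt{ab}}\to\frac{v}{\sqrt{ab}}$ in $L^2$ by the very definition of the $\cK_{a,b}$-norm. Hence both integrals on the right converge, giving $\int_0^1 u''v_n\,dx\to\int_0^1\frac{(A_iu)v}{a}\,dx-\lambda\int_0^1\frac{uv}{ab}\,dx=\int_0^1 u''v\,dx$, the last equality being the same rewriting read backwards (legitimate since all the pieces are now finite). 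Combining the two limits yields \eqref{greenformula}.

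The main obstacle is the left-hand side limit: one cannot pair $u''$ with $v$ in plain $L^2$, because near $x_0$ the function $u''$ may blow up (like $|x-x_0|^{K_1-2}$ for the prototype) and $v$ need not decay fast enough in the unweighted sense — it is only the \emph{weighted} quantities $\frac{A_iu}{\sqrt a}$, $\frac{v}{\sqrt a}$, $\frac{v}{\sqrt{ab}}$ that live in $L^2$. So the crux is precisely the algebraic identity $u''=\frac1a\big(A_iu-\frac\lambda b u\big)$, which redistributes the singular weight $a$ onto $v$, combined with the fact (Lemma \ref{leso}) that $u(x_0)=v(x_0)=0$ so that the product $\frac{uv}{ab}$ is integrable despite $\frac1{ab}\notin L^1$. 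Everything else — the absence of boundary terms, the classical integration by parts on compact subsets of $(0,x_0)\cup(x_0,1)$, and the $L^2$-convergences — is routine once the density statement of Lemma \ref{densita} and the norm equivalences of Remark \ref{rem1} and Corollary \ref{equi} are in hand. This is exactly the argument of \cite[Lemma 2.2]{fm1}, transcribed to the present weighted setting.
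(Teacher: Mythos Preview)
Your approach is substantially more complicated than necessary and has a genuine gap in the Neumann case.

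\textbf{The gap.} Lemma \ref{densita} states that $H^1_c(0,1)$ is dense in the \emph{Dirichlet} version of $\cK_{a,b}$, i.e., in $\{u\in H^1_{\frac1a}(0,1):\frac{u}{\sqrt{ab}}\in L^2\}$ with $H^1_{\frac1a}=L^2_{\frac1a}\cap H^1_0$. In the Neumann case, $v\in\cK$ need not vanish at $0$ or $1$, so no sequence in $H^1_c(0,1)\subset H^1_0(0,1)$ can converge to $v$ in any norm dominating $\|\cdot\|_{H^1}$ (by the embedding $H^1\hookrightarrow C$). Your remark that ``the conditions $u'(0)=u'(1)=0$ built into $D(A_2)$ are what will make the limit work'' does not repair this: the limit never reaches $v$. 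Moreover, Lemma \ref{densita} requires Hypothesis \ref{Assnew}, which is not implied by Hypotheses \ref{Ass0}--\ref{Ass01_1} alone (e.g., in the (WWD) case with $K_1+K_2<1$ the extra condition \eqref{iponew} is not assumed).

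\textbf{The simpler route.} The membership $u\in D(A_i)$ already forces $u'\in H^1(0,1)$ \emph{globally}; hence $u''\in L^2(0,1)$ and $u'\in C([0,1])$. Since also $v\in\cK\subset H^1(0,1)\subset C([0,1])$, the classical product rule gives $(u'v)'=u''v+u'v'$ in $L^1(0,1)$, so
\[
\int_0^1 u''v\,dx = \big[u'v\big]_0^1 - \int_0^1 u'v'\,dx,
\]
and the boundary term vanishes because $v(0)=v(1)=0$ in the Dirichlet case and $u'(0)=u'(1)=0$ in the Neumann case. There is no singularity issue at $x_0$ to negotiate: your concern that ``near $x_0$ the function $u''$ may blow up'' is unfounded here, since $u''\in L^2(0,1)$ by definition of the domain. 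The rewriting $u''=\frac1a(A_iu-\frac\lambda b u)$ and the appeal to Lemma \ref{leso} are therefore unnecessary. This direct argument is what the reference to \cite[Lemma 2.2]{fm1} points to.
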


The following existence result holds:
\begin{Theorem}\label{theorem1}
Assume Hypothesis $\ref{Ass03}$.
For
all $h \in  L^2_{\frac{1}{a}}(Q_T)$ and $u_0 \in L^2_{\frac{1}{a}}(0,1)$, there exists a unique
weak solution  $u$ of \eqref{linear}. In particular, the operators $A_i: D(A_i) \to
L^2_{\frac{1}{a}}(0,1)$, $i=1,2$, are non positive and self-adjoint in $L^2_{\frac{1}{a}}(0,1)$ and generate two analytic contraction
semigroups of angle $\pi/2$. Moreover, if $u_0 \in D(A_i)$ and $h\in W^{1,1}(0,T;L^2_{\frac{1}{a}}(0,1))$, then
\[
u \in C^1(0,T; L^2_{\frac{1}{a}}(0,1)) \cap C([0,T];D(A_i)).
\]
\end{Theorem}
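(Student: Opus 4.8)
The plan is to reduce everything to abstract semigroup theory via the Lax--Milgram/variational framework, using the coercivity estimates of Propositions \ref{eq} and \ref{eqN} as the crucial ingredient. First I would introduce the bilinear form associated with $-A_i$, namely
\[
a(u,v):=\int_0^1 u'v'\,dx-\lambda\int_0^1\frac{uv}{ab}\,dx\qquad\text{for }u,v\in\cK,
\]
and check that it is continuous on $\cK\times\cK$ (using the Hardy--Poincar\'e inequalities \eqref{1}, \eqref{1nondiv}, \eqref{2nondiv} to bound the singular term by $\|u\|_\cK\|v\|_\cK$) and coercive on $\cK$: this is exactly the content of Propositions \ref{eq} and \ref{eqN}, which give $a(u,u)\ge\Lambda\|u\|_\cK^2$ under Hypothesis \ref{Ass03}. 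Since $\cK\hookrightarrow L^2_{\frac1a}(0,1)$ densely and continuously, the triple $(\cK,L^2_{\frac1a}(0,1),\cK^*)$ is a Hilbert triple, and by the standard theory of coercive symmetric forms (e.g. Lions--Magenes, or Brezis) the operator $A_i$ defined by the form is self-adjoint, $\le 0$ on $L^2_{\frac1a}(0,1)$, with dense domain.

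Second, I would identify the form domain's operator with the concrete operator $A_iu=au''+\frac\lambda b u$ on $D(A_i)=\cW$ (with the Neumann condition $u'(0)=u'(1)=0$ in the case $i=2$). The key tool here is the integration-by-parts formula of Lemma \ref{green}, which gives $\int_0^1 u''v\,dx=-\int_0^1 u'v'\,dx$ for $(u,v)\in D(A_i)\times\cK$; dividing the equation $A_iu=f$ by $a$ and pairing with $v/a$ in $L^2_{\frac1a}$ (so that $\int_0^1\frac{au''}{a}v\,dx=\int_0^1 u''v\,dx$) shows that $u\in D(A_i)$ with $A_iu=f$ is equivalent to $a(u,v)=-\langle f,v\rangle_{L^2_{\frac1a}}$ for all $v\in\cK$. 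One must also verify the reverse inclusion, i.e. that every solution of the variational problem actually has $u'\in H^1(0,1)$ and $A_iu\in L^2_{\frac1a}(0,1)$: this is an elliptic-regularity step, recovering $u''=\frac1a(f-\frac\lambda b u)$ in the distributional sense and noting the right-hand side lies in $L^1_{loc}$ away from $x_0$ and, globally, $\frac{f}{a}\cdot a+\lambda\frac u b\in L^2_{\frac1a}$ after multiplying back by $a$; the boundary conditions (Dirichlet membership in $H^1_0$, or the Neumann trace condition) are encoded by the choice of $\cK$ and the boundary terms in the integration by parts.

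Third, once $A_i$ is self-adjoint and nonpositive, the spectral theorem immediately yields that $A_i$ generates a contraction semigroup $(e^{tA_i})_{t\ge0}$ which is analytic of angle $\pi/2$ (a self-adjoint dissipative operator has spectrum in $(-\infty,0]$ and the resolvent estimate holds in any sector of half-angle $<\pi/2$, in fact one recovers the optimal angle $\pi/2$). Existence and uniqueness of the weak solution of \eqref{linear} then follows from the standard representation $u(t)=e^{tA_i}u_0+\int_0^t e^{(t-s)A_i}\big(h(s)\chi_\omega\big)\,ds$ for $h\in L^2_{\frac1a}(Q_T)$, together with the classical regularity $u\in C([0,T];L^2_{\frac1a})\cap L^2(0,T;\cK)$ coming from the analytic-semigroup/energy estimates; one checks this $u$ satisfies the weak formulation by testing with $\varphi$ and integrating by parts in $t$. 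Finally, the additional regularity $u\in C^1(0,T;L^2_{\frac1a})\cap C([0,T];D(A_i))$ under $u_0\in D(A_i)$ and $h\in W^{1,1}(0,T;L^2_{\frac1a})$ is the standard strong-solution statement for analytic semigroups (e.g. Pazy, Theorem 4.2.4).

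I expect the main obstacle to be the precise identification of the domain, i.e. showing that the abstractly defined operator from the form coincides with the concrete $A_i$ on $\cW$ including the correct boundary behaviour at the two endpoints and the interior degeneracy point $x_0$. The subtlety is that near $x_0$ one does not have $u''\in L^2$ but only the combination $au''+\frac\lambda b u\in L^2_{\frac1a}$, so one cannot split the two terms; one must work directly with the quantity $A_iu$ and exploit Lemma \ref{leso} ($u(x_0)=0$ for $u\in\cK_{a,b}$) together with Lemma \ref{densita} (density of $H^1_c$) to justify the integration by parts across $x_0$ without boundary contributions. The rest — continuity and coercivity of the form, the semigroup generation, and the strong-solution regularity — is routine given the Hardy--Poincar\'e machinery already set up, so the bulk of the work is this localisation argument, which proceeds exactly as in \cite[Lemma 2.2]{fm1} and the well-posedness proofs of \cite{fm1, fm2}.
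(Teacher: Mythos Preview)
Your proposal is correct and follows essentially the same route as the paper: both use the bilinear form associated with $-A_i$, establish coercivity via Propositions \ref{eq} and \ref{eqN}, invoke Lax--Milgram, and conclude self-adjointness and nonpositivity, from which analytic contraction semigroup generation of angle $\pi/2$ and the strong-solution regularity follow by standard references. The paper is somewhat terser on the domain identification---it defines the Lax--Milgram solution operator $T$, notes it is symmetric and injective hence self-adjoint, and declares $A_i=T^{-1}$ without spelling out the elliptic-regularity step you describe---but your more explicit treatment of this point (via Lemma \ref{green} and the behaviour at $x_0$) is exactly what underlies that identification.
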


Observe that in the non degenerate case we know that the heat operator with an inverse-square singular potential gives rise to well posed Cauchy problems if and only if the parameter $\lambda$ that appears in \eqref{linear} is not larger than the best Hardy inequality (see, for example, \cite{vz2}). For this reason, it is not strange that also in this case we require an analogous condition for \eqref{linear} (for other comments see \cite{fm2}).

We recall that the case $\lambda =0$ is considered in \cite{bfm1} and in \cite{fm1} when Neumann or Dirichlet boundary conditions hold, respectively.

\begin{proof}[Proof of Theorem  \ref{theorem1}]$\quad$

{\em\underline{If Dirichlet boundary conditions hold:} }
 Observe that $D(A_1)$ is dense in $L^2_{\frac{1}{a}}(0,1)$.  We will proceed as in \cite{fm2} proving that $A_1$ is
nonpositive, self-adjoint and  hence $m-$ dissipative.
\\
\textbf{$\boldsymbol {A_1}$ is nonpositive.} By Proposition \ref{eq},
for all $u \in D(A_1)$ we have
\[
\begin{aligned}
- \langle A_1u, u \rangle_{L^2_{\frac{1}{a}}(0,1)} = -\int_0^1\left( au'' +
\frac{\lambda}{b}u\right)\frac{u}{a} dx = \int_0^1  (u')^2 dx - \lambda
\int_0^1 \frac{u^2}{ab}dx
 \ge \Lambda
\|u\|_{\cK}^2 \ge 0,
\end{aligned}
\]
which proves the result.
\\
\textbf{$\boldsymbol {A_1}$ is self-adjoint.} Let $T:L^2_{\frac{1}{a}}(0,1)\to L^2_{\frac{1}{a}}(0,1)$ be the mapping defined in the following usual way: to each $h\in L^2_{\frac{1}{a}}(0,1)$ we associate the weak solution $u=T(h)\in {\cal K}$ of 
\[
\int_0^1 \left(u'v' - \lambda \frac{u v}{ab}
\right) dx= \int_0^1 \frac{h v}{a} dx
\]
for every $v\in {\cal K}$. Note that $T$ is well defined by the Lax--Milgram Lemma via Proposition \ref{eq}, which also implies that $T$ is continuous. Now, it is easy to see that $T$ is injective and symmetric. Thus it is self--adjoint. As a consequence, $A_1=T^{-1}:D(A_1)\to L^2_{\frac{1}{a}}(0,1)$ is self--adjoint (for example, see \cite[Proposition A.8.2]{Taylor}).
\\
\textbf{$\boldsymbol{A_1}$ is $\boldsymbol{m}$--dissipative}. Being $A_1$ nonpositive and self--adjoint, the $m-$ dissipativity of the operator is a straightforward consequence of \cite[Corollary 2.4.8]{ch}.
\\
Hence $(A_1, D(A_1))$ generates a cosine family
and an analytic contractive semigroup of angle $\ds\frac{\pi}{2}$ on
$L^2_{\frac{1}{a}}(0,1)$ (see, for example, \cite[Example 3.14.16 and
3.7.5]{abhn}).
\\
The additional regularity is a consequence of \cite[Lemma 4.1.5 and Proposition 4.1.6]{ch}.

{\em  \underline {If Neumann boundary conditions hold: } }
The proof in this case is similar to the previous one, but the nonpositivity of the operator $A_2$ and the wellposedness of $T$ follow by Proposition \ref{eqN}.
\end{proof}

\section{Carleman estimates for the adjoint problem}\label{sec4}
In this section we prove one of the main result of this paper, i.e.
new Carleman estimates for solutions of
 the following problem:
\[
\begin{cases}
v_t +a(x)v_{xx}  + \displaystyle \frac{\lambda}{b(x)}v=h(t,x)=h, & (t,x) \in Q_T,\\
Bv(0)=Bv(1)=0, &t \in (0,T),\\
v(T,x)= v_T(x)\in L^2_{\frac{1}{a}}(0,1),
\end{cases}
\]
which is the adjoint problem of \eqref{linear}. Here $T>0$ is given. As it is well known, to prove Carleman estimates the final datum is irrelevant, only the equation and the boundary conditions are important. For this reason we can consider only the problem

\begin{equation}\label{P-adjoint}
\begin{cases}
v_t +a(x)v_{xx}  + \displaystyle \frac{\lambda}{b(x)}v=h(t,x)=h, & (t,x) \in Q_T,\\
Bv(0)=Bv(1)=0, &t \in (0,T).
\end{cases}
\end{equation}

\vspace{0.5cm}

First of all, we will consider the case when $a$ and $b$ are strictly positive, since it will be crucial in the next section to prove observability inequalities.
On $a$ and $b$ we make the following assumptions:
\begin{Assumptions}\label{iponondegenere} There exist two strictly positive constants $a_0, b_0$ such that $a\geq
a_0$ and $b \ge b_0$ in $[0,1]$. Moreover, $b \in C\big([0,1]\big)$
 and $a$ satisfies
\begin{itemize}
\item[$(a_1)$] $a\in W^{1,1}(0,1)$,  there exist two functions $\fg \in L^1(0,1)$,
$\fh \in W^{1,\infty}(0,1)$ and two strictly positive constants
$\fg_0$, $\fh_0$ such that $\fg(x) \ge \fg_0$ and
\[\frac{a'(x)}{2\sqrt{a(x)}}\left(\int_x^{1}\fg(t) dt + \fh_0 \right)+ \sqrt{a(x)}\fg(x) =\fh(x)\quad \text{for a.e.} \; x \in [0,1];\]
or
\item[$(a_2)$] $a\in W^{1,\infty}(0,1)$.
\end{itemize}
\end{Assumptions}

Now, define
\begin{equation}\label{theta}
\Theta(t):=\frac{1}{[t(T-t)]^4}\quad (\rightarrow + \infty \,
\text{ as } t \rightarrow 0^+, T^-)
\end{equation}
and
\begin{equation}\label{c_1nd}
\rho_{0,1}(x):=
\displaystyle\begin{cases} \displaystyle - r\left[\int_0^x
\frac{1}{\sqrt{a(t)}} \int_t^1
\fg(s) dsdt + \int_0^x \frac{\fh_0}{\sqrt{a(t)}}dt\right] -\mathfrak{c}, &\text{ if  ($a_1$) holds,}\\
\displaystyle  e^{r\zeta_1(x)}-\mathfrak{c}, &\text{ if  ($a_2$)
holds},\end{cases}
\end{equation}
where
 \[
\zeta_1(x)=\mathfrak{d}\int_x^{1}\frac{1}{a(t)}dt,
\]
$\fd=\|a'\|_{L^\infty(0,1)}$ and $\mathfrak{c}>0$ is
chosen in the second case in such a way that $\displaystyle
\max_{[0,1]} \rho_{0,1}<0$.

\begin{Proposition}[\bf Nondegenerate Carleman estimate]\label{classical
Carleman} Assume Hypothesis $\ref{iponondegenere}$.
Let $z$ solves the non degenerate system
\begin{equation}
    \label{NDeq-z*}
    \begin{cases}
      z_t + az_{xx} + \lambda \displaystyle \frac{z}{b}= h\in L^2\big(Q_T\big),
         \\
    Bz(0)= Bz(1)=0, \; t \in (0,T).  
    \end{cases}
    \end{equation}
Then, if Dirichlet boundary conditions hold, there exist three positive constants $C$, $r$ and $s_0$
such that for any $s>s_0$
\begin{equation}\label{carcorretta}
\begin{aligned}
\int_{Q_T} \left(s\Theta (z_x)^2 + s^3 \Theta^3
 z^2\right)e^{2s\Phi}dxdt\le C\left(\int_{Q_T} h^{2}e^{2s\Phi}dxdt -
(\mbox{B.T.})
\right),
\end{aligned}
\end{equation}
where
\begin{equation}\label{BT}
(\mbox{B.T.}) = \begin{cases} \ds sr\int_0^T\Theta(t)\left[\sqrt{a}\left(\int_x^{1} \fg(\tau) d\tau +
\fh_0 \right)(z_x)^2 e^{2s\Phi}\right]_{x=0}^{x=1}dt,  & \text{if $(a_1)$ holds,}\\
\ds sr\int_0^T\left[ae^{2s\Phi}\Theta e^{r\zeta_1}(z_x)^2
\right]_{x=0}^{x=1}dt,  & \text{if $(a_2)$ holds.}
\end{cases}
\end{equation}
If Neumann boundary conditions hold and $(\sigma, \gamma) \subset (0,1)$, then there exist three positive constants $C$ (depending on $\sigma$ and $\gamma$), $r$ and $s_0$
such that for any $s>s_0$
\begin{equation}\label{carcorrettaN}
\begin{aligned}
\int_{Q_T} \left(s\Theta (z_x)^2 + s^3 \Theta^3
 z^2\right)e^{2s\Phi}dxdt\le C\left(\int_{Q_T} h^{2}e^{2s\Phi}dxdt + \int_0^T\int_\sigma^\gamma z^2e^{2s\Phi}dxdt
\right).
\end{aligned}
\end{equation}

Here $\Phi(t,x):
=\Theta(t)\rho_{0,1}(x)$.
\end{Proposition}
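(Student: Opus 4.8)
The plan is to follow the classical Fursikov–Imanuvilli strategy, adapted to the non-divergence form with the singular potential absorbed as a harmless zero-order perturbation (legitimate because $b\ge b_0>0$ and $a\ge a_0>0$ on $[0,1]$, so $\lambda z/b\in L^2(Q_T)$ can be moved to the right-hand side). First I would introduce the weight $\Phi(t,x)=\Theta(t)\rho_{0,1}(x)$, set $w:=e^{s\Phi}z$, and rewrite the equation $z_t+az_{xx}=h-\lambda z/b=:\tilde h$ in terms of $w$. This produces an identity $L_s^+ w + L_s^- w = h_s$, where $L_s^+$ collects the terms with an even number of derivatives in $s\Phi$ and $L_s^-$ the odd ones; concretely, after discarding the lower-order $s\Theta_t\rho_{0,1}$-type pieces one takes
\[
L_s^+ w = a w_{xx} + s^2\Theta^2 a(\rho_{0,1}')^2 w + s\Theta_t\rho_{0,1} w,\qquad
L_s^- w = w_t + 2s\Theta a\rho_{0,1}' w_x + s\Theta a\rho_{0,1}'' w,
\]
with $h_s = e^{s\Phi}\tilde h - (\text{the discarded lower-order terms})$. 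The key quantity is the double product $2\langle L_s^+w, L_s^-w\rangle_{L^2(Q_T)}$, which by $\|L_s^+w\|^2+\|L_s^-w\|^2+2\langle L_s^+w,L_s^-w\rangle=\|h_s\|^2$ is bounded by $\|h_s\|^2$; the whole game is to expand this scalar product into nine terms, integrate by parts in $t$ and in $x$, and show that the dominant surviving contributions are the positive "distributed" terms $s^3\Theta^3 z^2$ and $s\Theta z_x^2$ (in the $w$-variables $s^3\Theta^3 w^2$ and $s\Theta w_x^2$) plus boundary terms at $x=0,1$, modulo terms absorbable for $s$ large.

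The central computational input is the choice of $\rho_{0,1}$ made in \eqref{c_1nd}: it is precisely tuned so that $-a(\rho_{0,1}')'$ (equivalently the combination $(a\rho_{0,1}')'$ arising from the $x$-integrations by parts) has a definite sign and does not vanish — this is where Hypothesis \ref{iponondegenere} enters. Under $(a_1)$ one computes $\sqrt{a}\,\rho_{0,1}' = -r\big(\int_x^1\fg\,ds+\fh_0\big)$, hence $(\sqrt a\,\rho_{0,1}')' = r\fg \ge r\fg_0>0$, and the defining relation $\frac{a'}{2\sqrt a}(\int_x^1\fg+\fh_0)+\sqrt a\,\fg=\fh\in W^{1,\infty}$ guarantees that the problematic term $a'\rho_{0,1}'$-type expressions are $L^\infty$, so no unbounded coefficient appears; under $(a_2)$, with $\rho_{0,1}=e^{r\zeta_1}-\fc$ and $\zeta_1=\fd\int_x^1 a^{-1}$, a direct (if tedious) computation shows $a\rho_{0,1}'' + \tfrac12 a'\rho_{0,1}'$ carries the dominant positive weight for $r$ and then $s$ large, exactly as in \cite{cmv}, \cite{fm1}. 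I would carry out these two sub-cases in parallel, each time extracting $\ge C\!\int s^3\Theta^3 w^2 + \ge C\!\int s\Theta w_x^2$ on the left and a single boundary term $sr\int_0^T[\,\cdot\, w_x^2\,]_{x=0}^{x=1}dt$ on the right, which after undoing $w=e^{s\Phi}z$ becomes exactly \eqref{BT}.

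Returning from $w$ to $z$ is routine: $w_x = e^{s\Phi}(z_x + s\Theta\rho_{0,1}' z)$, so $s\Theta z_x^2 e^{2s\Phi}\le C(s\Theta w_x^2 + s^3\Theta^3 w^2)$ and conversely, and the zero-order perturbation is absorbed since $\int_{Q_T}(\lambda z/b)^2 e^{2s\Phi} \le C\int_{Q_T} z^2 e^{2s\Phi}$, which for $s\ge s_0$ is dominated by $\tfrac12\int s^3\Theta^3 z^2 e^{2s\Phi}$ (note $\Theta\ge$ const $>0$ on $[0,T]$). For the Neumann case the only difference is that the boundary terms at $x=0,1$ in $z_x$ now \emph{cannot} be discarded via the boundary condition in the same way — $Bz=z$ gives $z(t,0)=z(t,1)=0$ under (Dbc), killing the distributed-to-boundary transfer, whereas under (Nbc) one has instead $z_x(t,0)=z_x(t,1)=0$, so the $z_x^2$ boundary terms vanish automatically but new $z^2$ boundary terms appear; these are handled by the standard trick of absorbing them into an interior integral $\int_0^T\int_\sigma^\gamma z^2 e^{2s\Phi}\,dx\,dt$ over an arbitrary subinterval via a cutoff function and an integration by parts (this is the place where the constant $C$ acquires its dependence on $\sigma,\gamma$), yielding \eqref{carcorrettaN}.

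The main obstacle I anticipate is not any single estimate but the bookkeeping of the nine cross-terms together with the verification that, under $(a_1)$, every coefficient produced by the $x$-integrations by parts — in particular those involving $a'$, which is merely $L^1$ — is in fact bounded, which is the entire purpose of the ODE identity imposed on $a$, $\fg$, $\fh$, $\fh_0$; getting the signs and the powers of $s$ and $\Theta$ to line up so that, after fixing $r$ large and then $s$ large, the "good" terms strictly dominate is the delicate part. The degenerate Hardy–Poincaré machinery of Section \ref{secPR} is \emph{not} needed here (that enters only when the genuinely degenerate estimate is patched to this nondegenerate one in the next section); here $a$ and $b$ are uniformly elliptic and the proof is a perturbation of \cite{cmv}, \cite{fm1}, \cite{fm2}.
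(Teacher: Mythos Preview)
Your Dirichlet argument is essentially the paper's: move $\lambda z/b$ to the right as $\bar h=h-\lambda z/b$, apply the Carleman estimate for $z_t+az_{xx}=\bar h$ (the paper simply invokes \cite[Theorem 3.1]{fm1} as a black box, whereas you propose to redo the $L_s^\pm$ computation, but the content is the same), and then absorb $\int (\lambda z/b)^2 e^{2s\Phi}\le C\int z^2e^{2s\Phi}$ into the left via Poincar\'e on $w=e^{s\Phi}z$. That part is fine.

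The Neumann part, however, takes a different route from the paper and has a gap as stated. You propose to run the same $L_s^\pm$ computation directly under $z_x(t,0)=z_x(t,1)=0$, let the $z_x^2$ boundary terms vanish, and then ``absorb'' the surviving $z^2$ boundary terms into $\int_0^T\!\int_\sigma^\gamma z^2e^{2s\Phi}$. The problem is that among those boundary terms sits one of size $s^3\Theta^3$, namely $[-s^3 a\varphi_x^3 w^2]_{x=0}^{x=1}$, and with $\rho_{0,1}'<0$ this does not have a sign. Any trace-type identity $z^2(t,1)=z^2(t,y)+2\int_y^1 zz_x$ turns it into $s^3\Theta^3\!\int z_x^2$ on the right, which has \emph{two} excess powers of $s$ relative to the $s\Theta z_x^2$ available on the left and cannot be absorbed for $s$ large. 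So the ``standard trick'' you invoke does not close here.

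The paper avoids this entirely by a reflection procedure: extend $z$ evenly to $W$ on $(-1,2)$, multiply by a cutoff $\xi$ equal to $1$ on $[-\sigma,1+\sigma]$ and $0$ outside $[-\gamma,1+\gamma]$, so $Z=\xi W$ has Dirichlet data on $(-1,2)$, and apply the \emph{Dirichlet} Carleman estimate on the extended domain. Since $Z_x$ vanishes at $x=-1,2$, the boundary terms disappear; the price is the commutator $\tilde a(\xi_{xx}W+2\xi_xW_x)$, which is supported in $[-\gamma,-\sigma]\cup[1+\sigma,1+\gamma]$ and, after undoing the reflection, contributes $\int_0^T\!\int_\sigma^\gamma(z^2+z_x^2)e^{2s\Phi}$. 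Crucially this commutator carries \emph{no} power of $s$, so the $z_x^2$ piece is absorbed by $\tfrac{s}{2}\Theta z_x^2e^{2s\Phi}$ on the left and only $\int_0^T\!\int_\sigma^\gamma z^2e^{2s\Phi}$ survives, giving \eqref{carcorrettaN}. If you want to salvage your direct approach you would need either a weight whose derivative vanishes at $x=0,1$ (so the bad $s^3$ boundary term dies) or to adopt the reflection; the latter is what the paper does.
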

(Observe that $\Phi <0$ and $\Phi(t,x) \rightarrow -\infty$, as $t \rightarrow 0^+, T^-$.)

\begin{proof}
$\quad$

{\it\underline{If Dirichlet boundary conditions hold:}}
     Rewrite the equation 
  satisfied by $z$  as $ z_t + az_{xx} = \bar{h}, $ where $\bar{h}
    := h - \lambda \displaystyle \frac{z}{b} $. Then, applying \cite[Theorem 3.1]{fm1}, there exist
three positive constants $C$, $r$ and $s_0 >0$, such that
    \begin{equation}\label{fati1_c}
    \begin{aligned}
 \int_{Q_T} \left(s\Theta (z_x)^2 + s^3 \Theta^3
 z^2\right)e^{2s\Phi}dxdt\le C\left(\int_{Q_T} {\bar h}^{2}e^{2s\Phi}dxdt -
\text{(B.T.)}\right),
    \end{aligned}
    \end{equation}
    for all $s \ge s_0$. Here the boundary terms (B.T.) are as in \eqref{BT}.
    Using the definition of $\bar{h}$,
    the term $\int_{Q_T}\bar{h}^2e^{2s\Phi}
    dxdt$ can be estimated in the following way:
    \begin{equation}\label{4_c}
    \begin{aligned}
    \int_{Q_T}  \bar{h}^2e^{2s\Phi}dxdt \le
    2\int_{Q_T} h^2 e^{2s\Phi}dxdt
   +2\lambda^2\int_{Q_T}
    \frac{z^2}{b^2} e^{2s\Phi}dxdt.
    \end{aligned}
    \end{equation}
Now, we proceed as in \cite[Proposition 4.3]{fm2}:
applying the classical Poincar\'{e} inequality to $w(t,x) :=
e^{s\Phi} z(t,x)$ and observing that
$0<\inf \Theta\leq \Theta\leq c \Theta^2$, one has
    \[
    \begin{aligned}
 2\lambda^2 \int_{Q_T}\frac{z^2}{b^2}  e^{2s\Phi}dxdt &= 2\lambda^2\int_{Q_T}
\frac{w^2}{b^2} dxdt\le 2\frac{\lambda^2}{b_0^2}C \int_{Q_T}  (w_x)^2 dxdt
\\&\leq C \int_{Q_T}  (s^2\Theta^2  z^2+(z_x)^2 )e^{2s\Phi} dxdt \\
&\le \int_{Q_T} \frac{s}{2} \Theta (z_x)^2e^{2s\Phi} dxdt+\int_{Q_T} \frac{s^3}{2}\Theta^3 z^2e^{2s\Phi} dxdt,
 \end{aligned}\]
for $s$ large enough.
    Substituting this inequality in (\ref{4_c}), we have
    \[
    \begin{aligned}
  \int_{Q_T} \bar{h}^2e^{2s\Phi}dxdt&\le
2\int_{Q_T} h^2e^{2s\Phi}dxdt
   +
 \int_{Q_T}\frac{s}{2} \Theta (z_x)^2e^{2s\Phi}
dxdt +\int_{Q_T}\frac{s^3}{2}\Theta^3 z^2e^{2s\Phi} dxdt.
    \end{aligned}
    \]
    Using the last inequality
    in (\ref{fati1_c}), \eqref{carcorretta} follows immediately.

{\it\underline{If Neumann boundary conditions hold:}} We will use a reflection procedure. Consider a
smooth function $\xi: [-1,2] \to \R$ such that $\xi \equiv 1$ in $[-\sigma, 1+\sigma]$ and $ \xi \equiv 0$ in $ [-1, -\gamma] \cup [1+\gamma, 2]$.
Now, define
\begin{equation}\label{W}
W(t,x):= \begin{cases}
z(t, 2-x), &x \in [1,2],\\
z(t,x), & x \in [0,1],\\
z(t,-x), & x \in [-1,0],
\end{cases}
\end{equation}
where $z$ solves \eqref{NDeq-z*}.
Thus $W$ satisfies
\begin{equation}\label{PW}
\begin{cases}
W_t + \tilde a W_{xx}+ \lambda \ds \frac{W}{\tilde b} =\tilde h, & (t,x) \in (0,T) \times (-1,2),\\
W_x(t,-1)=W_x(t,2)=0, &  t \in (0,T),\\
\end{cases}
\end{equation}
being
\begin{equation}\label{tildea}
\tilde a(x):= \begin{cases}
a(2-x),& x \in [1,2],\\
a(x), & x \in [0,1],\\
a(-x), & x \in [-1,0],
\end{cases} \quad
\tilde b(x):= \begin{cases}
b(2-x),& x \in [1,2],\\
b(x), & x \in [0,1],\\
b(-x), & x \in [-1,0]
\end{cases}
\end{equation}
and
\begin{equation}\label{tildeh}
\tilde h(t,x) := \begin{cases}
h(t, 2-x), &x \in [1,2],\\
h(t,x), & x \in [0,1],\\
h(t,-x), & x \in [-1,0].
\end{cases}
\end{equation}
Observe that $\tilde a, \tilde b$ belong to $W^{1,1}(-1,2)$ or  to $W^{1,\infty}(-1,2)$, if $a, b$ belong to  $W^{1,1}(0,1)$ or  to $W^{1,\infty}(0,1)$, respectively.
Now, set $Z:= \xi W$; then $Z$ solves
\[
\begin{cases}
Z_t + \tilde a Z_{xx} =H, & (t,x) \in (0,T) \times (-1,2),\\
Z(t,-1)=Z(t,2)=0, &  t \in (0,T),\\
\end{cases}
\]
 with  $H:=\xi \tilde h+ \tilde a (\xi _{xx} W + 2\xi _x  W_x)-\ds\lambda \xi \frac{W}{\tilde b} $. Observe that $Z_x(t, -1)=Z_x(t, 2)=0$ and, by the assumption on $a$, $H\in L^2((0,T); L^2_{\frac{1}{\tilde a}} (-1,2)).$
Now, define
$\tilde \Phi(t,x) := \Theta(t) \rho_{-1,2} (x),$ with
\[
\rho_{-1,2}(x):=
\displaystyle - r\left[\int_{-1}^x
\frac{1}{\sqrt{a(t)}} \int_t^2
\fg(s) dsdt + \int_{-1}^x \frac{\fh_0}{\sqrt{a(t)}}dt\right] -\mathfrak{c}, 
\]
if the analogous of  ($a_1$) holds for  $\tilde a \text{ in } [-1, 2],$ and
\[
\rho_{-1,2}(x):=
\displaystyle  e^{r\zeta_2(x)}-\mathfrak{c}, 
\]
if the analogous of  ($a_2$) is in force for
$\tilde a\text{ in } [-1, 2]$. Here
 \[
\zeta_2(x)=\mathfrak{d}\int_x^{2}\frac{1}{\tilde a(t)}dt,
\]
$\fd=\|\tilde a'\|_{L^\infty(-1,2)}$ and $\mathfrak{c}>0$ is
chosen in the second case in such a way that $\displaystyle
\max_{[-1,2]} \rho_{-1,2}<0$.

Thus, we can apply the
analogue of \cite[Theorem 3.2]{fm1} on $(-1,2)$ in place of
$(A,B)$ and with weight $\tilde \Phi$, obtaining that there exist
two positive constants $C$ (depending on $\sigma$ and $\gamma$) and $s_0$ ($s_0$ sufficiently large), such that $Z$
satisfies, for all $s \ge s_0$,
\[
\begin{aligned}
&\int_0^T\int_{-1}^2 \left(s\Theta (Z_x)^2 + s^3 \Theta^3
Z^2\right)e^{2s\tilde \Phi}dxdt\\
&\le C\left(\int_0^T\int_{-1}^2 H^{2}e^{2s\tilde \Phi}dxdt -
sr\int_0^T\Theta(t)\left[\sqrt{a}\left(\int_x^2 \fg(\tau) d\tau +
\fh_0 \right)(Z_x)^2 e^{2s\tilde \Phi}\right]_{x=-1}^{x=2}dt\right)\\
&= C\int_0^T\int_{-1}^2 H^{2}e^{2s\tilde \Phi}dxdt= C\int_0^T\int_{-1}^{2}\left(\xi \tilde h+ \tilde a (\xi _{xx} W + 2\xi _x  W_x)-\ds\lambda \xi \frac{W}{\tilde b}\right)^2e^{2s\tilde \Phi}dxdt\\
&\le C\int_0^T\int_{-1}^{2} \left( \tilde h^2+ \lambda ^2 \left(\frac{W}{\tilde b}\right)^2\right) dxdt \\
&+ C \left( \int_0^T\int_{-\gamma}^{-\sigma}(W^2 +
W_x^2)dxdt+ \int_0^T\int_{1+\sigma}^{1+\gamma}(W^2 +
W_x^2)dxdt\right).
\end{aligned}
\]

Hence, by definitions of $Z$, $W$ and $\tilde b$,
we have
\[
\begin{aligned}
&\int_{Q_T}\left(s\Theta (z_x)^2 + s^3 \Theta^3
z^2\right)e^{2s\Phi}dxdt
\le
\int_0^T\int_{-1}^2 \left(s\Theta (Z_x)^2 + s^3 \Theta^3
Z^2\right)e^{2s\tilde \Phi}dxdt\\
&\le C \int_0^T\int_{-1}^{2} \left( \tilde h^2+ \lambda ^2 \left(\frac{W}{\tilde b}\right)^2\right) dxdt\\
&+ C \left( \int_0^T\int_{-\gamma}^{-\sigma}(W^2 +
W_x^2)dxdt+ \int_0^T\int_{1+\sigma}^{1+\gamma}(W^2 +
W_x^2)dxdt\right)\\
&\le C\left(\int_{Q_T}\left( h^2+ \lambda ^2 \left(\frac{z}{b}\right)^2\right)e^{2s\Phi}dxdt+ \int_0^T\int_\sigma^\gamma (z^2+z_x^2) e^{2s\Phi}dxdt\right)\\
& \le C\left( \int_{Q_T} h^2 e^{2s\Phi}dxdt+  \frac{\lambda ^2}{b_0}\int_{Q_T} z^2e^{2s\Phi}dxdt+ \int_0^T\int_\sigma^\gamma (z^2+z_x^2) e^{2s\Phi}dxdt\right)\\
&\le C\left( \int_{Q_T} \frac{s^3}{2} \Theta^3 z^2e^{2s\Phi} dxdt + \int_{Q_T} \frac{s}{2} \Theta( z_x)^2e^{2s\Phi} dxdt\right)\\
 &+ C\left( \int_{Q_T} h^2 e^{2s\Phi}dxdt+ \int_0^T\int_\sigma^\gamma z^2e^{2s\Phi}dxdt\right)
\end{aligned}
\]
for $s$ large enough and for a positive constant $C$ depending on $\sigma$ and $\gamma$. Thus the claim follows immediately.
\end{proof}

\begin{Remark}\label{rembo} We underline that Proposition \ref{classical Carleman} still holds if we substitute the spatial domain $[0,1]$ with a general interval $[\mathcal A, \mathcal B]$ where the functions $a$ and $b$ satisfy Hypothesis \ref{iponondegenere}.
\end{Remark}
\vspace{0.5cm}

In the following we will assume that the functions $a$ and $b$ are zero at $x_0$. In particular, we make the following assumptions:
\begin{Assumptions}\label{Ass02}
\begin{enumerate}
\item Hypothesis $\ref{Ass03}$ is satisfied;
\item $\ds
\frac{(x-x_0)a'(x)}{a(x)} \in
W^{1,\infty}(0,1);$
\item if $K_1 \ge\ds \frac{1}{2}$, there exists $\theta \in (0, K_1]$ such that the function $x \mapsto \ds\frac{a}{|x-x_0|^\theta}$ is nonincreasing on the left  and nondecreasing on the right of $x= x_0$;
\item  if $\lambda <0$, then
$
(x-x_0)b'(x) \ge 0  \text{ in } [0,1]$.
\end{enumerate}
\end{Assumptions}

To prove Carleman estimates, let us introduce the function $\varphi:= \Theta \psi$, where $\Theta$ is as in \eqref{theta} and
\begin{equation}\label{c_1}
\psi(x) := d_1\left(\int_{x_0}^x \frac{y-x_0}{a(y)}e^{R(y-x_0)^2}dy-
d_2\right).
\end{equation}
Here $R$ and $d_1$ are general strictly positive constants, while
\[d_2> \displaystyle
\max\left\{\frac{(1-x_0)^2e^{R(1-x_0)^2}}{(2-K)a(1)},
\frac{x_0^2e^{Rx_0^2}}{(2-K)a(0)}\right\}.\]
The choice of $d_2$ implies immediately, by Lemma \ref{Lemma 2.1}, that
\[
-d_1d_2\le\psi(x)<0 \quad \mbox{ for every }x\in[0,1].
\]
\vspace{0.5cm}

Now, define the space
\begin{equation}\label{v1}
\mathcal V:= H^1\big(0, T;\cK) \cap
L^2\big(0, T; \cW \big).
\end{equation}
The main result of this section is the following:
\begin{Theorem}\label{Cor1}
Assume Hypothesis $\ref{Ass02}$.  There exist two positive constants $C$ and $s_0$ (depending on $\lambda$) such that every
solution $v$ of \eqref{P-adjoint} in $\mathcal V$
satisfies, for all $s \ge s_0$,
\begin{equation}\label{car}
\begin{aligned}
&\int_{Q_T}\!\!\! \left(s\Theta (v_x)^2 + s^3 \Theta^3
\left(\frac{x-x_0}{a} \right)^2v^2\right)e^{2s\varphi}dxdt
\\
&\le C\left(\int_{Q_T} h^{2}\frac{e^{2s\varphi}}{a}dxdt+sd_1\int_0^T\Theta\Big[(x-x_0)e^{R(x-x_0)^2}(w_x)^2\Big]_{x=0}^{x=1}dt\right)
\end{aligned}
\end{equation}
if (Dbc) hold and
\begin{equation}\label{carN}
\begin{aligned}
&\int_{Q_T} \!\!\!\left(s\Theta (v_x)^2 + s^3 \Theta^3
\left(\frac{x-x_0}{a} \right)^2v^2\right)e^{2s\varphi}dxdt
\le C\left(\int_{Q_T} h^{2}\frac{e^{2s\varphi}}{a}dxdt +
 \int_{Q_T} v^{2}e^{2s\varphi}dxdt\right),
\end{aligned}
\end{equation}
if (Nbc) are in force.

More precisely, if $\omega$ is a strict subset of $(0,1)$ such that $x_0 \in \omega$, then \eqref{carN} becomes
\begin{equation}\label{carN_1}
\begin{aligned}
&\int_{Q_T} \!\!\!\left(s\Theta (v_x)^2 + s^3 \Theta^3
\left(\frac{x-x_0}{a} \right)^2v^2\right)e^{2s\varphi}dxdt\le C\left(\int_{Q_T} h^{2}\frac{e^{2s\varphi}}{a}dxdt +\int_0^T \int_\omega v^2e^{2s\varphi}dxdt\right).
\end{aligned}
\end{equation}
Here $d_{1}$ is the constant introduced in
\eqref{c_1}.
\end{Theorem}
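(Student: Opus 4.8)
The strategy is the classical one for degenerate Carleman estimates (as in \cite{fm1}, \cite{fm2}, \cite{bfm1}): work with the conjugated variable $w := e^{s\varphi} v$, derive a pointwise identity by splitting the transformed operator into a self-adjoint part $L_s^+ w$ and a skew-adjoint part $L_s^- w$, expand $\|L_s w\|^2 = \|L_s^+ w\|^2 + \|L_s^- w\|^2 + 2\langle L_s^+ w, L_s^- w\rangle$, and compute the crucial cross term $\langle L_s^+ w, L_s^- w\rangle$ by repeated integration by parts (using Lemma \ref{green} to justify the boundary manipulations in the degenerate setting). The weight $\psi$ in \eqref{c_1} is engineered precisely so that, after integration by parts, the dominant terms carry the correct sign: the distributed terms of the form $\int s\Theta(v_x)^2 e^{2s\varphi}$ and $\int s^3 \Theta^3 \big(\frac{x-x_0}{a}\big)^2 v^2 e^{2s\varphi}$ appear with positive coefficients for $s$ large, while lower-order terms are absorbed. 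Since $w(0,\cdot) = w(T,\cdot) = 0$ (because $\Theta \to +\infty$ at $t = 0, T$ and $\varphi < 0$ forces $e^{2s\varphi} \to 0$ with all weighted derivatives), the time-boundary terms vanish, and the space-boundary terms produce exactly the $\big[(x-x_0)e^{R(x-x_0)^2}(w_x)^2\big]_{x=0}^{x=1}$ expression displayed in \eqref{car}.

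First I would treat the Dirichlet case on the whole interval $(0,1)$. One must be careful that the degeneracy point $x_0$ lies inside the domain, so the pointwise identity must be derived separately on $(0,x_0)$ and $(x_0,1)$ and the contributions at $x_0$ controlled: here Hypothesis \ref{Ass02}.2 (the regularity of $(x-x_0)a'/a$), Hypothesis \ref{Ass02}.3 (the monotonicity of $a/|x-x_0|^\theta$ when $K_1 \ge 1/2$, needed to bound $\psi$, $\psi'$, $\psi''$ and the limit behavior at $x_0$), and Lemma \ref{Lemma 2.1} enter decisively. The Hardy--Poincaré inequalities from Subsection \ref{HPDBC} (in the form \eqref{1}, \eqref{1nondiv} or \eqref{2nondiv}) are used to absorb the terms coming from the singular potential $\lambda v / b$: one writes $\lambda \int \frac{v^2}{b}(\cdots)$ and estimates it by a Hardy--Poincaré term times a constant, which for $s$ large is dominated by the good distributed terms — this is exactly where the smallness condition $\lambda < 1/C^*$ from Hypothesis \ref{Ass03} is invoked, and where Hypothesis \ref{Ass02}.4 (sign of $(x-x_0)b'$ when $\lambda < 0$) is needed to keep an auxiliary $b'$-term from spoiling the sign. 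Returning to the original variable $v$ via $w = e^{s\varphi}v$ then yields \eqref{car}.

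For the Neumann case I would again use the reflection argument already employed in Proposition \ref{classical Carleman}: extend $v$ evenly across $x = 0$ and $x = 1$ to an interval such as $(-1,2)$, obtaining a function satisfying a Neumann problem with even extensions $\tilde a$, $\tilde b$; then, because the extensions are even and vanish at the reflected degeneracy points together with their first derivatives, the Dirichlet-type estimate can be applied on the extended interval. The boundary terms at $x = 0,1$ disappear thanks to $Bv = v_x = 0$, but — since the problem is now posed on a larger interval — one inevitably picks up an interior term of the form $\int_{Q_T} v^2 e^{2s\varphi}$ (coming from the cutoff/localization near the new boundary), giving \eqref{carN}. To upgrade \eqref{carN} to \eqref{carN_1} when $x_0 \in \omega$: combine the degenerate estimate on a subinterval around $x_0$ contained in $\omega$ with a \emph{nondegenerate} Carleman estimate (Proposition \ref{classical Carleman}, Remark \ref{rembo}) on the two complementary subintervals where $a, b$ are strictly positive, then glue the two estimates together with cutoff functions, observing that all the resulting extra interior integrals are supported in $\omega$; the standard Caccioppoli-type inequality lets one drop the $v_x^2$ localized terms in favor of $v^2$ localized terms, producing the clean right-hand side $\int_0^T\int_\omega v^2 e^{2s\varphi}$.

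The main obstacle I expect is the careful bookkeeping of the cross term $\langle L_s^+ w, L_s^- w\rangle$ in the presence of \emph{interior} degeneracy and \emph{non-smooth} coefficients: each integration by parts generates a boundary contribution at $x_0^{\pm}$, and one must show these cancel or vanish in the limit using precisely the hypotheses on $a$, $b$, $\psi$ and the Hardy--Poincaré machinery — the coefficients $a$, $b$ being only $W^{1,1}$ or $W^{1,\infty}$ means one cannot differentiate freely and must work with absolutely continuous representatives and one-sided limits throughout. A secondary difficulty is tracking the $\lambda$-dependent terms and verifying that the threshold $\lambda < 1/C^*$ is exactly what makes the absorption work in every one of the cases (WWD), (SSD), (WSD), (SWD) of Hypothesis \ref{Ass03_new}.
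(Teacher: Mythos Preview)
Your plan for the Dirichlet estimate \eqref{car} and for the first Neumann estimate \eqref{carN} coincides with the paper's: conjugation $w=e^{s\varphi}v$, splitting $L_sw=L_s^+w+L_s^-w$, expansion of the cross term by integration by parts (this is the content of Lemmas \ref{lemma1}--\ref{lemma2}), and for the Neumann case an even reflection to $(-1,2)$, a cutoff $\xi$ supported away from the new endpoints, and application of the already-proved Dirichlet estimate on a shrunken interval $(-\beta-\delta,1+\beta+\delta)$ containing the single degeneracy $x_0$. The cutoff derivatives produce the extra $\int_{Q_T}v^2e^{2s\varphi}$ term (after the $\Theta(v_x)^2$ contribution is absorbed by taking $s$ large), exactly as you say.

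Where you diverge is in the passage from \eqref{carN} to \eqref{carN_1}. You propose to localize near $x_0$ with a cutoff, apply the \emph{nondegenerate} Carleman estimate of Proposition \ref{classical Carleman} on the complementary pieces, glue, and invoke a Caccioppoli inequality to trade $v_x^2$ for $v^2$. That machinery is precisely what the paper uses \emph{later}, in Lemma \ref{lemma3}, to derive the observability inequality---and it requires the extra structural condition \eqref{5.3'} from Hypothesis \ref{Ass04}, which is \emph{not} part of Hypothesis \ref{Ass02}. For \eqref{carN_1} the paper's argument is far simpler and needs no additional tools: since $x_0\in\omega$, on $(0,1)\setminus\omega$ the ratio $(x-x_0)/a$ is bounded away from zero and $\Theta$ is bounded below, so
\[
\int_0^T\!\!\int_{(0,1)\setminus\omega} v^2e^{2s\varphi}\,dxdt
\;\le\; C\int_{Q_T}\Theta^3\Big(\frac{x-x_0}{a}\Big)^{2} v^2 e^{2s\varphi}\,dxdt,
\]
and for $s$ large this is absorbed by the left-hand side of \eqref{carN}, leaving only $\int_0^T\!\int_\omega v^2e^{2s\varphi}$. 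Your route would work under the stronger hypotheses of Section \ref{sec5}, but as a proof of \eqref{carN_1} under Hypothesis \ref{Ass02} alone it imports assumptions the theorem does not make.
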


\subsection{Proof of Theorem  \ref{Cor1}  if (Dbc) hold}
For the proof of Theorem \ref{Cor1} we proceed as in \cite{fm2}. First, for $s> 0$, define the function
\[
w(t,x) := e^{s \varphi (t,x)}v(t,x),
\]
where $v$ is any solution of \eqref{P-adjoint} in $\mathcal{V}$;
observe that, since $v\in\mathcal{V}$ and $\vp<0$, then
$w\in\mathcal{V}$ and satisfies
\begin{equation}\label{1'}
\begin{cases}
(e^{-s\varphi}w)_t + a(e^{-s\varphi}w)_{xx}+ \lambda \displaystyle\frac{e^{-s\varphi}w} {b} =h, & (t,x) \in Q_T,\\
w(t,0)=w(t,1)=0, &  t \in (0,T),\\ w(T,x)= w(0, x)= 0, & x \in
(0,1).
\end{cases}
\end{equation}
As usual, we re--write the previous problem as
follows: setting
\[
Lv:= v_t + (av_x)_x + \lambda \frac{v}{b} \quad \text{and} \quad
%\]
%and
%\[
L_sw= e^{s\varphi}L(e^{-s\varphi}w),
\]
then \eqref{1'} becomes
\[
\begin{cases}
L_sw= e^{s\varphi}h, & (t,x) \in Q_T,\\
w(t,0)=w(t,1)=0, & t \in (0,T),\\
w(T,x)= w(0, x)= 0, & x \in (0,1).
\end{cases}
\]
Computing $L_sw$, one has
\[
\begin{aligned}
L_sw
=L^+_sw + L^-_sw,
\end{aligned}
\]
where
\[
L^+_sw := aw_{xx} + \lambda\frac{w}{b}
 - s \varphi_t w + s^2a \varphi_x^2 w,
\]
and
\[
L^-_sw := w_t -2sa\varphi_x w_x -
 sa\varphi_{xx}w.
\]
Of course,
\begin{equation}\label{stimetta}
\begin{aligned}
2\langle L^+_sw, L^-_sw\rangle_{L^2_{\frac{1}{a}}(Q_T)} &\le 2\langle L^+_sw, L^-_sw\rangle_{L^2_{\frac{1}{a}}(Q_T)}+
\|L^+_sw \|_{L^2_{\frac{1}{a}}(Q_T)}^2 + \|L^-_sw\|_{L^2_{\frac{1}{a}}(Q_T)}^2\\
& =\| L_sw\|_{L^2_{\frac{1}{a}}(Q_T)}^2= \|he^{s\varphi}\|_{L^2_{\frac{1}{a}}(Q_T)}^2
\end{aligned}
\end{equation}
 Proceeding as in \cite{fm1} and in \cite{fm2}, we will separate the scalar product $\langle
L^+_sw, L^-_sw\rangle_{L^2_{\frac{1}{a}}(Q_T)}$ in distributed terms and boundary terms:
\vspace{0.2cm}

\begin{Lemma}\label{lemma1}
The following identity holds:
\begin{equation}\label{D&BT}
\begin{aligned}
& \left.
\begin{aligned}
&\langle L^+_sw,L^-_sw\rangle_{L^2_{\frac{1}{a}}(Q_T)} \;\\&=\; \frac{s}{2} \int_{Q_T}
\frac{\varphi_{tt}}{a} w^2dxdt 
- 2s^2 \int_{Q_T} \varphi_x \varphi_{tx}w^2dxdt \\& +s
\int_{Q_T}(2 a\varphi_{xx} + a'\varphi_x)(w_x)^2 dxdt
\\&+ s^3 \int_{Q_T}(2a \varphi_{xx} + a'\varphi_x)
(\varphi_x)^2 w^2dxdt
\\&+s\int_{Q_T}(a\varphi_{xx})_{x}ww_x dxdt-s\lambda
\int_{Q_T} \frac{\varphi_xb'}{b^2} w^2 dxdt
\end{aligned}\right\}\;\text{\{D.T.\}} \\
& \left.
\begin{aligned}
& + \int_0^T[w_xw_t]_{x=0}^{x=1} dt- \frac{s}{2}
\int_0^1\left[w^2\frac{\varphi_t}{a}\right]_{t=0}^{t=T}dx+ \frac{s^2}{2}\int_0^1
[(\varphi_x)^2 w^2]_{t=0}^{t=T}dx\\& + \int_0^T[-s\varphi_x
a(w_x)^2 +s^2\varphi_t \varphi_x w^2 - s^3 a(\varphi_x)^3w^2-
s\lambda \frac{\varphi_x }{b} w^2]_{x=0}^{x=1}dt
\\&+ \int_0^T[-sa\varphi_{xx}w
w_x]_{x=0}^{x=1}dt-\frac{1}{2} \int_0^1
\Big[(w_x)^2-\lambda\frac{1}{2ab}w^2 \Big]_{t=0}^{t=T}dx.
\end{aligned}\right\}\; \text{\{B.T.\}}
\end{aligned}
\end{equation}
\end{Lemma}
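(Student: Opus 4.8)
The plan is to compute $\langle L_s^+w,L_s^-w\rangle_{L^2_{\frac{1}{a}}(Q_T)}$ by expanding it into the twelve scalar products obtained by pairing each of the four terms of $L_s^+w$, namely $aw_{xx}$, $\lambda w/b$, $-s\varphi_tw$ and $s^2a\varphi_x^2w$, with each of the three terms of $L_s^-w$, namely $w_t$, $-2sa\varphi_xw_x$ and $-sa\varphi_{xx}w$, every product being integrated against the weight $1/a$ over $Q_T$. Each of these twelve integrals is then rewritten, through integrations by parts in $x$ or in $t$, until its integrand becomes a coefficient built out of $a$, $b$, $\varphi$ and their derivatives times one of $w^2$, $(w_x)^2$ or $ww_x$; this produces, besides the distributed terms collected in \{D.T.\}, boundary contributions at $x=0,1$ and at $t=0,T$ collected in \{B.T.\}.

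Several of these steps require an integration by parts in $x$ of a term involving $w_{xx}$, which is delicate because $a$ degenerates at $x_0$; here I would invoke the formula of Lemma \ref{green}, applicable since $w(t,\cdot)\in D(A_i)$ for a.e.\ $t$ while the remaining factors belong to $\cK$, and — exactly as in \cite{fm1} and \cite{fm2} — carry the computation out first for functions supported away from $x_0$, where everything is classical, and then pass to the limit (using the density statement of Lemma \ref{densita}). Representative computations: $\int_{Q_T}w_{xx}w_t$ is integrated by parts in $x$ and then in $t$, giving $\int_0^T[w_xw_t]_{x=0}^{x=1}dt$ and the bracket in $(w_x)^2$ at $t=0,T$; $-2s\int_{Q_T}a\varphi_xw_xw_{xx}=-s\int_{Q_T}a\varphi_x\partial_x((w_x)^2)$ yields the distributed term $s\int_{Q_T}(2a\varphi_{xx}+a'\varphi_x)(w_x)^2$ plus a boundary term; $-s\int_{Q_T}a\varphi_{xx}ww_{xx}$ gives, after two integrations by parts, $s\int_{Q_T}(a\varphi_{xx})_xww_x$ plus boundary terms; the cubic pairing $-2s^3\int_{Q_T}a\varphi_x^3ww_x=-s^3\int_{Q_T}a\varphi_x^3\partial_x(w^2)$ gives $s^3\int_{Q_T}(a\varphi_x^3)_xw^2$, which after expansion feeds the $s^3(2a\varphi_{xx}+a'\varphi_x)\varphi_x^2$ term; and the pairings containing $-s\varphi_tw$ produce, after integration by parts in $t$ and in $x$, the $\frac{s}{2}\frac{\varphi_{tt}}{a}w^2$ and $-2s^2\varphi_x\varphi_{tx}w^2$ distributed terms together with the brackets in $\frac{\varphi_t}{a}w^2$ and $\varphi_t\varphi_xw^2$. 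The terms carrying the singular potential $\lambda w/b$ require one extra cancellation: pairing $\lambda w/b$ with $-2sa\varphi_xw_x$ and with $-sa\varphi_{xx}w$ both produce, after integration by parts, a $\varphi_{xx}/b$ contribution which cancels between the two, leaving only the distributed term $-s\lambda\int_{Q_T}\frac{\varphi_xb'}{b^2}w^2$ and the boundary term $[-s\lambda\frac{\varphi_x}{b}w^2]_{x=0}^{x=1}$, while pairing $\lambda w/b$ with $w_t$ contributes the $\lambda/(ab)$ term in the $t=0,T$ bracket.

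It then remains to gather all distributed integrals and all boundary integrals, checking that the powers of $s$ and the numerical coefficients combine exactly as displayed in \eqref{D&BT}. Since the identity is exact — no inequality intervenes at this stage — the only real difficulty is the bookkeeping: keeping track of every boundary term, both at $x=0,1$ and at $t=0,T$, and summing with the correct signs the contributions to $w^2$, $(w_x)^2$ and $ww_x$ coming from different pairings. The single genuinely non-routine point is the legitimacy of the degenerate integrations by parts, which is precisely what Lemma \ref{green} (and the related density result of Lemma \ref{densita}) is there to guarantee.
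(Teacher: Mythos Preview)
Your proposal is correct and follows essentially the same route as the paper: the paper groups the scalar product as $I_1+I_2+I_3+I_4$, where $I_1,I_2,I_3$ pair the three ``non-singular'' summands of $L_s^+w$ with each term of $L_s^-w$ (and are simply referred to \cite{fm1}), while $I_4$ collects all pairings involving $\lambda w/b$ and is computed explicitly, exhibiting exactly the $\varphi_{xx}/b$ cancellation you describe. Your twelve-term bookkeeping is just a finer partition of the same computation.
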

\begin{proof} Computing $\langle L^+_sw,L^-_sw\rangle_{L^2_{\frac{1}{a}}(Q_T)}$,
one has that
\[
\langle L^+_sw,L^-_sw\rangle_{L^2_{\frac{1}{a}}(Q_T)} = I_1+ I_2 + I_3+I_4,
\]
where
\[
\begin{aligned}
I_1& := \int_{Q_T}\big(aw_{xx}
 - s \varphi_t w + s^2a (\varphi_x)^2 w\big) \frac{w_t}{a}  dxdt,\\
I_2& := \int_{Q_T}\big(aw_{xx}
 - s \varphi_t w + s^2a (\varphi_x)^2 w\big) (-2s\varphi_xw_x)  dxdt,\\
I_3& := \int_{Q_T}\big(aw_{xx}
 - s \varphi_t w + s^2a (\varphi_x)^2 w\big)(-s\varphi_{xx}w) dxdt,
\end{aligned}
\]
and
\[
I_4:= \lambda \int_{Q_T} \frac{w}{ab}\big( w_t -2sa\varphi_x
w_x -
 sa\varphi_{xx}w\big)dxdt.
\]
It is sufficient to compute $I_4$, since $I_1+ I_2+ I_3$ follows as in \cite{fm1}. Hence
\begin{equation}\label{T_4}
\begin{aligned}
I_4&= \lambda \left( \int_0^1 \frac{1}{2ab}[w^2]_{t=0}^{t=T} dx -  s
\int_{Q_T} \frac{1}{b} \varphi_x (w^2)_x dxdt -s
\int_{Q_T} \frac{\varphi_{xx}}{b} w^2 dxdt\right)\\
&= \lambda \left( \int_0^1 \frac{1}{2ab}[w^2]_{t=0}^{t=T} dx -  s
\int_0^T\left[\frac{1}{b} \varphi_x w^2\right]_{x=0}^{x=1}dt\right.\\
& \left. + s \int_{Q_T} \left(\frac{\varphi_x}{b}\right)_x
w^2 dxdt-s
\int_{Q_T} \frac{\varphi_{xx}}{b} w^2 dxdt\right)\\
&=\lambda \left( \int_0^1 \frac{1}{2ab}[w^2]_{t=0}^{t=T} dx -  s
\int_0^T\left[\frac{\varphi_x}{b} w^2\right]_{x=0}^{x=1}dt -s
\int_{Q_T} \frac{\varphi_xb'}{b^2} w^2 dxdt\right).
\end{aligned}
\end{equation}
\end{proof}
For the boundary terms in \eqref{D&BT}, we have:
\begin{Lemma}\label{lemma4}
The boundary terms in Lemma \ref{lemma1} reduce to
\[-sd_1\int_0^T\Theta(t)\Big[(x-x_0)e^{R(x-x_0)^2}(w_x)^2\Big]_{x=0}^{x=1}dt.
\]
\end{Lemma}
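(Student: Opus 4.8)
The plan is to take the list of boundary terms (B.T.) written out in Lemma~\ref{lemma1} and to show, line by line, that every one of them either vanishes outright or collapses to the single displayed expression. First I would dispose of the time-boundary terms, i.e.\ the three integrals of the form $\int_0^1[\,\cdot\,]_{t=0}^{t=T}dx$ (the ones carrying $w^2\varphi_t/a$, $(\varphi_x)^2w^2$, and $(w_x)^2-\lambda w^2/(2ab)$): since $w=e^{s\varphi}v\in\mathcal V$ and, by the definition of $\mathcal V$ and the construction of $\varphi=\Theta\psi$ with $\Theta(t)=[t(T-t)]^{-4}\to+\infty$ as $t\to0^+,T^-$, the factor $e^{2s\varphi}$ forces $w$ and all the listed quantities to vanish at $t=0$ and $t=T$; hence all the $t$-boundary contributions are $0$. (One should note here that $w(0,\cdot)=w(T,\cdot)=0$ is exactly recorded in \eqref{1'}.)

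Next I would treat the spatial-boundary terms, i.e.\ the integrals $\int_0^T[\,\cdot\,]_{x=0}^{x=1}dt$. Because (Dbc) hold, $w(t,0)=w(t,1)=0$ for all $t$, so every term containing an undifferentiated factor $w^2$ or $ww_x$ drops: this kills the $s^2\varphi_t\varphi_x w^2$, $s^3a(\varphi_x)^3w^2$, $s\lambda\varphi_x w^2/b$, and $sa\varphi_{xx}ww_x$ contributions at once, and likewise the $[w_xw_t]_{x=0}^{x=1}$ term vanishes since $w_t(t,0)=w_t(t,1)=0$ (differentiate the boundary condition in $t$). The only survivor is $-s\int_0^T[\varphi_x a (w_x)^2]_{x=0}^{x=1}dt$. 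Into this I substitute $\varphi=\Theta\psi$ and $\psi'(x)=d_1\frac{x-x_0}{a(x)}e^{R(x-x_0)^2}$ from \eqref{c_1}, so that $a\varphi_x=a\Theta\psi'=d_1\Theta(x-x_0)e^{R(x-x_0)^2}$; this produces exactly
\[
-sd_1\int_0^T\Theta(t)\Big[(x-x_0)e^{R(x-x_0)^2}(w_x)^2\Big]_{x=0}^{x=1}dt,
\]
as claimed.

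The bookkeeping is routine; the one point that requires a little care—and is the main thing to get right—is the justification that the boundary traces at $x=x_0$ (the interior degeneracy point) do not contribute. The computation of $\langle L_s^+w,L_s^-w\rangle$ in Lemma~\ref{lemma1} was carried out by integrations by parts on $(0,1)$, and since the integrand of $\langle L_s^+w,L_s^-w\rangle_{L^2_{1/a}(Q_T)}$ has a weight $1/a$ that blows up at $x_0$, one must check that in splitting $(0,1)=(0,x_0)\cup(x_0,1)$ no nonzero trace appears at $x=x_0$. This is where the degeneracy assumptions and Lemma~\ref{Lemma 2.1} enter: for $u\in\mathcal W$ one has $u/\sqrt{ab}\in L^2$ and, as recorded in the remark preceding Lemma~\ref{green}, $u\in\mathcal K_{a,b}$, so by Lemma~\ref{leso}(2) $u(x_0)=0$; combined with item~1 of Lemma~\ref{Lemma 2.1} (the vanishing of $|x-x_0|^\gamma/a$ at $x_0$ for $\gamma>K_1$), every boundary term evaluated at $x=x_0^{\pm}$ tends to zero. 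I would invoke Lemma~\ref{green} (the integration-by-parts formula valid on $D(A_i)\times\mathcal K$, which already has these cancellations built in) rather than re-deriving them. After all of this only the $x=0$ and $x=1$ traces of $-sa\varphi_x(w_x)^2$ remain, and the proof is complete.
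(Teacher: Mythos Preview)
Your proof is correct and follows essentially the same approach as the paper: use $w(0,\cdot)=w(T,\cdot)=0$ to kill the time-boundary terms, use the Dirichlet condition $w(t,0)=w(t,1)=0$ to kill every spatial boundary term containing an undifferentiated $w$, and then substitute $a\varphi_x=d_1\Theta(x-x_0)e^{R(x-x_0)^2}$ into the sole survivor. Your final paragraph about traces at $x=x_0$ is extraneous here---those cancellations belong to the derivation of Lemma~\ref{lemma1} (and are handled by Lemma~\ref{green}), whereas in Lemma~\ref{lemma4} the boundary terms are already presented as $[\,\cdot\,]_{x=0}^{x=1}$, so no interior trace needs to be examined.
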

\begin{proof}
As in \cite[Lemma 4.4]{fm1}, using the definition of $\varphi$ and the boundary
conditions on $w$, one has that the boundary terms in
\eqref{D&BT}, without considering the terms $\ds\lambda\int_0^1\left[\frac{1}{2ab}w^2\right]_{t=0}^{t=T} dx$ and  $\displaystyle s
\lambda \int_0^T
\left[\frac{\varphi_x}{b}w^2\right]_{x=0}^{x=1}dt$, reduce to
\[-sd_1\int_0^T\Theta(t)\Big[(x-x_0)e^{R(x-x_0)^2}(w_x)^2\Big]_{x=0}^{x=1}dt.
\]
Moreover, since $w \in \mathcal{V}$, $ w\in C\big([0, T];{\cal K}
\big)$; thus $w(0, x)$, $w(T,x)$ are well defined, and using again
the boundary conditions of $w$, we get that
\[
\int_0^1\left[\frac{1}{2ab}w^2\right]_{t=0}^{t=T} dx=0.
\]
Now, consider the last boundary term $\displaystyle s \lambda
\int_0^T \left[\frac{\varphi_x}{b}w^2\right]_{x=0}^{x=1}dt$. Using
the definition of $\varphi$, this term becomes $\displaystyle s
\lambda \int_0^T
\left[\Theta\frac{\psi'}{b}w^2\right]_{x=0}^{x=1}dt. $  By
definition of $\psi$, the
function $\displaystyle \Theta \frac{\psi'}{b}w^2$ is bounded in $(0,T)$.
Thus, by the boundary conditions on $w$, one has
\[
s \lambda \int_0^T
\left[\Theta\frac{\psi'}{b}w^2\right]_{x=0}^{x=1}dt=0.
\]
\end{proof}

Now, the crucial step is to prove the following estimate:
\begin{Lemma}\label{lemma2}
Assume Hypothesis $\ref{Ass02}$. Then there exist two positive constants $C$ (depending on $\lambda$) and
$s_0$ such that for
all $s \ge s_{0}$ the distributed terms of \eqref{D&BT} satisfy the
estimate
\[
\begin{aligned}
&\frac{s}{2} \int_{Q_T}
\frac{\varphi_{tt}}{a} w^2dxdt 
- 2s^2 \int_{Q_T} \varphi_x \varphi_{tx}w^2dxdt  +s
\int_{Q_T}(2 a\varphi_{xx} + a'\varphi_x)(w_x)^2 dxdt
\\&+ s^3 \int_{Q_T}(2a \varphi_{xx} + a'\varphi_x)
(\varphi_x)^2 w^2dxdt
+s\int_{Q_T}(a\varphi_{xx})_{x}ww_x dxdt-s\lambda
\int_{Q_T} \frac{\varphi_xb'}{b^2} w^2 dxdt\\&\ge \frac{C}{2}s\int_{Q_T} \Theta (w_x)^2 dxdt +
\frac{C^3}{2}s^3 \int_{Q_T}\Theta^3 \left(\frac{x-x_0}{a} \right)^2w^2
dxdt.
\end{aligned}
\]
\end{Lemma}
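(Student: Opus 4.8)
The plan is to run the standard Carleman computation on the identity \eqref{D&BT}: estimate each of its six distributed terms from below, keep the two ``good'' terms $s\int_{Q_T}\Theta(w_x)^2$ and $s^3\int_{Q_T}\Theta^3(\frac{x-x_0}{a})^2w^2$, and absorb everything else by taking $s$ large and using the Hardy--Poincar\'e inequalities of Section \ref{secPR}, following \cite{fm1} and \cite{fm2}.

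First I would record the elementary facts about the weight. From \eqref{c_1}, $\psi'(x)=d_1\dfrac{x-x_0}{a(x)}e^{R(x-x_0)^2}$, and a direct differentiation gives
\[
2a\psi''+a'\psi'=d_1e^{R(x-x_0)^2}\Bigl(2-\frac{(x-x_0)a'}{a}+4R(x-x_0)^2\Bigr).
\]
Since $(x-x_0)a'\le K_1a$ a.e. with $K_1<2$ (Hypotheses \ref{Ass0}--\ref{Ass01_1}; this is the constant $K$ of \eqref{c_1}) and $e^{R(x-x_0)^2}\ge1$, this quantity is $\ge(2-K_1)d_1>0$. Recalling $\varphi=\Theta\psi$, $\varphi_x=\Theta\psi'$ and $\varphi_{xx}=\Theta\psi''$, this gives
\[
2a\varphi_{xx}+a'\varphi_x\ge(2-K_1)d_1\,\Theta,\qquad (2a\varphi_{xx}+a'\varphi_x)(\varphi_x)^2\ge(2-K_1)d_1^3\,\Theta^3\Bigl(\frac{x-x_0}{a}\Bigr)^2,
\]
so the third and fourth distributed terms in \eqref{D&BT} already dominate $(2-K_1)d_1\,s\int_{Q_T}\Theta(w_x)^2$ and $(2-K_1)d_1^3\,s^3\int_{Q_T}\Theta^3(\frac{x-x_0}{a})^2w^2$, respectively. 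I would also note $-d_1d_2\le\psi<0$, $0<\inf_{(0,T)}\Theta$, and $|\Theta'|\le C\Theta^{5/4}$, $|\Theta''|\le C\Theta^{3/2}$.

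Then I would absorb the four remaining terms. The second term, $-2s^2\int_{Q_T}\varphi_x\varphi_{tx}w^2=-2s^2\int_{Q_T}\Theta\Theta'(\psi')^2w^2$, is bounded in modulus by $Cs^2\int_{Q_T}\Theta^{9/4}(\frac{x-x_0}{a})^2w^2$, hence by a multiple of the good $s^3$-term that vanishes as $s\to\infty$. The last term, $-s\lambda\int_{Q_T}\frac{\varphi_xb'}{b^2}w^2$, is nonnegative when $\lambda<0$ by Hypothesis $\ref{Ass02}.4$ (which makes $\varphi_xb'\ge0$), and for $\lambda>0$ it is estimated via $(x-x_0)b'\le K_2b$ and absorbed like the lower-order terms below. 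For $s\int_{Q_T}(a\varphi_{xx})_xww_x$, Hypothesis $\ref{Ass02}.2$ guarantees $(a\psi'')_x\in L^\infty(0,1)$, so this term is $\le Cs\int_{Q_T}\Theta|w||w_x|$, and Young's inequality splits it into a piece absorbed by the good first-order term (for a small parameter $\delta$) plus $\frac C\delta s\int_{Q_T}\Theta w^2$. Finally there remain this last integral and $\frac s2\int_{Q_T}\frac{\varphi_{tt}}{a}w^2$, which is bounded below by $-Cs\int_{Q_T}\frac{\Theta^{3/2}}{a}w^2$ (using $|\psi|\le d_1d_2$). These I would handle by splitting $Q_T$ around $x_0$: on $\{|x-x_0|\ge\rho\}$, where $a\ge a_\rho>0$ and $(\frac{x-x_0}{a})^2\ge c_\rho>0$, they are absorbed by the good $s^3$-term for $s$ large; on $\{|x-x_0|<\rho\}$, where $w(t,x_0)=0$ (by the Dirichlet condition and, when $\cK=\cK_{a,b}$, Lemma \ref{leso}) and $\frac{(x-x_0)^2}{a}$ is bounded (Lemma \ref{Lemma 2.1} with $\gamma=2>K_1$, so $\frac1a\le\frac{C_\rho}{(x-x_0)^2}$), the weighted Hardy--Poincar\'e inequalities of Subsections \ref{HPDBC}--\ref{HPNBC} turn $\int\frac{w^2}{(x-x_0)^2}$ (equivalently $\int\frac{w^2}{ab}$) into $\int(w_x)^2$, which is absorbed by the good first-order term once $\rho$ is fixed small and $s$ is then taken large. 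Summing all the estimates yields the asserted lower bound; the (Nbc) case is handled the same way, using $w'(0)=w'(1)=0$ and the corresponding inequalities of Subsection \ref{HPNBC}.

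The hard part is this last step. Near $x_0$ the weight $\Theta^{3/2}/a$ coming from $\varphi_{tt}$ is not pointwise comparable to the good weight $\Theta^3(\frac{x-x_0}{a})^2$, so the estimate of the singular zero-order terms must be routed through the Hardy--Poincar\'e inequalities of Section \ref{secPR}, and the precise inequality available --- with weight $\frac1{ab}$ versus $\frac1{(x-x_0)^2}$, and on $\cK_a$ versus $\cK_{a,b}$ --- depends on which of the (WWD), (SSD), (WSD), (SWD) regimes one is in; this is exactly why the four-case split of Hypothesis \ref{Ass03_new} (hence of Hypothesis $\ref{Ass02}.1$) is imposed, and why the auxiliary Hypotheses $\ref{Ass02}.2$--$\ref{Ass02}.4$ are needed ($\frac{(x-x_0)a'}{a}\in W^{1,\infty}$ for the boundedness of $(a\psi'')_x$, the monotonicity of $a/|x-x_0|^\theta$ when $K_1$ is large, and the sign of $(x-x_0)b'$ when $\lambda<0$). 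Arranging the constants so that every bad term is strictly absorbed, uniformly over the four regimes and for both boundary conditions, is the technical heart of the argument, and follows \cite{fm1} and \cite{fm2} closely.
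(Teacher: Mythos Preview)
Your sketch is correct and follows essentially the same route as the paper, which in fact omits the proof and refers to \cite[Lemma 4.3]{fm1} and \cite[Lemma 3.3]{fm2}, treating explicitly only the extra singular term $-s\lambda\int_{Q_T}\frac{\varphi_xb'}{b^2}w^2$. Two small remarks are worth making. First, for $\lambda>0$ the paper does not lump this singular term with the other lower-order terms and the splitting around $x_0$; instead it uses $(x-x_0)b'\le K_2b$ to bound it below by $-s\lambda d_1K_2\int_{Q_T}\frac{\Theta}{ab}e^{R(x-x_0)^2}w^2$, applies the global Hardy--Poincar\'e inequality $\int\frac{w^2}{ab}\le C^*\int(w_x)^2$ of Section~\ref{secPR} directly (here $w(t,\cdot)\in\cK$), and then absorbs the resulting $-sd_1K_2\int_{Q_T}\Theta(w_x)^2$ into the good first-order term, the condition $\lambda C^*<1$ from Hypothesis~\ref{Ass03} being exactly what makes this absorption go through. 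Your route via the near/far splitting also works but you should check that the constant you obtain is indeed absorbed---the paper's path makes the role of the smallness of $\lambda$ explicit. Second, note that in the paper this lemma is placed inside the (Dbc) subsection and is only used there; the (Nbc) Carleman estimate is obtained afterwards by a reflection-and-cut-off argument that reduces to the (Dbc) result, so your final sentence about handling (Nbc) ``the same way'' is unnecessary for the lemma as stated.
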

We omit the proof since it follows as in \cite{fm2}. We observe only that, if $\lambda <0$, the thesis follows immediately by \cite[Lemma 4.3]{fm1} via Hypothesis \ref{Ass02}.4. Otherwise, if $\lambda > 0$, by definition of
$\varphi$ and by the assumption on $b$, one has
\[
\begin{aligned}
-s\lambda \int_{Q_T} \frac{\varphi_xb'}{b^2} w^2 dxdt &=
-s\lambda \int_{Q_T} \Theta \frac{\psi'b'}{b^2} w^2
dxdt\\&=-s \lambda d_1 \int_{Q_T} \Theta
\frac{(x-x_0)b'}{ab^2}e^{R(x-x_0)}w^2 dxdt\\
&\ge -s\lambda d_1K_2 \int_{Q_T} \frac{\Theta}{ab}e^{R(x-x_0)}w^2 dxdt.
\end{aligned}
\]
Since $w(t,\cdot)\in {\cal K}$ for every $t\in[0,1]$, for $w\in {\cal V}$, we get
\[
\int_{Q_T} \frac{\Theta}{ab}w^2 e^{R(x-x_0)}dxdt \le \int_{Q_T} \frac{\Theta}{ab}w^2dxdt\le 
C^* \int_{Q_T} \Theta (w_x)^2 dxdt.
\]
Hence,
\[
-s\lambda \int_{Q_T} \frac{\varphi_xb'}{b^2} w^2 dxdt \ge
-s\lambda d_1K_2 C^* \int_{Q_T} \Theta
(w_x)^2 dxdt \ge -sd_1K_2  \int_{Q_T} \Theta
(w_x)^2 dxdt.
\]
Again the thesis follows by  \cite[Lemma 4.3]{fm1}, as in \cite[Lemma 3.3]{fm2}.
\vspace{0.5cm}

From Lemma \ref{lemma1}, Lemma \ref{lemma4} and Lemma \ref{lemma2},
we deduce immediately that there exist two positive constants $C$
and $s_0$, such that for
all $s \ge s_0$,
\begin{equation}\label{D&BT1}
\begin{aligned}
\int_{Q_T}\frac{1}{a}L^+_s w L^-_s w dxdt &\ge Cs\int_{Q_T} \Theta
(w_x)^2 dxdt\\&+ Cs^3 \int_{Q_T}\Theta^3 \left(\frac{x-x_0}{a}\right)^2
w^2 dxdt\\
&-sd_1\int_0^T\Theta(t)\Big[(x-x_0)e^{R(x-x_0)^2}(w_x)^2\Big]_{x=0}^{x=1}dt.
\end{aligned}
\end{equation}

Thus, a straightforward consequence of \eqref{stimetta} and
\eqref{D&BT1} is the next result.
\begin{Lemma}
Assume Hypothesis $\ref{Ass02}$. Then, there exist two
positive constants $C$ (depending on $\lambda$) and $s_0$, such that for all $s \ge s_0$, \begin{equation}\label{2stelle}
\begin{aligned}
   & s\int_{Q_T} \Theta (w_x)^2dxdt  + s^3
\int_{Q_T}\Theta^3 \left(\frac{x-x_0}{a}\right)^2 w^2 dxdt\\&\le
    C\left(\int_{Q_T} h^2 \frac{e^{2s\varphi(t,x)}}{a}dxdt+sd_1\int_0^T\Theta(t)\Big[(x-x_0)e^{R(x-x_0)^2}(w_x)^2\Big]_{x=0}^{x=1}dt
    \right).
    \end{aligned}
    \end{equation}
\end{Lemma}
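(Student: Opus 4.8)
The plan is to read off \eqref{2stelle} directly by combining \eqref{stimetta} with the lower bound \eqref{D&BT1}; once Lemmas \ref{lemma1}, \ref{lemma4} and \ref{lemma2} are available, this is only a bookkeeping step. First I would recall \eqref{stimetta}: since $v\in\mathcal V$ and $\varphi<0$, the function $w=e^{s\varphi}v$ again belongs to $\mathcal V$, hence $L^+_s w$ and $L^-_s w$ lie in $L^2_{\frac{1}{a}}(Q_T)$ and all the norms below are finite; using $\|L^+_s w\|^2_{L^2_{\frac{1}{a}}(Q_T)}+\|L^-_s w\|^2_{L^2_{\frac{1}{a}}(Q_T)}\ge 0$ together with $L^+_s w+L^-_s w=L_s w=h e^{s\varphi}$ one gets
\[
2\langle L^+_s w, L^-_s w\rangle_{L^2_{\frac{1}{a}}(Q_T)}\le \|h e^{s\varphi}\|^2_{L^2_{\frac{1}{a}}(Q_T)}=\int_{Q_T} h^2\frac{e^{2s\varphi}}{a}\,dxdt .
\]
Moreover, by definition of the weighted inner product, $\langle L^+_s w, L^-_s w\rangle_{L^2_{\frac{1}{a}}(Q_T)}=\int_{Q_T}\frac{1}{a}L^+_s w\,L^-_s w\,dxdt$.

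Next I would insert the lower bound \eqref{D&BT1} into the left-hand side above. Multiplying \eqref{D&BT1} by $2$, substituting, and transferring the single boundary term to the right yields
\[
2Cs\int_{Q_T}\Theta(w_x)^2\,dxdt+2Cs^3\int_{Q_T}\Theta^3\Big(\frac{x-x_0}{a}\Big)^2 w^2\,dxdt\le \int_{Q_T} h^2\frac{e^{2s\varphi}}{a}\,dxdt+2sd_1\int_0^T\Theta(t)\Big[(x-x_0)e^{R(x-x_0)^2}(w_x)^2\Big]_{x=0}^{x=1}dt ,
\]
valid for all $s\ge s_0$, with $C$ and $s_0$ the constants from \eqref{D&BT1} (equivalently from Lemma \ref{lemma2}). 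Dividing through by $2C$ and renaming the constant — taking the new $C$ to be, say, $\max\{1/(2C),1/C\}$ — produces exactly \eqref{2stelle}.

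There is no real obstacle at this step: the analytically substantial work has already been carried out, namely the integration by parts of Lemma \ref{lemma1}, the vanishing of the time-traces and of the singular boundary term $s\lambda\int_0^T[\varphi_x w^2/b]_{x=0}^{x=1}dt$ established in Lemma \ref{lemma4}, and — above all — the sign analysis of the distributed terms in Lemma \ref{lemma2}, where Hypothesis \ref{Ass02} and the Hardy--Poincar\'e constant $C^*$ are used (in particular to absorb the singular contribution $-s\lambda\int_{Q_T}\varphi_x b' b^{-2}w^2\,dxdt$ into $s\int_{Q_T}\Theta(w_x)^2\,dxdt$). The only things to keep track of are that the constant $C$ in \eqref{2stelle} inherits the dependence on $\lambda$ coming from Lemma \ref{lemma2}, and that $s_0$ must be at least as large as the thresholds appearing in Lemma \ref{lemma2} and in \eqref{D&BT1}.
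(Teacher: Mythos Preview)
Your proposal is correct and matches the paper's approach exactly: the paper states this lemma as ``a straightforward consequence of \eqref{stimetta} and \eqref{D&BT1}'' and gives no further details, which is precisely the combination you carry out. Your bookkeeping of the constants and the remark on the $\lambda$-dependence inherited from Lemma~\ref{lemma2} are accurate.
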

Recalling the definition of $w$, we have $v= e^{-s\varphi}w$ and
$v_{x}= -s\Theta \psi'e^{-s\varphi}w + e^{-s\varphi}w_{x}$. Thus,
substituting in \eqref{2stelle}, Theorem \ref{Cor1} follows if (Dbc) hold.

\subsection{Proof of Theorem  \ref{Cor1}  if (Nbc) hold} In this case we will proceed as in \cite{bfm1}, using Theorem \ref{Cor1} in the case of Dirichlet boundary conditions and  a technique based on cut off functions. 
\begin{proof}[Proof of Theorem \ref{Cor1}]
Choose $\alpha, \beta >0$ such that
$\alpha <\beta < x_0$, $1+\beta< 2-x_0$, and consider a
smooth function $\xi: [-1,2] \to \R$ such that $\xi \equiv 1$ in $[-\alpha,1+\alpha]$ and $ \xi \equiv 0$ in $ [-1, -\beta] \cup [1+\beta, 2]$.
Now, we consider
\begin{equation}\label{WN}
W(t,x):= \begin{cases}
v(t, 2-x), &x \in [1,2],\\
v(t,x), & x \in [0,1],\\
v(t,-x), & x \in [-1,0],
\end{cases}
\end{equation}
where $v$ solves 
\begin{equation}\label{P-adjoint-N}
\begin{cases}
v_t +a(x)v_{xx}  + \displaystyle \frac{\lambda}{b(x)}v=h(t,x)=h, & (t,x) \in Q_T,\\
v_x(t,0)=v_x(t,1)=0, &t \in (0,T).
\end{cases}
\end{equation}
Thus $W$ satisfies \eqref{PW}, where $\tilde a$, $\tilde b$ and $\tilde h$ are defined as in \eqref{tildea} and \eqref{tildeh}, respectively. Clearly, in this case $\tilde a$ and $\tilde b$ are $0$ at $x_0$ and, as before, $\tilde a, \tilde b$ belong to $W^{1,1}(-1,2)$ or  to $W^{1,\infty}(-1,2)$, if $a, b$ belong to  $W^{1,1}(0,1)$ or  to $W^{1,\infty}(0,1)$, respectively.
Now, set $Z:= \xi W$ and take $\delta >0$ such that $\beta +\delta<x_0$ and $1+\beta+ \delta < 2-x_0$. Clearly, $-x_0 < -\beta-\delta$. Then $Z$ solves
\[
\begin{cases}
Z_t + \tilde a Z_{xx} + \ds\lambda \frac{Z}{\tilde b }=H, & (t,x) \in (0,T) \times (-\beta-\delta,1+\beta+\delta),\\
Z(t,-\beta-\delta)=Z(t,1+\beta+\delta)=0, &  t \in (0,T),\\
\end{cases}
\]
 with $H:=\xi \tilde h+ \tilde a (\xi _{xx} W + 2\xi _x  W_x)$. Observe that $Z_x(t, -\beta-\delta)=Z_x(t, 1+\beta+\delta)=0$ and, by the assumption on $a$ and the fact that $\xi_x $, $\xi_{xx}$ are supported in $[-\beta,-\alpha]\cup[1+\alpha, 1+\beta]$, $H\in L^2((0,T); L^2_{\frac{1}{\tilde a}} (-\beta-\delta,1+\beta+\delta)).$
Now, define
$
\tilde \varphi(t,x) := \Theta(t) \tilde \psi (x),
$
where
\begin{equation}\label{tildepsi_nondiv}
\tilde \psi(x) := \begin{cases} \displaystyle\psi(2-x) =d_1\left[\int_{2-x_0}^x \frac{t-2+x_0}{\tilde
a(t)}e^{R(2-t-x_0)}dt -d_2\right], & x\in [1,2],\\
\psi(x), & x \in [0,1],\\
\displaystyle \psi(-x)= d_1\left[\int_{-x_0}^x \frac{t+x_0}{\tilde
a(t)}e^{R(-t-x_0)}dt -d_2\right], & x \in [-1,0].\end{cases}
\end{equation}
Thus, we can apply the
analogue of Theorem \ref{Cor1} with (Dbc) on $(-\beta-\delta,1+\beta+\delta)$ in place of
$(0,1)$ and with weight $\tilde \varphi$, obtaining that there exist
two positive constants $C$ and $s_0$ ($s_0$ sufficiently large), such that $Z$
satisfies, for all $s \ge s_0$,
\[
\begin{aligned}
&\int_0^T\int_{-\beta-\delta}^{1+\beta+\delta} \left(s\Theta (Z_x)^2 + s^3 \Theta^3
\left(\frac{x-x_0}{\tilde a}\right)^2 Z^2\right)e^{2s\tilde \varphi}dxdt\\
&\le C\left(\int_0^T\int_{-\beta-\delta}^{1+\beta+\delta}  H^{2}\frac{e^{2s\tilde \varphi}}{\tilde a}dxdt +
sd_1\int_0^T\left[ \Theta e^{R(x-x_0)^2}(x-x_0)(Z_x)^2
\right]_{x=-\beta-\delta}^{x=1+\beta+\delta}dt\right)\\
& = C\int_0^T\int_{-\beta-\delta}^{1+\beta+\delta}  H^{2}\frac{e^{2s\tilde \varphi}}{\tilde a}dxdt.
\end{aligned}
\]
By definition of $\xi$, $W$ and $Z$, we have
\[
\begin{aligned}
&\int_0^T\int_0^1 \left(s\Theta (v_x)^2 + s^3 \Theta^3
\left(\frac{x-x_0}{a} \right)^2v^2\right)e^{2s\varphi}dxdt
\\&=\int_0^T\int_0^1 \left(s\Theta (Z_x)^2 + s^3 \Theta^3
\left(\frac{x-x_0}{a} \right)^2Z^2\right)e^{2s\tilde \varphi}dxdt\\
& \le \int_0^T\int_{-\beta-\delta}^{1+\beta+\delta} \left(s\Theta (Z_x)^2 + s^3 \Theta^3
\left(\frac{x-x_0}{a} \right)^2 Z^2\right)e^{2s\tilde \varphi}dxdt\le C\int_0^T\int_{-\beta-\delta}^{1+\beta+\delta}  H^{2}\frac{e^{2s\tilde \varphi}}{\tilde a}dxdt.
\end{aligned}
\]
Using the fact that $\xi_x $ and $\xi_{xx}$ are supported in $[-\beta,-\alpha]\cup[1+\alpha, 1+\beta]$, it follows
\[
\begin{aligned}
&\int_0^T\int_{-\beta-\delta}^{1+\beta+\delta}  H^{2}\frac{e^{2s\tilde \varphi}}{\tilde a} dxdt = \int_0^T\int_{-\beta-\delta}^{1+\beta+\delta}(\xi \tilde h+ \tilde a (\xi _{xx} W + 2\xi _x  W_x))^2\frac{e^{2s\tilde \varphi}}{\tilde a}dxdt\\
&\le C\left( \int_0^T\int_{-\beta-\delta}^{1+\beta+\delta} \tilde h^2\frac{e^{2s\tilde \varphi}}{\tilde a}dxdt + \int_0^T\left(\int_{-\beta}^{-\alpha}+ \int_{1+\alpha}^{1+\beta}\right)(W^2 + W_x^2)e^{2s\tilde \varphi}dxdt \right)\\
&\le C \int_0^T \int_{-1}^2\left(
\frac{\tilde h^2}{\tilde a} + W_x^2+ W^2\right)e^{2s\tilde \varphi}dxdt.
\end{aligned}
\]
Hence, using the definitions of $\tilde \varphi$, $\tilde a$, $\tilde h$ and $W$, it results
\begin{equation}\label{sopra0}
\begin{aligned}
&\int_0^T\int_0^1 \left(s\Theta (v_x)^2 + s^3 \Theta^3
\left(\frac{x-x_0}{a}\right)^2 v^2\right)e^{2s\varphi}dxdt \le C \int_0^T \int_{-1}^2\left(
\frac{\tilde h^2}{\tilde a} + W_x^2+ W^2\right)e^{2s\tilde \varphi}dxdt\\
&\le C \int_0^T \int_0^1\left(
\frac{ h^2}{a} +v^2+  \Theta v_x^2\right)e^{2s \varphi}dxdt,
\end{aligned}
\end{equation}
for a positive constant $C$. Hence, we can choose $s_0$ so large that, for all $s \ge s_0$,
\[
\begin{aligned}
&\int_0^T\int_0^1 \left(s\Theta (v_x)^2 + s^3 \Theta^3
\left(\frac{x-x_0}{a}\right)^2 v^2\right)e^{2s\varphi}dxdt
\\&\le C \left(\int_0^T  \int_0^1
h^2 \frac{e^{2s \varphi}}{a}dxdt+ \int_0^T \int_0^1 v^2 e^{2s\varphi} dxdt\right),
\end{aligned}
\]
for a positive constant $C$. 

\vspace{0.3cm}
Assume now that $\omega$ is a strict subset of $(0,1)$ such that $x_0 \in \omega$. Then, by \eqref{carN},
\[
\begin{aligned}
&\int_{Q_T} \left(s\Theta (v_x)^2 + s^3 \Theta^3
\left(\frac{x-x_0}{a} \right)^2v^2\right)e^{2s\varphi}dxdt\\
& \le C\left(\int_{Q_T} h^{2}\frac{e^{2s\varphi}}{a}dxdt + \int_{Q_T} v^{2}e^{2s\varphi}dxdt\right)\\
& =  C \left(\int_{Q_T}
 h^2\frac{e^{2s\varphi}}{a} dxdt +  \int_0^T \int_{(0,1) \setminus \omega} v^2e^{2s  \varphi}dxdt + \int_0^T \int_{\omega} v^2e^{2s  \varphi}dxdt \right)\\
 & \le C \left(\int_{Q_T} h^{2}\frac{e^{2s\varphi}}{a}dxdt +\int_0^T \int_{(0,1) \setminus \omega}  \Theta^3\left(\frac{x-x_0}{a}\right)^2 v^2e^{2s  \varphi}dxdt + \int_0^T \int_{\omega}v^2e^{2s  \varphi}dxdt \right)\\
& \le C \left(\int_{Q_T} h^{2}\frac{e^{2s\varphi}}{a}dxdt +\int_{Q_T}  \Theta^3\left(\frac{x-x_0}{a}\right)^2 v^2e^{2s  \varphi}dxdt + \int_0^T \int_{\omega}v^2e^{2s  \varphi}dxdt \right).\\
\end{aligned}
\]
Hence, we can choose $s_0$ so large that, for all $s \ge s_0$ and for a positive constant $C$:
\[
\int_{Q_T} \left(s\Theta (v_x)^2 + s^3 \Theta^3
\left(\frac{x-x_0}{a}\right)^2 v^2\right)e^{2s\varphi}dxdt\\
\le C \left(\int_{Q_T} h^{2}\frac{e^{2s\varphi}}{a}dxdt  + \int_0^T \int_{\omega}v^2e^{2s  \varphi}dxdt \right).
\]
\end{proof}

Observe that \eqref{carN} and \eqref{carN_1} are the analogous estimates proved in \cite{bfm1} when $\lambda=0$.

\section{Applications to observability inequality}\label{sec5}
In this section we shall apply the just established Carleman inequalities to observability issues.
For this, we assume that the control set $\omega$ satisfies
the following assumption:
\begin{Assumptions} \label{ipotesiomega}
The subset $\omega$ is such that
\begin{itemize}
\item it is an interval containing the degeneracy point:
\begin{equation}\label{omega1}
\omega=(\alpha,\beta) \subset (0,1) \mbox{ is such that $x_0 \in
\omega$},
\end{equation}
or
\item it is an interval lying on one
side of the degeneracy point:
\begin{equation}\label{omega}
\omega=(\alpha,\beta) \subset (0,1) \mbox{ is such that $x_0\not \in
\bar \omega$}.
\end{equation}
\end{itemize}
\end{Assumptions}

On the functions $a$, $b$ and on the constant $\lambda$ we make the following
assumptions:
\begin{Assumptions}\label{Ass04} Hypothesis $\ref{Ass02}$ is satisfied. Moreover,
if Hypothesis $\ref{Ass0}$ or $\ref{Ass0_1}$ holds,  there exist
two functions $\fg \in L^\infty_{\rm loc}([0,1]\setminus \{x_0\})$, $\fh \in W^{1,\infty}_{\rm loc}([0,1]\setminus \{x_0\};L^\infty(0,1))$ and
two strictly positive constants $\fg_0$, $\fh_0$ such that $\fg(x) \ge \fg_0$ and
\begin{equation}\label{5.3'}
\frac{a'(x)}{2\sqrt{a(x)}}\left(\int_x^B\fg(t) dt + \fh_0 \right)+ \sqrt{a(x)}\fg(x) =\fh(x,B)\quad \text{for a.e.} \; x,B \in [0,1]
\end{equation}
with $x<B<x_0$ or $x_0<x<B$.
\end{Assumptions}

\begin{Assumptions}\label{ipa}
If $x_0 \not \in \omega$, (Nbc) hold and $K_1+ K_2 <1$, then
\[
\max_{[0,1]}a < \ds \frac{1}{C_{HP,1}},
\]
where $C_{HP,1}$ is the Hardy--Poincar\'e constant of Corollary \ref{HPN1}.
\end{Assumptions}

\begin{Remark}
Since we require identity \eqref{5.3'} far from $x_0$, once $a$ is given, it is easy to find $\fg,\fh,\fg_0$ and $\fh_0$ with the desired properties (see \cite[Remark 4]{bfm1} for some examples).
\end{Remark}

Now, we associate to problem \eqref{linear} the homogeneous adjoint problem
\begin{equation}\label{h=0}
\begin{cases}
v_t +av_{xx} +\displaystyle \frac{ \lambda}{b(x)}v= 0, &(t,x) \in
Q_T,
\\[5pt]
Bv(0)=Bv(1) =0, & t \in (0,T),
\\[5pt]
v(T,x)= v_T(x),
\end{cases}
\end{equation}
where $T>0$ is given and $v_T(x) \in L^2_{\frac{1}{a}}(0,1)$. By the Carleman estimates given in Theorem
\ref{Cor1}, we will deduce the following observability inequality
for all the degenerate cases:
\begin{Proposition}
\label{obser.} Assume Hypotheses $\ref{ipotesiomega}$ -- $\ref{ipa}$.
  There exists a positive constant $C_T$ such that every solution $v
\in  \mathcal U$ of
\eqref{h=0} satisfies
 \begin{equation}\label{obser1.}
\int_0^1v^2(0,x)\frac{1}{a} dx \le C_T\int_0^T \int_{\omega}v^2(t,x)\frac{1}{a}dxdt,
\end{equation}
where
\begin{equation}\label{U}
\mathcal U:= C\big([0, T]; L^2_{\frac{1}{a}}(0,1)\big) \cap L^2 (0,T; \cK).
\end{equation}
\end{Proposition}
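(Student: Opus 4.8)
The plan is to derive \eqref{obser1.} from the Carleman estimates of Theorem \ref{Cor1}, combined with the dissipativity of the operators $A_i$ proved in Theorem \ref{theorem1} and a Caccioppoli-type inequality (obtained in the usual way, by testing the adjoint equation against $\xi^2 v$ for a suitable cut-off $\xi$). Since Theorem \ref{Cor1} requires the solution to lie in the space $\mathcal V$ of \eqref{v1}, while Proposition \ref{obser.} only assumes $v\in\mathcal U$, I would first prove \eqref{obser1.} for $v_T\in D(A_i)$ — so that, by the regularity statement in Theorem \ref{theorem1}, the corresponding solution $v$ of \eqref{h=0} belongs to $\mathcal V$ — and then recover the general case $v_T\in L^2_{\frac{1}{a}}(0,1)$ by approximating $v_T$ with elements of $D(A_i)$ and passing to the limit, both sides of \eqref{obser1.} being continuous with respect to the $\mathcal U$-convergence of $v$.

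Next I would reduce to an interior estimate by energy dissipation: since $v$ solves $v_t=-A_iv$ with $A_i$ nonpositive and self-adjoint in $L^2_{\frac{1}{a}}(0,1)$, one has $\frac{d}{dt}\|v(t)\|^2_{L^2_{\frac{1}{a}}(0,1)}=-2\langle A_iv(t),v(t)\rangle_{L^2_{\frac{1}{a}}(0,1)}\ge 0$, so $t\mapsto\|v(t)\|^2_{L^2_{\frac{1}{a}}(0,1)}$ is nondecreasing; in particular $\|v(0)\|^2_{L^2_{\frac{1}{a}}(0,1)}\le\|v(t)\|^2_{L^2_{\frac{1}{a}}(0,1)}$ for every $t\in[0,T]$, and integrating over $[T/4,3T/4]$ gives
\[
\frac{T}{2}\int_0^1\frac{v^2(0,x)}{a}\,dx\le\int_{T/4}^{3T/4}\int_0^1\frac{v^2(t,x)}{a}\,dxdt.
\]
It then suffices to bound the right-hand side by $C\int_0^T\int_\omega\frac{v^2}{a}\,dxdt$, keeping in mind that on the compact interval $[T/4,3T/4]$ the weights $\Theta$ and $e^{2s\varphi}$ are bounded above and below by positive constants (for $s\ge s_0$ fixed), and that $e^{2s\varphi}\le1$ (recall $\varphi=\Theta\psi<0$) and $a\le\max_{[0,1]}a$ everywhere.

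If $x_0\in\omega$ (case \eqref{omega1}) I would use the Carleman estimate carrying a localized observation term on $\omega$: this is \eqref{carN_1} under (Nbc), and the analogous localized version of \eqref{car} under (Dbc), the latter obtained after absorbing the boundary terms $[(x-x_0)e^{R(x-x_0)^2}(w_x)^2]_{x=0}^{x=1}$ by means of the nondegenerate Carleman estimate of Proposition \ref{classical Carleman} near the two (nondegenerate) endpoints, together with a Caccioppoli-type flux bound. Fixing an open interval $\omega_0$ with $x_0\in\omega_0$ and $\overline{\omega_0}\subset\omega$, on $[0,1]\setminus\omega_0$ one has $\frac{1}{a}\le C\big(\frac{x-x_0}{a}\big)^2$ (both sides being bounded above and below away from $x_0$), so from the Carleman estimate with $h=0$,
\[
\int_{T/4}^{3T/4}\!\int_{[0,1]\setminus\omega_0}\!\frac{v^2}{a}\,dxdt\le C\!\int_{Q_T}\!s^3\Theta^3\Big(\frac{x-x_0}{a}\Big)^2v^2e^{2s\varphi}\,dxdt\le C\!\int_0^T\!\int_\omega v^2e^{2s\varphi}\,dxdt\le C\!\int_0^T\!\int_\omega\frac{v^2}{a}\,dxdt,
\]
while $\int_{T/4}^{3T/4}\int_{\omega_0}\frac{v^2}{a}\,dxdt\le\int_0^T\int_\omega\frac{v^2}{a}\,dxdt$ trivially; adding the two yields the wanted bound, hence \eqref{obser1.}. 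If instead $x_0\notin\overline\omega$ (case \eqref{omega}), the Carleman estimate only provides the \emph{global} term $\int_{Q_T}v^2e^{2s\varphi}\,dxdt$, and an extra localization step is needed: one introduces an interval around $x_0$ and an overlapping interval around $\omega$ on which $a,b$ are nondegenerate, applies on the first the interior estimate of Theorem \ref{Cor1} (read on a subinterval, cf.\ Remark \ref{generale}) and on the second the classical estimate of Proposition \ref{classical Carleman} (cf.\ Remark \ref{rembo}), and glues them through a cut-off function, the first-order terms produced by differentiating the cut-off being absorbed via the Caccioppoli inequality; this is where Hypotheses \ref{Ass04} and \ref{ipa} and the auxiliary functions $\fg,\fh$ are used. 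Again one ends up with $\int_{T/4}^{3T/4}\int_0^1\frac{v^2}{a}\,dxdt\le C\int_0^T\int_\omega\frac{v^2}{a}\,dxdt$.

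The main difficulty is precisely this last gluing: the interior degenerate/singular Carleman estimate is effective near $x_0$ but carries no information near $\omega$, while the classical estimate is effective near $\omega$ but blows up at $x_0$, so one must match the two global weights across the overlap region and, simultaneously, control through a Caccioppoli-type inequality the $v_x^2$ contributions produced by the cut-off — this is the step where the delicate choice of weights and Hypotheses \ref{Ass04}--\ref{ipa} really enter. In the Dirichlet case the same ideas are already needed when $x_0\in\omega$, in order to dispose of the boundary terms on the right-hand side of \eqref{car}.
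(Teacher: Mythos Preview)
Your overall architecture --- density from regular data, a localized Carleman estimate obtained by gluing the degenerate and nondegenerate estimates via cut-offs and Caccioppoli, then an energy-monotonicity argument --- matches the paper's. The difference is in \emph{which} energy you make monotone. You use the $L^2_{\frac{1}{a}}$ norm directly, via nonpositivity of $A_i$; the paper instead multiplies the equation by $\frac{v_t}{a}$ and shows that
\[
t\longmapsto \int_0^1 (v_x)^2(t,x)\,dx - \lambda\int_0^1 \frac{v^2(t,x)}{a(x)b(x)}\,dx
\]
is nondecreasing, hence controls $\int_0^1 (v_x)^2(0,x)\,dx$ by the time-averaged $\int (v_x)^2$, which the Carleman term $\int s\Theta (v_x)^2 e^{2s\varphi}$ bounds directly, and only \emph{then} applies Hardy--Poincar\'e at $t=0$ to pass from $\int (v_x)^2(0,x)\,dx$ to $\int \frac{v^2(0,x)}{a}\,dx$.

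Your variant is cleaner when $x_0\in\omega$, and the splitting $[0,1]=\omega_0\cup([0,1]\setminus\omega_0)$ you describe is correct there. The gap is in the case $x_0\notin\overline\omega$. The gluing procedure you outline is exactly the paper's Lemma \ref{lemma3}; but what it yields is
\[
\int_{Q_T}\Big(s\Theta (v_x)^2 + s^3\Theta^3\Big(\frac{x-x_0}{a}\Big)^2 v^2\Big)e^{2s\varphi}\,dxdt \le C\int_0^T\int_\omega \frac{v^2}{a}\,dxdt,
\]
and from this one does \emph{not} directly obtain $\int_{T/4}^{3T/4}\int_0^1 \frac{v^2}{a}\,dxdt\le C\int_0^T\int_\omega \frac{v^2}{a}\,dxdt$, because near $x_0$ the weight $\frac{1}{a}$ is strictly more singular than $\big(\frac{x-x_0}{a}\big)^2$ (for $a\sim|x-x_0|^{K_1}$ with $K_1<2$). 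What saves you is the $(v_x)^2$ term: on $[T/4,3T/4]$ it controls $\int_{T/4}^{3T/4}\int_0^1 (v_x)^2\,dxdt$, and then one must invoke the Hardy--Poincar\'e inequalities of Section \ref{secPR} (Proposition \ref{HP} or Corollary \ref{HPN1}, applied for each $t$) to deduce $\int_0^1 \frac{v^2}{a}\,dx\le C\int_0^1 (v_x)^2\,dx$ --- this is precisely where Hypothesis \ref{ipa} enters under (Nbc) with $K_1+K_2<1$, since in that regime $v(t,x_0)$ need not vanish and Corollary \ref{HPN1} only gives the $H^1$-norm version. You never state this Hardy--Poincar\'e step, and without it the inequality you claim does not follow. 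Once you insert it, your route and the paper's become essentially equivalent; the paper just applies Hardy--Poincar\'e once at $t=0$ rather than on a time slab, which is why it works with the $H^1$-type energy from the outset.
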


\subsection{Proof of Proposition \ref{obser.}}
We will give some preliminary
results. As a first step, consider the adjoint problem
\[
(P_i) \quad
\begin{cases}
v_t +A_iv= 0, &(t,x) \in Q_T,
\\[5pt]
Bv(0)=Bv(1) =0, & t \in (0,T),
\\[5pt]
v(T,x)= v_T(x) \,\in D({A_i}^2),
\end{cases}
\]
where
\[
D({A_i}^2) = \Big\{u \,\in \,D({ A_i })\;\big|\; { A_i }u \,\in \,D({ A_i })
\;\Big\}.
\]
Observe
that $D({ A _i}^2)$ is densely defined in $D({ A_i})$ (see, for
example, \cite[Lemma 7.2]{b}) and hence in $L^2_{\frac{1}{a}}(0,1)$. As in
\cite{fm1}, define 
\[
\cal Q:=\Big\{ v\text{ is a solution of } (P_i) \Big\}. \]
Obviously (see, for example, \cite[Theorem 7.5]{b}),
\[ \cal{Q}\subset
\mathcal S \subset \mathcal{V} \subset
\cal{U},
\]
where $\mathcal{V}$  and $\mathcal U$ are defined in \eqref{v1} and \eqref{U}, respectively, and
\[
\mathcal{S}:=C^1\big([0,T]\:;\cW \big).
\]
In order to prove Proposition \ref{obser.}, we need the following result:
\begin{Lemma}\label{lemma3}
Assume Hypotheses $\ref{ipotesiomega}$ and $\ref{Ass04}$. Then there exist two positive
constants $C$ and $s_0$ such that every solution $v \in \cal Q$ of
$(P_i)$, $i=1,2$, satisfies, for all $s \ge s_0$,
\[
\int_{Q_T}\left( s \Theta  (v_x)^{2} + s^3 \Theta ^3
\left(\frac{x-x_0}{a}\right)^2 v^{2}\right) e^{{2s\varphi}}  dxdt\le C
\int_0^T\int_{\omega}\frac{v^{2}}{a} dxdt.
\]
Here $\Theta$ and $\varphi$ are as before.
    \end{Lemma}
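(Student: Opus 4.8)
The plan is to start from the Carleman estimates of Theorem~\ref{Cor1}, which apply to any $v\in\mathcal Q$ with $h=0$ (since $v$ solves $(P_i)$, that is, \eqref{P-adjoint} with zero source, and $\mathcal Q\subset\mathcal V$), and to replace their right-hand sides by a constant times $\ds\int_0^T\!\int_\omega v^2/a\,dxdt$. The argument splits according to whether $x_0\in\omega$ or $x_0\notin\bar\omega$ and to the type of boundary conditions, exactly as in \cite{fm1}, \cite{fm2} and \cite{bfm1}.

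The only immediate case is (Nbc) with $x_0\in\omega$: estimate \eqref{carN_1} with $h=0$ already bounds the left-hand side by $\ds C\int_0^T\!\int_\omega v^2e^{2s\varphi}\,dxdt$, and since $\varphi<0$ and $a$ is continuous (hence bounded) on $[0,1]$ we have $v^2e^{2s\varphi}\le(\max_{[0,1]}a)\,v^2/a$ on $\omega$. In all the other cases one starts from \eqref{carN} or \eqref{car} with $h=0$, and must get rid of the ``bad'' term on the right: the whole $\ds\int_{Q_T}v^2e^{2s\varphi}\,dxdt$ in the Neumann case, and the nonnegative boundary terms $\ds sd_1\int_0^T\Theta\big[(x-x_0)e^{R(x-x_0)^2}(w_x)^2\big]_{x=0}^{x=1}dt$ ($w=e^{s\varphi}v$) in the Dirichlet case.

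For the term $\ds\int_{Q_T}v^2e^{2s\varphi}$ one proceeds in three regions. Over $\omega_0$, a fixed interval with $\overline{\omega_0}\subset\omega$, it is bounded by $\ds C\int_0^T\!\int_\omega v^2/a$. On the part of $(0,1)$ at positive distance from both $x_0$ and $\omega_0$ the factor $\big((x-x_0)/a\big)^2$ is bounded below, so, for $s$ large, that piece is absorbed into the second term of the left-hand side. On a fixed neighbourhood $N(x_0)$ of $x_0$ one writes $v^2e^{2s\varphi}=(ab)\dfrac{(e^{s\varphi}v)^2}{ab}$ and uses a (localized) Hardy--Poincar\'e inequality of Section~\ref{secPR} applied to $w=e^{s\varphi}v$, together with $\psi'(x)=d_1\frac{x-x_0}{a}e^{R(x-x_0)^2}$, to dominate $\ds\int_{N(x_0)}v^2e^{2s\varphi}\,dx$ by $\ds\big(\sup_{N(x_0)}ab\big)\frac{C}{s}$ times the left-hand side of \eqref{carN}/\eqref{car}; since $(ab)(x_0)=0$, choosing $N(x_0)$ small and $s$ large makes this absorbable. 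To carry the information through $x_0$ and onto $\omega$ (needed when $x_0\notin\bar\omega$, and also to absorb the residual internal terms produced below), one applies the nondegenerate Carleman estimate of Proposition~\ref{classical Carleman} — valid on any subinterval avoiding $x_0$, by Hypothesis~\ref{Ass04} and \eqref{5.3'}, cf.\ Remark~\ref{rembo} — to $\xi v$ for suitable cut-offs $\xi$ supported away from $x_0$, and reabsorbs the resulting commutator terms: those involving $v$ go into $\int_\omega$ or the left-hand side, those involving $v_x$ are first turned into $v$-terms by a Caccioppoli-type inequality. The Dirichlet boundary terms are removed in the same way, applying Proposition~\ref{classical Carleman} on intervals containing an endpoint of $(0,1)$ and contained in $(0,1)\setminus\{x_0\}$, with the nondegenerate weight chosen so that its boundary contribution at that endpoint can be disposed of. After a final enlargement of $s_0$ only $\ds C\int_0^T\!\int_\omega v^2/a\,dxdt$ survives.

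The main difficulty is bookkeeping rather than a single decisive idea: the cut-off functions, the auxiliary intervals and the nondegenerate weights $\Theta\rho$ must be chosen compatibly with the degenerate weight $\varphi=\Theta\psi$ on the overlapping regions so that the two families of Carleman estimates can be glued, and the neighbourhood of $x_0$ requires special care because in the weakly degenerate regime the coefficient $\big((x-x_0)/a\big)^2$ of $v^2$ in the left-hand side itself vanishes at $x_0$ — which is precisely where the Hardy--Poincar\'e inequalities and the smallness of $ab$ near $x_0$ (together with the fact that $\psi$, hence $\varphi$, attains its minimum there) are used to recover the needed absorption.
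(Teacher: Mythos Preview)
Your route is not the paper's, and the Dirichlet case with $x_0\notin\bar\omega$ has a genuine gap.

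The paper never applies \eqref{car} or \eqref{carN} to $v$ itself (beyond the one immediate case you identify). It covers $[0,1]$ by overlapping pieces and on each piece applies a Carleman estimate to a \emph{cut-off} $\xi v$, always chosen so that the boundary bracket either vanishes (the cut-off is zero near that endpoint) or carries the favourable sign and is simply dropped; every commutator is then supported in $\omega$ and is turned into $\int_\omega v^2/a$ by Proposition~\ref{caccio1}. When $x_0\notin\bar\omega$ --- say $x_0<\alpha$ --- the piece $[0,\tau]$ contains both $x_0$ and the physical endpoint $x=0$, and the paper treats it by a \emph{reflection} onto $[-1,1]$: one sets $W(t,x)=\mp v(t,-x)$ for $x\in[-1,0]$, applies the Dirichlet version of \eqref{car} on $(-1,1)$ to $Z=\rho W$ with $\rho$ compactly supported in $(-\gamma,\gamma)$, so that again no boundary bracket survives and, by symmetry, the commutators land in $(\tau,\gamma)\subset\omega$.

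Starting instead from \eqref{car} applied to $v$ leaves on the right the strictly positive trace
\[
sd_1\int_0^T\Theta\Big[(1-x_0)e^{R(1-x_0)^2}e^{2s\varphi(t,1)}\big(v_x(t,1)\big)^2+x_0\,e^{Rx_0^2}e^{2s\varphi(t,0)}\big(v_x(t,0)\big)^2\Big]\,dt.
\]
The contribution at $x=1$ (the endpoint on the $\omega$-side) might in principle be absorbed via Proposition~\ref{classical Carleman} on $(\alpha,1)$, after a weight comparison $\psi(1)\le\rho(1)$ that itself requires taking $d_2$ large --- a point you leave implicit. The contribution at $x=0$, however, cannot be treated this way: every nondegenerate subinterval $[0,c]\subset(0,1)\setminus\{x_0\}$ must have $c<x_0$, so its commutator region lies in $(0,x_0)$ and not in $\omega$; pushing that commutator across $x_0$ towards $\omega$ forces you back to the degenerate estimate, which re-introduces the very trace at $x=0$ you are trying to kill. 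The reflection step is exactly what breaks this circularity, and it is missing from your sketch. Your Hardy--Poincar\'e absorption near $x_0$ concerns only the Neumann volume term $\int_{Q_T}v^2e^{2s\varphi}$ and does nothing for these Dirichlet traces.
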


The proof of the previous lemma follows by the next Caccioppoli's inequality:

\begin{Proposition}[Caccioppoli's inequality]\label{caccio1}
Assume Hypothesis $\ref{Ass03_new}$ and \eqref{5.3'} if Hypothesis $\ref{Ass0}$ or $\ref{Ass0_1}$ holds.
Let $\omega'$ and $\omega$ two open subintervals of $(0,1)$ such
that $\omega'\subset \subset \omega \subset  (0,1)$ and $x_0 \not
\in \overline{\omega}$. Let $\varphi(t,x)=\Theta(t)\Upsilon(x)$, where
$\Theta$ is defined in \eqref{theta} and
\[
\Upsilon \in C([0,1],(-\infty,0))\cap
C^1([0,1]\setminus\{x_0\},(-\infty,0))
\]
satisfies
\begin{equation}\label{stimayx}
|\Upsilon_x|\leq \frac{c}{\sqrt{a}} \mbox{ in }[0,1]\setminus\{x_0\}.
\end{equation}
Then there exist two positive constants
$C$ and $s_0$ such that every solution $v \in \cal Q$ of the
adjoint problem $(P_i)$, $i=1,2$, satisfies
\begin{equation}\label{Caccio}
   \int_{0}^T \int _{\omega'}   (v_x)^2e^{2s\varphi } dxdt
    \ \leq \ C \int_{0}^T \int _{\omega}   v^2  dxdt  \ \leq \ C \int_{0}^T \int _{\omega}   v^2  \frac{1}{a}dxdt 
\end{equation}
for all $s\geq s_0$.
\end{Proposition}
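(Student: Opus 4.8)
The plan is to follow the by-now-classical Caccioppoli argument adapted to the weighted $L^2_{1/a}$ setting. First I would fix a smooth cut-off function $\eta\colon[0,1]\to[0,1]$ with $\eta\equiv 1$ on $\omega'$, $\supp\eta\subset\omega$, and $\eta\equiv 0$ outside $\omega$; since $x_0\notin\overline\omega$, on the support of $\eta$ both $a$ and $b$ are bounded below by a positive constant, so all the weights $1/a$, $1/b$, $1/(ab)$ are harmless there and $\Upsilon$, $\Upsilon_x$ are bounded on $\supp\eta$ by \eqref{stimayx}. The starting point is to differentiate in time the quantity $\int_0^1 \eta^2 e^{2s\varphi} v^2\,dx$ and integrate over $(0,T)$; since $v\in\mathcal Q$, the boundary contributions in $t$ at $t=0$ and $t=T$ are controlled because $\varphi\to-\infty$ there, making $e^{2s\varphi}$ and its time derivative vanish in the limit. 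This produces
\[
0=\int_{Q_T}\big(\eta^2 e^{2s\varphi}v^2\big)_t\,dxdt
=\int_{Q_T}\big(\eta^2 e^{2s\varphi}\big)_t v^2\,dxdt+2\int_{Q_T}\eta^2 e^{2s\varphi} v\,v_t\,dxdt.
\]

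Next I would substitute the equation $v_t=-av_{xx}-\dfrac{\lambda}{b}v$ (valid since $v\in\mathcal Q\subset\mathcal S$) into the last term and integrate by parts in $x$ the term $-2\int \eta^2 e^{2s\varphi}v\,av_{xx}\,dx$; no boundary terms in $x$ appear because $\eta$ vanishes near $0$ and $1$. This yields a term $+2\int_{Q_T}\eta^2 e^{2s\varphi}a(v_x)^2\,dxdt$, which is the quantity we want to bound from above, plus cross terms of the form $\int_{Q_T}(\eta^2 e^{2s\varphi})_x\,a\,v\,v_x\,dxdt$ and the zeroth-order term $-2\lambda\int_{Q_T}\eta^2 e^{2s\varphi}\frac{v^2}{b}\,dxdt$. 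On $\supp\eta$ the factor $a$ is bounded and $a_0\le a$, so, after dividing through, it suffices to estimate
\[
\int_{Q_T}\eta^2 e^{2s\varphi}(v_x)^2\,dxdt
\le C\int_{Q_T}\big(\eta^2+|\eta\eta_x|+|(\eta^2)_t|/\text{(powers of }\Theta)\big)\,e^{2s\varphi}v^2\,dxdt
+\tfrac12\int_{Q_T}\eta^2 e^{2s\varphi}(v_x)^2\,dxdt,
\]
where the cross term was absorbed by Young's inequality (using $|\Upsilon_x|\le c/\sqrt a$ and $a\ge a_0$ on $\supp\eta$ to keep the weight bounded), and the last half-term is moved to the left-hand side. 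Finally, since $\varphi\le 0$ gives $e^{2s\varphi}\le 1$, and $\Theta$, $\Theta_t$ are bounded on any compact subinterval of $(0,T)$ — more precisely $|\Theta_t|\le C\Theta^{5/4}$ and $\Theta\ge \Theta_{\min}>0$, so all the $t$-derivative weights are dominated by a constant times $e^{2s\varphi}$ after enlarging $s_0$ — the right-hand side is bounded by $C\int_0^T\int_\omega v^2\,dxdt$. The second inequality in \eqref{Caccio} is then immediate from $1\le \dfrac{\max_{\overline\omega}a}{a}$ on $\omega$, i.e. $a$ is bounded above there.

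The main obstacle is purely bookkeeping: one must check carefully that \emph{every} weight appearing after the integrations by parts — the factors $e^{2s\varphi}$, $(e^{2s\varphi})_x=2s\Theta\Upsilon_x e^{2s\varphi}$, $(e^{2s\varphi})_t=2s\Theta_t\Upsilon e^{2s\varphi}$, and the singular $1/b$ from the potential — stays uniformly bounded on the region where $\eta\neq 0$. This is exactly where the hypotheses $x_0\notin\overline\omega$ and \eqref{stimayx} are used: away from $x_0$ the degeneracy/singularity is absent, $\sqrt a$ is bounded below, and the $s\Theta$ and $s\Theta_t$ prefactors are harmless once absorbed via Young's inequality and $s_0$ is chosen large. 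No delicate Hardy--Poincaré input is needed here; the estimate is local and elementary, which is why the proof is a routine adaptation and I would present it in that spirit.
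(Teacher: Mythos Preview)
Your proof is correct and follows the same overall strategy as the paper: choose a cut-off $\xi$ (your $\eta$) with $\xi\equiv1$ on $\omega'$ and $\supp\xi\subset\omega$, differentiate $\int_0^1\xi^2e^{2s\varphi}v^2\,dx$ in time, integrate over $(0,T)$, substitute the equation, integrate by parts in $x$, and absorb the cross terms by Young's inequality using that all weights are bounded on $\supp\xi$ (since $x_0\notin\overline\omega$) and that $s|\varphi_t|e^{2s\varphi}$, $s^2\varphi_x^2e^{2s\varphi}$ are uniformly bounded because $\Upsilon<0$ is continuous.

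There is, however, one genuine difference worth noting. For the singular term $-2\lambda\int_{Q_T}\xi^2e^{2s\varphi}\frac{v^2}{b}\,dxdt$ the paper distinguishes two cases: for $\lambda\le0$ it simply drops the term (it has the right sign), while for $\lambda>0$ it invokes the \emph{global} Hardy--Poincar\'e inequality on $(0,1)$ applied to $w:=\xi\sqrt{a}\,e^{s\varphi}v$, bounding $\int\frac{w^2}{ab}$ by $C^\ast\int(w_x)^2$ and then reabsorbing via Young with a small $\epsilon$. Your argument is more direct: you observe that $b\ge b_0>0$ on $\supp\xi$ because $x_0\notin\overline\omega$, so the potential contributes only a bounded zeroth-order term $C\int_0^T\int_\omega v^2\,dxdt$ regardless of the sign of $\lambda$. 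This is entirely local, avoids any Hardy--Poincar\'e input, and is simpler; the paper's route, on the other hand, makes explicit that the constant $C^\ast$ governing the well-posedness threshold is compatible with the absorption step, which is perhaps why the author chose it even though it is not strictly necessary here.
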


See \cite[Remark 10]{fm1} for some comments on \eqref{stimayx}.

\begin{proof}[Proof of Proposition \ref{caccio1}]
The proof is an adaptation of the one of \cite[Proposition 5.4]{fm1},
so we will skip some details. Let us consider a smooth function
$\xi: [0,1] \to \Bbb R$ such that
\[
\begin{cases}
0 \leq \xi (x)  \leq 1, &  \text{for all } x \in [0,1], \\
\xi (x) = 1 ,  &   x \in \omega', \\
\xi (x)=0, &     x \in [\, 0, 1 ]\setminus \omega.
\end{cases}
\]
Hence, by definition of $\vp$, we have
\[
    \begin{aligned}
    0 &= \int _0 ^T \frac{d}{dt} \left(\int _0 ^1 \xi ^2 e^{2s\varphi}
    v^2dx\right)dt
    = \int_{Q_T}\big(2s \xi ^2  \varphi _t e^{2s\varphi} v^2 + 2 \xi ^2
    e^{2s\varphi} vv_t \big)dxdt\\
    & \mbox{(since $v$ solves $(P_i)$, $i=1,2$)}
   \\
    &= 2\int_{Q_T} s\xi^2 \varphi _t e^{2s\varphi} v^2 dxdt+  2\int_{Q_T}( \xi
    ^2e^{2s\varphi}a)_xvv_xdxdt\\
& +  2\int_{Q_T}  \xi
    ^2e^{2s\varphi}a(v_x)^{2}dxdt- 2\lambda \int_{Q_T}\xi^2e^{2s\varphi}\frac{v^2}{b}dxdt.
\end{aligned}
    \]
    
    If $\lambda \le 0$, one has
    \[
    \begin{aligned}
   2\int_{Q_T}  \xi
    ^2e^{2s\varphi}a(v_x)^{2}dxdt &= - 2\int_{Q_T} s\xi^2 \varphi _t e^{2s\varphi} v^2 dxdt-  2\int_{Q_T}( \xi
    ^2e^{2s\varphi}a)_xvv_xdxdt
    \\
    &+  2\lambda \int_{Q_T}\xi^2e^{2s\varphi}\frac{v^2}{b}dxdt\\
    & \le - 2\int_{Q_T} s\xi^2 \varphi _t e^{2s\varphi} v^2 dxdt-  2\int_{Q_T}( \xi
    ^2e^{2s\varphi}a)_xvv_xdxdt.
    \end{aligned}
    \]
    Hence, proceeding as in \cite[Proposition 5.4]{fm1}, the claim follows.

    If $\lambda >0$, we can apply Lemmas \ref{L2}, \ref{L2''}, if (Dbc) hold, or Lemmas \ref{L2'}, \ref{L2'''},  if (Nbc) are in force, to $w= \xi \sqrt{a} e^{s\varphi}v$. Hence, fixed $\epsilon >0$, by the Cauchy-Schwarz inequality and by definition of $\xi$, we get,  for some $C_\epsilon >0$,
    \[
    \begin{aligned}
   2\lambda \int_{Q_T}\xi^2e^{2s\varphi}\frac{v^2}{b}dxdt &= 2\lambda \int_{Q_T}\frac{w^2}{ab}dxdt \le 2\lambda C^*\int_{Q_T}(w_x)^2dxdt
  \\
  & \le4 \lambda C^*\left(\epsilon \int_0^T\int_\omega \xi^2 e^{2s\varphi} a (v_x)^2dxdt+ C_\epsilon \int_0^T\int_\omega [(\xi e^{s\varphi}\sqrt{a})_x]^2 v^2 dxdt\right)=:J_1,
   \end{aligned}
    \]
    if (Dbc) hold or if (Nbc) are in force and Lemma \ref{L2'''} is applied, and
    \[
    \begin{aligned}
     2\lambda \int_{Q_T}\xi^2e^{2s\varphi}\frac{v^2}{b}dxdt &= 2\lambda \int_{Q_T}\frac{w^2}{ab} \le 2\lambda C^* \left[\int_{Q_T} (w_x)^2dxdt + \int_{Q_T} w^2dxdt\right]\\
     &\left. \begin{aligned}
     &\le4\lambda C^* \left(\epsilon\int_0^T\int_\omega \xi^2 e^{2s\varphi} a (v_x)^2dxdt+C_\epsilon \int_0^T\int_\omega [(\xi e^{s\varphi}\sqrt{a})_x]^2 v^2 dxdt \right)
     \\
     &+2\lambda C^*\int_0^T\int_\omega \xi^2 e^{2s\varphi} av^2dxdt
     \end{aligned} \right\}=:J_2,
    \end{aligned}
    \]
      in the case of (Nbc) and Lemma \ref{L2'}. In every case, setting $J:= J_1$ or $J:=J_2$, we have
      \begin{equation}\label{I}
      \begin{aligned}
       2\int_0^T \int_{\omega}\xi^2 e^{2s\varphi} a (v_x)^2dxdt
&\le- 2 \int_0^T \int_{\omega}s \xi ^2 \varphi _t e^{2s\varphi} v^2dxdt
\\
&  -2\int_0^T \int_{\omega} \left(\xi^2e^{2s\varphi}a\right)_xvv_x\:dxdt + J.
      \end{aligned}
      \end{equation}
      Now, as in \cite[Proposition 5.4]{fm1},
      \[
   -2\int_0^T \int_{\omega} \left(\xi^2e^{2s\varphi}a\right)_xvv_x\:dxdt \le \int_0^T \int_{\omega} \xi ^2 e^{2s\varphi} a (v_x)^2 dxdt
 + 4\int_0^T \int_{\omega}[\left(\xi
e^{s\vp}\sqrt{a}\right)_x]^2v^2dxdt.
      \]
      Substituting this last inequality in \eqref{I} and using the definition of $J$, it follows
      \[
      \begin{aligned}
      \begin{aligned}
  &     \int_0^T \int_{\omega}\xi^2 e^{2s\varphi} a (v_x)^2dxdt
\le - 2 \int_0^T \int_{\omega}s \xi ^2 \varphi _t e^{2s\varphi} v^2dxdt
+ 4\int_0^T \int_{\omega}[\left(\xi
e^{s\vp}\sqrt{a}\right)_x]^2v^2dxdt  + J\\
& \le - 2 \int_0^T \int_{\omega}s \xi ^2 \varphi _t e^{2s\varphi} v^2dxdt
+ 4(1+\lambda C^*C_\epsilon)\int_0^T \int_{\omega}[\left(\xi
e^{s\vp}\sqrt{a}\right)_x]^2v^2dxdt \\
&+ 4\lambda C^* \epsilon \int_0^T \int_{\omega}\xi^2 e^{2s\varphi} a (v_x)^2dxdt+ 2\lambda C^*\int_0^T \int_{\omega}\xi^2 e^{2s\varphi} a v^2dxdt.
      \end{aligned}
      \end{aligned}
      \]
     Hence
\begin{equation}\label{I_1}
    \begin{aligned}
    &(1-4\lambda C^* \epsilon) \int_0^T \int_{\omega}\xi^2 e^{2s\varphi} a (v_x)^2dxdt\le - 2
\int_0^T \int_{\omega}s \xi^2 \varphi _t e^{2s\varphi} v^2dxdt
    \\&+ 4(1+\lambda C^* C_\epsilon )\int_0^T \int_{\omega}[\left(\xi
e^{s\varphi}\sqrt{a}\right)_x]^2v^2dxdt+2\lambda C^*\int_0^T \int_{\omega}\xi^2 e^{2s\varphi} a v^2dxdt.
\end{aligned}
\end{equation}
Moreover, using the fact that $x_0 \not \in \overline{\omega}$, we have, as in \cite[Proposition 5.4]{fm1}, the existence of a
positive constant $C$ depending on $\epsilon$ such that
\[
\begin{aligned}
- 2 &\int_0^T \int_{\omega}s \xi^2 \varphi_t e^{2s\varphi}
v^2dxdt
 + 4(1+\lambda C^* C_\epsilon)\int_0^T \int_{\omega}[\left(\xi
e^{s\vp}\sqrt{a}\right)_x]^2v^2dxdt\\
 &  \le C\int_0 ^T \int _{\omega} v^2 dxdt.
   \end{aligned}
\]
Substituting in \eqref{I_1}, we get
\[
\begin{aligned}
&(1-4\lambda C^*\epsilon) \int_0^T \int_{\omega}\xi^2 e^{2s\varphi} a (v_x)^2dxdt\le C \int_0 ^T \int _{\omega} v^2dxdt+2\lambda C^*\int_0^T \int_{\omega}\xi^2 e^{2s\varphi} a v^2dxdt\\
&\le C \int_0 ^T \int _{\omega} v^2dxdt \le  C\int_0 ^T \int _{\omega} v^2 \frac{1}{a}dxdt,
\end{aligned}
\]
for a positive constant $C$ (still depending on $\epsilon$).
Since $x_0 \not \in \overline{\omega'}$
and choosing $\ds\epsilon < \frac{1}{4\lambda C^*}$, we can prove that there exists a positive constant $C$ such that
\[
\begin{aligned}
&\inf_{\omega '}a(x)\int_0^{T}\int _{\omega '} e^{2s\varphi}
(v_x)^2dxdt \le \int_0^T \int_{\overline{\omega  '}} \xi^2
e^{2s\varphi} a (v_x)^2dxdt\\
&\le \int_0^T \int_{\omega}
\xi ^2 e^{2s\varphi} a (v_x)^2dxdt\le C\int_0 ^T \int _{\omega} v^2 \frac{1}{a}dxdt.  \end{aligned}
\]
\end{proof}
\begin{Remark}\label{remCaccio}
Clearly \eqref{Caccio} holds also if the state space $(0,1)$ does not contain a degenerate point.
\end{Remark}

\begin{proof}[Proof of Lemma \ref{lemma3} if $x_0 \not \in \overline \omega$] 
Recall that $\omega =(\alpha, \beta)$ and suppose $x_0 <\alpha$ (the proof is similar if we assume that
$\beta < x_0$ with simple adaptations); moreover, set $\tau:=
\ds \frac{2\alpha +\beta}{3}$ and $\gamma:= \ds \frac{\alpha +2\beta}{3}$,
so that $\alpha<\tau<\gamma<\beta$. Now, fix $\tilde \alpha \in (\alpha, \tau)$, $\tilde \beta \in (\gamma , \beta)$ and consider a smooth
function $\xi: [0,1] \to \Bbb R$ such that
\[\begin{cases}
    0 \leq \xi (x)  \leq 1, &  \text{ for all } x \in [0,1], \\
    \xi (x) = 1 ,  &   x \in [\tau, \gamma],\\
    \xi (x)=0, &     x \in [0,1]\setminus (\tilde \alpha, \tilde \beta).
    \end{cases}
\]
Define $w:= \xi v$, where $v$ is any fixed solution of
$(P_i)$, $i=1,2$. Hence, neglecting the final--time datum (of no
interest in this context), $w$ satisfies
\[
\begin{cases}
w_t + a  w_{xx} +\ds \lambda\frac{w}{b}= a (\xi _{xx} v + 2\xi _x  v_x )=:f,&
(t,x) \in Q_T, \\
w(t,0)= w(t, 1)=0, & t \in (0,T).
\end{cases}
\]
Applying Theorem \ref{Cor1} with (Dbc), there exists two positive constants $C$ and $s_0$ such that, for all $s \ge s_0$,
\begin{equation}\label{car91}
\begin{aligned}
\int_{Q_T}\Big( s \Theta   (w_x)^2 + s^3 \Theta^3
   \left(\frac{x-x_0}{a}\right)^2\ w^2 \Big)
    e^{2s \varphi} \, dx dt
       \le C\int_{Q_T}\frac{e^{2s \varphi}}{a} f^2  dxdt.
\end{aligned}
\end{equation}
Then, using the definition of $\xi$  and in
particular the fact that  $\xi_x$ and  $\xi_{xx}$ are supported in
$\hat \omega$, where $\hat \omega:= (\tilde\alpha, \tau) \cup( \gamma,
\tilde\beta) $, we can write
\begin{equation}\label{fa}
\frac{f^2}{a}= a(\xi _{xx} v +2 \xi _x v_x)^2 \le C( v^2+
(v_x)^2)\chi_{\hat \omega}.
\end{equation}
Hence, using the fact that 
$\hat \omega\subset \subset \omega$ and $x_0\not \in \overline \omega$, we find
\begin{equation}\label{stimacar1}
\begin{aligned}
&\int_0^T\int_{\tau}^{\gamma}\left( s \Theta (v_x)^{2} + s^3
\Theta ^3 \left(\frac{x-x_0}{a}\right)^2 v^{2}\right)
e^{{2s\varphi}} dxdt\\
&=\int_0^T\int_{\tau}^{\gamma}\Big( s \Theta   (w_x)^2 + s^3
\Theta^3 \left(\frac{x-x_0}{a}\right)^2 w^2 \Big)
e^{2s \varphi} \, dx dt\\
&\le\int_{Q_T}\Big( s \Theta  (w_x)^2 + s^3 \Theta^3
\left(\frac{x-x_0}{a} \right)^2w^2 \Big)
e^{2s \varphi} \, dx dt\\
& \mbox{ (by \eqref{car91} and \eqref{fa})}\\
&\le C  \int_0^T \int_{\hat \omega}e^{2s \varphi}( v^2+
(v_x)^2)dxdt\\
& \mbox{ (by Proposition \ref{caccio1} with $\vp=\Theta\psi$ and using the fact that}\\
&\mbox{ $e^{2s\varphi}$ is bounded)}\\
&\le C \int_0^T \int_{\omega} \frac{v^2}{a} dxdt.
\end{aligned}
\end{equation}
Now, consider a smooth function $\eta: [0,1] \to \Bbb R$ such that
\[
\begin{cases}
    0 \leq \eta (x)  \leq 1, &  \text{ for all } x \in [0,1], \\
   \eta (x) = 1 ,  &   x \in [\gamma , 1],\\
  \eta (x)=0, &     x \in \left[0,\tau\right],
\end{cases}
\]
and define $z:= \eta v$. Then $z$  satisfies the non degenerate problem
\begin{equation}\label{eq-z*}
\begin{cases}
z_t + a  z_{xx}+ \lambda \ds\frac{z}{b}= h,  &(t,x) \in(0,T)\times (\alpha,1),\\
Bz(\alpha)= Bz(1)=0, & t \in (0,T),
\end{cases}
\end{equation}
with $h:=a (\eta _{xx} v + 2\eta _x  v_x)\in L^2\big((0,T)\times
(\alpha, 1)\big)$.

Moreover, since the problem is {\em non degenerate},
we can apply, thanks to Remark \ref{rembo}, Proposition \ref{classical Carleman} in  $(\alpha, 1)$. In the following of the proof, we will distinguish between  Dirichlet boundary conditions and Neumann ones.

{\it\underline{Dirichlet boundary conditions:}} By Proposition \ref{classical Carleman}  there exist two positive constants $C$ and $s_0$ such that
\begin{equation}\label{D}
\int_0^T\int_\alpha^1 \left(s\Theta (z_x)^2 + s^3 \Theta^3
 z^2\right)e^{2s\Phi}dxdt\le C\int_0^T\int_\alpha^1 \frac{h^{2}}{a}e^{2s\Phi}dxdt,
\end{equation}
for $s\geq s_0$. Observe that
the boundary terms in \eqref{carcorretta} are non positive ($z_x(t,0)=0$ for all $t\in [0,T]$).

Now, proceeding as in \eqref{fa}, we get that there exists a positive constant $C$ such that
$\ds
 \frac{h^2}{a} \le  C (v^2+ v_x^2)\chi_{\tilde\omega},
$ where $\tilde \omega =(\tau, \gamma)$.
Hence, by Remark \ref{remCaccio}, we can apply  Proposition
\ref{caccio1}, and recalling what the support of $\eta$ is, we get
\begin{equation}\label{standa}
\begin{aligned}
&\int_0^T\int_\alpha^1 \left(s\Theta (z_x)^2 + s^3 \Theta^3
 z^2\right)e^{2s\Phi}dxdt \\
&\le C \int_0^T\int_{\tilde \omega}e^{2s\Phi}( v^2+ (v_x)^2)dxdt\le
C \int_0^T\int_{\tilde \omega} v^2dxdt+ C\int_0^T\int_{\tilde
\omega}e^{2s\Phi}(v_x)^2dxdt \\& \le C \int_0^T\int_{\omega}\frac{
v^2}{a}dxdt.
\end{aligned}
\end{equation}
Since $x_0 \not \in (\alpha, 1)$, one has that there exists $k>0$ such that
\begin{equation}\label{5.16'}
\begin{aligned}
&\int_0^T\int_{\alpha}^1 \Big(s \Theta (z_x)^2
 + s^3 \Theta^3
     \left( \frac{x-x_0}{a}\right)^2z^2\Big) e^{2s\varphi}dxdt \\
& \le k \int_0^T\int_{\alpha}^1 s \Theta (z_x)^2e^{2s\Phi} dxdt +k\int_0^T\int_{\alpha}^1 s^3\Theta^3 z^2e^{2s\Phi} dxdt\\
& \le C \int_0^T \int_{\omega} \frac{v^2}{a}dxdt,
\end{aligned}
\end{equation}
for a positive constant $C$ and $s$ large enough. Hence, by
definition of $z$ and by the inequality above, we get
\begin{equation}\label{stimacar21}
\begin{aligned}
&\int_0^T\int_{\gamma}^1 \Big(s \Theta  (v_x)^2 + s^3 \Theta^3
\left(\frac{x-x_0}{a}\right)^2v^2\Big) e^{2s\varphi} dxdt\\
&=\int_0^T\int_{\gamma}^1 \Big(s \Theta  (z_x)^2 + s^3 \Theta^3
\left(\frac{x-x_0}{a}\right)^2z^2\Big) e^{2s\varphi}dxdt\\
& \le \int_0^T\int_{\alpha}^1 \Big(s \Theta  (z_x)^2 + s^3 \Theta^3
\left( \frac{x-x_0}{a}\right)^2z^2\Big) e^{2s\varphi}dxdt
\\
&\le C \int_0^T \int_{\omega} \frac{v^2}{a}dxdt,
\end{aligned}
\end{equation}
for a positive constant $C$ and for $s$ large enough.
Thus,  there exists two positive constants $C$ and $s_0$ such that, by \eqref{stimacar1} and \eqref{stimacar21},
\begin{equation}\label{carin01}
\begin{aligned}
\int_0^T \int _{\tau}^{1}  \Big( s \Theta  (v_x)^2 + s^3 \Theta^3
\left(\frac{x-x_0}{a}\right)^2  v^2 \Big) e^{2s \varphi } \, dx dt
\le C \int_{0}^T \int _{\omega}   \frac{v^2 }{a} dxdt,
\end{aligned}
\end{equation}
for all $s \ge s_0$.
To complete the
proof it is sufficient to prove a similar inequality for
$x\in[0,\tau]$. To this aim, we follow a reflection procedure
already introduced in \cite{fm} considering $W$ given by
\[
W(t,x):= \begin{cases} v(t,x), & x \in [0,1],\\
-v(t,-x), & x \in [-1,0]
\end{cases}
\]
and the functions $\tilde a$ and $\tilde b$ introduced in \eqref{tildea} but restricted to $[-1,1]$, i.e.
\[
\tilde a(x):= \begin{cases} a(x), & x \in [0,1],\\
a(-x), & x \in [-1,0]
\end{cases}\quad\text{and} \quad\tilde b(x):= \begin{cases} b(x), & x \in [0,1],\\
b(-x), & x \in [-1,0],
\end{cases}
\]
so that $W$ satisfies the problem
\[
\begin{cases}
W_t +\tilde a W_{xx}+ \lambda \ds\frac{W}{\tilde b}= 0, &(t,x) \in  (0,T)\times (-1,1),\\
W(t,-1)=W(t,1) =0, & t \in (0,T).
\end{cases}
\]
Now, consider a cut off function $\rho: [-1,1] \to \Bbb R$ such that
\[\begin{cases}
0 \leq \rho (x)  \leq 1, &  \text{ for all } x \in [-1,1], \\
\rho (x) = 1 ,  &   x \in [-\tau, \tau],\\
    %\left[\frac{2\lambda_1+\beta_1}{3}, \frac{\lambda_2+ 2 \beta_2}{3} \right], \\
\rho (x)=0, &     x \in \left[-1,-\gamma\right ]\cup
\left[\gamma,1\right],
\end{cases}
\]
and define $Z:= \rho W$. Then $Z$ satisfies
\begin{equation}\label{eq-Z*}
\begin{cases}
Z_t + \tilde aZ_{xx}+ \lambda \ds \frac{Z}{\tilde b}=\tilde h,  &(t,x) \in (0,T)\times (-1,1),\\
Z(t,-1)= Z(t,1)=0, & t \in (0,T),
\end{cases}
\end{equation}
where $\tilde h=\tilde a(\rho_{xx}W+2\tilde \rho_xW_x)$. Now, considering the function $\tilde \varphi$ introduced in \eqref{tildepsi_nondiv} but restricted to $[-1,1]$, i.e. $\tilde \varphi(t,x) := \Theta(t) \tilde \psi (x)$ with
\begin{equation}\label{tildepsi}
\tilde \psi(x) := \begin{cases}
\psi(x), & x \ge 0,\\
\displaystyle \psi(-x)= d_1\left[\int_{-x_0}^x \frac{t+x_0}{\tilde
a(t)}e^{R(t+x_0)^2}dt-d_2\right], & x <0,\end{cases}
\end{equation}
we use the analogue of Theorem \ref{Cor1} on $(- 1,
1)$ in place of $(0,1)$ and with $\varphi$ replaced by $\tilde
\varphi$. Moreover, using the fact that $Z_x(t, -1)=Z_x(t,
1)=0$, the definition of $W$ and the fact that $\rho$ is
supported in $\left[-\gamma,-\tau\right] \cup\left[\tau,
\gamma\right]$, we get
\begin{equation}\label{*}
\begin{aligned}
& \int_0^T\int_{-1}^1  \left(s\Theta (Z_x)^2 + s^3
\Theta^3
\left(\frac{x-x_0}{\tilde a} \right)^2Z^2\right)e^{2s\tilde\varphi}dxdt\\
& \le c
\int_0^T\int_{-1}^1  \tilde h^{2}\frac{e^{2s\tilde\varphi}}{\tilde a}dxdt\\
&\le C \int_0^T \int_{-\gamma}^{-\tau}(
W^2+ (W_x)^2)e^{2s \tilde \varphi}dxdt + C\int_0^T \int_{\tau}^{\gamma}(W^2+ (W_x)^2)e^{2s\varphi}dxdt\\
&\mbox{(since $\tilde \psi (x)= \psi (-x)$, for $x <0$)}\\
&= 2C\int_0^T \int_{\tau}^{\gamma}(W^2+ (W_x)^2)e^{2s\varphi}dxdt
= 2C\int_0^T \int_{\tau}^{\gamma}(v^2+
(v_x)^2)e^{2s\varphi}dxdt\\
& \mbox{ (by Propositions \ref{caccio1}) }\\
& \le C \int_0^T \int_{\omega} \frac{v^2}{a}dxdt,
\end{aligned}
\end{equation}
for some positive constants $c$, $C$ and $s$ large enough.
 Hence, by definitions of $Z$, $W$ and $\rho$, and using the previous inequality one has
\begin{equation}\label{car101}
\begin{aligned}
&\int_0^T\int_{0}^{\tau}  \left(s\Theta (v_x)^2 + s^3 \Theta^3
\left(\frac{x-x_0}{a} \right)^2v^2\right)e^{2s\varphi}dxdt\\
&= \int_0^T\int_{0}^{\tau}  \left(s\Theta (W_x)^2 + s^3 \Theta^3
\left(\frac{x-x_0}{a} \right)^2W^2\right)e^{2s\varphi}dxdt\\
&= \int_0^T\int_{0}^{\tau}  \left(s\Theta (Z_x)^2 + s^3 \Theta^3
\left(\frac{x-x_0}{a} \right)^2Z^2\right)e^{2s\varphi}dxdt\\
&\le \int_0^T\int_{-1}^1   \left(s\Theta (Z_x)^2 + s^3
\Theta^3
\left(\frac{x-x_0}{a} \right)^2Z^2\right)e^{2s\tilde \varphi}dxdt\\
&\le C \int_0^T \int_{\omega} \frac{v^2}{a}dxdt,
\end{aligned}
\end{equation}
for a positive constant $C$ and $s$ large enough. Therefore, by
\eqref{carin01} and \eqref{car101}, the conclusion follows.

{\it \underline{Neumann boundary conditions:}} In this case \eqref{D} becomes
\begin{equation}\label{N}
\int_0^T\int_\alpha^1 \left(s\Theta (z_x)^2 + s^3 \Theta^3
 z^2\right)e^{2s\Phi}dxdt\le C\left(\int_0^T\int_\alpha^1 \frac{h^{2}}{a}e^{2s\Phi}dxdt + \int_0^T\int_{\tilde \omega}  z^2e^{2s\Phi}dxdt  \right),
\end{equation}
 for a positive constant $C$ and for all $s\geq s_0$. Here, we recall, $\tilde \omega =(\tau, \gamma)$.
As for \eqref{standa}, we get
\begin{equation}\label{standaN}
\begin{aligned}
&\int_0^T\int_\alpha^1 \left(s\Theta (z_x)^2 + s^3 \Theta^3
 z^2\right)e^{2s\Phi}dxdt \\
&\le C \int_0^T\int_{\tilde \omega}e^{2s\Phi}( v^2+ (v_x)^2)dxdt+ C\int_0^T\int_{\tilde \omega} z^2e^{2s\Phi}dxdt\\
&(\text{since } z = \eta v)\\
&\le
C \int_0^T\int_{\tilde \omega} v^2dxdt+ C\int_0^T\int_{\tilde
\omega}e^{2s\Phi}(v_x)^2dxdt \\& \le C \int_0^T\int_{\omega}\frac{
v^2}{a}dxdt.
\end{aligned}
\end{equation}
Proceeding as before (see \eqref{5.16'} and \eqref{stimacar21}), there exists $k>0$ such that
\begin{equation}\label{stimacar21N}
\begin{aligned}
&\int_0^T\int_{\gamma}^1 \Big(s \Theta  (v_x)^2 + s^3 \Theta^3
\left(\frac{x-x_0}{a}\right)^2v^2\Big) e^{2s\varphi} dxdt\\
&\le
\int_0^T\int_{\alpha}^1 \Big(s \Theta (z_x)^2
 + s^3 \Theta^3
     \left( \frac{x-x_0}{a}\right)^2z^2\Big) e^{2s\varphi}dxdt \\
& \le k \int_0^T\int_{\alpha}^1 s \Theta (z_x)^2e^{2s\Phi} dxdt +k\int_0^T\int_{\alpha}^1 s^3\Theta^3 z^2e^{2s\Phi} dxdt\\
& \le C \int_0^T \int_{\omega} \frac{v^2}{a}dxdt,
\end{aligned}
\end{equation}
for a positive constant $C$ and $s$ large enough.
Thus \eqref{stimacar1} and \eqref{stimacar21N} imply again \eqref{carin01}.
As before we have to prove a similar inequality for
$x\in[0,\tau]$. We consider $W$ defined as in \eqref{WN}, but restriced to $[-1,1]$ and $\tilde a$, $\tilde b$, and $Z$ defined as for the case when (Dbc) holds. Then
$W$ satisfies the problem
\[
\begin{cases}
W_t +\tilde a W_{xx}+ \lambda \ds\frac{W}{\tilde b}= 0, &(t,x) \in  (0,T)\times (-1,1),\\
W_x(t,-1)=W_x(t,1) =0, & t \in (0,T),
\end{cases}
\]
while $Z$ satisfies
\eqref{eq-Z*} and \eqref{*}. As a consequence, if $v$ solves $(P_2)$, $v$ satisfies \eqref{car101}. Hence, the conclusion follows.

\end{proof}

\begin{proof}[Proof of Lemma $\ref{lemma3}$ if $x_0 \in \omega$]
$\quad$

{\it \underline{Dirichlet boundary conditions:}} Assume that  $v \in \cal Q$ solves
$(P_1)$. By assumption, we can find two subintervals
$\omega_1=(\lambda_1,\beta_1)\subset (0, x_0),
\omega_2=(\lambda_2,\beta_2) \subset (x_0,1)$ such that $(\omega_1
\cup \omega_2) \subset \subset \omega \setminus \{x_0\}$. Now, fix $\tilde \alpha \in (\alpha, \lambda_1)$, $\tilde \beta \in (\beta_2 , \beta)$ and consider a smooth function $\xi:[0,1]\to[0,1]$ such that
\[
\xi(x)=\begin{cases} 0,& x\in [0,\tilde\alpha],\\
1, & x\in [\lambda_1,\beta_2]\\
 0, &x\in [\tilde\beta,1],
\end{cases}
\]
and define $w:= \xi v$.
Hence, $w$ satisfies
\[
\begin{cases}
w_t + a  w_{xx}+ \lambda\ds \frac{w}{b} = a( \xi_{xx} v  + 2\xi _x  v_x) =:f,&
(t,x) \in Q_T, \\
w(t,0)= w(t,1)=0, & t \in (0,T).
\end{cases}
\]
Applying Theorem \ref{Cor1}, using the fact that $w_x(t,0)=w_x(t,1)=0$, the definition of $\xi$  and in
particular the fact that  $\xi_x$ and  $\xi_{xx}$ are supported
in $\tilde \omega:= [\tilde\alpha, \lambda_1] \cup[ \beta_2, \tilde\beta]\subset \subset \check{\omega}=[\alpha, \beta_1]\cup[\lambda_2, \beta]$,
we can write
\[
\begin{aligned}
&\int_{Q_T} \left(s\Theta (w_x)^2 + s^3 \Theta^3
\left(\frac{x-x_0}{a} \right)^2w^2\right)e^{2s\varphi}dxdt\\
& \le C\int_0^T \int_{\tilde\omega} (v^2+ v_x^2) e^{2s\varphi} dxdt
\le C \int_0^T \int_{\tilde \omega} \frac{v^2}{a} dxdt + C\int_0^T \int_{\tilde\omega} v_x^2 e^{2s\varphi} dxdt\\
&\le C \int_0^T \int_{\omega} \frac{v^2}{a} dxdt + C\int_0^T \int_{\tilde\omega} v_x^2 e^{2s\varphi} dxdt\\
&\text{(by Proposition \ref{caccio1})}\\
& \le C \int_0^T \int_{\omega} \frac{v^2}{a} dxdt + C\int_0^T \int_{\check\omega} \frac{v^2}{a} dxdt \le C \int_0^T \int_{\omega} \frac{v^2}{a} dxdt,
\end{aligned}
\]
for a positive constant $C$. Hence,
\begin{equation}\label{w}
\begin{aligned}
&\int_0^T \int_{\lambda_1}^{\beta_2}\left(s\Theta (v_x)^2 + s^3 \Theta^3
\left(\frac{x-x_0}{a} \right)^2v^2\right)e^{2s\varphi}dxdt
\\
&= \int_0^T \int_{\lambda_1}^{\beta_2} \left(s\Theta (w_x)^2 + s^3 \Theta^3
\left(\frac{x-x_0}{a} \right)^2w^2\right)e^{2s\varphi}dxdt\\
&\le \int_{Q_T} \left(s\Theta (w_x)^2 + s^3 \Theta^3
\left(\frac{x-x_0}{a} \right)^2w^2\right)e^{2s\varphi}dxdt \le C \int_0^T \int_{\omega} \frac{v^2}{a} dxdt.
\end{aligned}
\end{equation}
Now, consider the smooth function $\eta:[0,1]\to[0,1]$ such that
\begin{equation}\label{eta}
\eta(x)=\begin{cases} 1,& x\in [0,\lambda_1],\\
0, & x\in [\beta_1,1],
\end{cases}
\end{equation}
and define $z:= \eta v$; hence, $z$ satisfies
\[
\begin{cases}
z_t + a  z_{xx}+ \lambda\ds \frac{z}{b} = a( \eta _{xx}  z  + 2\eta _x  z_x) =:h,&
(t,x) \in(0, T)\times (0,1), \\
z(t,0)= z(t,1)=0, & t \in (0,T).
\end{cases}
\]
Applying Theorem \ref{Cor1}, using the fact that the boundary terms in \eqref{car} are non positive (observe that $z_x(t,1)=0$), and the fact that  $\eta_x$ and  $\eta_{xx}$ are supported
in $[\lambda_1, \beta_1]\subset \subset \hat \omega=[\tilde \alpha, \tilde \beta_1]$, where $\tilde \beta_1 \in (\beta_1, x_0)$ is fixed,
we get
\begin{equation}\label{z}
\begin{aligned}
&\int_0^T \int_{0}^{\lambda_1}\left(s\Theta (v_x)^2 + s^3 \Theta^3
\left(\frac{x-x_0}{a} \right)^2v^2\right)e^{2s\varphi}dxdt
\\
&= \int_0^T \int_{0}^{\lambda_1} \left(s\Theta (z_x)^2 + s^3 \Theta^3
\left(\frac{x-x_0}{a} \right)^2z^2\right)e^{2s\varphi}dxdt\\
&\le\int_{Q_T} \left(s\Theta (z_x)^2 + s^3 \Theta^3
\left(\frac{x-x_0}{a} \right)^2z^2\right)e^{2s\varphi}dxdt\\
& \text{(by Theorem \ref{Cor1})}\\
& \le C\int_0^T \int_{Q_T} \frac{h^2}{a} e^{2s\varphi} dxdt \le C\int_0^T \int_{\lambda_1}^{\beta_1} (v^2+ v_x^2) e^{2s\varphi} dxdt\\
&\le C\int_0^T \int_{\lambda_1}^{\beta_1} \left(\frac{v^2}{a}+ v_x^2\right) e^{2s\varphi} dxdt\\
&\le C \int_0^T \int_{\omega} \frac{v^2}{a} dxdt + C\int_0^T \int_{\lambda_1}^{\beta_1} v_x^2 e^{2s\varphi} dxdt\\
&\text{(by Proposition \ref{caccio1})}\\
& \le C \int_0^T \int_{\omega} \frac{v^2}{a} dxdt + C\int_0^T \int_{\hat\omega} \frac{v^2}{a} dxdt \le C \int_0^T \int_{\omega} \frac{v^2}{a} dxdt,
\end{aligned}
\end{equation}
for a positive constant $C$. Finally, consider the smooth function $\rho:[0,1]\to[0,1]$ such that
\begin{equation}\label{rho}
\rho(x)=\begin{cases} 1,& x\in [\beta_2, 1],\\
0, & x\in [0, \lambda_2],
\end{cases}
\end{equation}
and define $q:= \rho v$; hence, fixed $\tilde \lambda_2 \in(x_0, \lambda_2)$, $q$ satisfies
\[
\begin{cases}
q_t + a  q_{xx}+ \lambda\ds \frac{q}{b} = a( \rho _{xx} q  + 2\rho _x  q_x) =:H,&
(t,x) \in(0, T)\times (\tilde \lambda_2,1), \\
q(t,\tilde\lambda_2)= q(t,1)=0, & t \in (0,T).
\end{cases}
\]
The previous problem is non degenerate, so we can apply Proposition \ref{classical Carleman}. Since the boundary terms in \eqref{carcorretta} are non positive (observe that $q_x(t,\tilde \lambda_2)=0$) and $\rho_x$, $\rho_{xx}$ are supported
in $[\lambda_2, \beta_2]\subset \subset \breve{\omega}=[\tilde \lambda_2, \tilde \beta]$,
we get
\begin{equation}\label{q}
\begin{aligned}
&\int_0^T \int_{\beta_2}^{1}\left(s\Theta (v_x)^2 + s^3 \Theta^3
\left(\frac{x-x_0}{a} \right)^2v^2\right)e^{2s\varphi}dxdt
\\
&= \int_0^T \int_{\beta_2}^{1} \left(s\Theta (q_x)^2 + s^3 \Theta^3
\left(\frac{x-x_0}{a} \right)^2q^2\right)e^{2s\varphi}dxdt\\
&\le k\int_0^T \int_{\tilde \lambda_2}^{1} \left(s\Theta (q_x)^2 + s^3 \Theta^3
q^2\right)e^{2s\Phi}dxdt\\
& \le C\int_0^T \int_{\tilde \lambda_2}^{1}\frac{H^2}{a} e^{2s\Phi} dxdt \le C\int_0^T \int_{\lambda_2}^{\beta_2} (v^2+ v_x^2) e^{2s\varphi} dxdt\\
&\le C\int_0^T \int_{\lambda_2}^{\beta_2} \left(\frac{v^2}{a}+ v_x^2\right) e^{2s\varphi} dxdt\\
&\le C \int_0^T \int_{\omega} \frac{v^2}{a} dxdt + C\int_0^T \int_{\lambda_2}^{\beta_2} v_x^2 e^{2s\varphi} dxdt\\
&\text{(by Proposition \ref{caccio1} for the non degenerate case)}\\
& \le C \int_0^T \int_{\omega} \frac{v^2}{a} dxdt + C\int_0^T \int_{\breve\omega} \frac{v^2}{a} dxdt \le C \int_0^T \int_{\omega} \frac{v^2}{a} dxdt,
\end{aligned}
\end{equation}
for positive constants $k$ and $C$. Thus, by \eqref{w}, \eqref{z} and \eqref{q} the conclusion follows.

{\it\underline{Neumann boundary conditions:}} 
We proceed as for the Dirichlet case, obtaining \eqref{w}. Now, consider the cut-off function $\eta$ defined in \eqref{eta} and set $z:= \eta v$; hence, $z$ satisfies
\[
\begin{cases}
z_t + a  z_{xx}+ \lambda\ds \frac{z}{b} = a(\eta _{xx} z  + 2\eta _x  z_x) =:h,&
(t,x) \in Q_T, \\
z_x(t,0)= z_x(t,1)=0, & t \in (0,T).
\end{cases}
\]
Applying Theorem \ref{Cor1} and using the fact that  $\eta_x$ and  $\eta_{xx}$ are supported
in $[\lambda_1, \beta_1]\subset \subset \hat \omega=[\tilde \alpha, \tilde \beta_1]$, where $\tilde \beta_1$ is as before,
we get
\begin{equation}\label{zN}
\begin{aligned}
&\int_0^T \int_{0}^{\lambda_1}\left(s\Theta (v_x)^2 + s^3 \Theta^3
\left(\frac{x-x_0}{a} \right)^2v^2\right)e^{2s\varphi}dxdt
\\
&= \int_0^T \int_{0}^{\lambda_1} \left(s\Theta (z_x)^2 + s^3 \Theta^3
\left(\frac{x-x_0}{a} \right)^2z^2\right)e^{2s\varphi}dxdt\\
&\le\int_{Q_T} \left(s\Theta (z_x)^2 + s^3 \Theta^3
\left(\frac{x-x_0}{a} \right)^2z^2\right)e^{2s\varphi}dxdt\\
& \le C\left( \int_{Q_T} \frac{h^2}{a} e^{2s\varphi} dxdt + \int_0^T \int_{\omega} z^2 e^{2s\varphi}dxdt\right) \\
&\le C\int_0^T \int_{\lambda_1}^{\beta_1} (v^2+ v_x^2) e^{2s\varphi} dxdt + C\int_0^T \int_{\alpha}^{\beta_1} v^2e^{2s\varphi} dxdt\\
&\le C\int_0^T \int_{\lambda_1}^{\beta_1} \left(\frac{v^2}{a}+ v_x^2\right) e^{2s\varphi} dxdt+ C\int_0^T \int_{\omega}\frac{v^2}{a} dxdt\\
&\le C \int_0^T \int_{\omega} \frac{v^2}{a} dxdt + C\int_0^T \int_{\lambda_1}^{\beta_1} v_x^2 e^{2s\varphi} dxdt\\
&\text{(by Proposition \ref{caccio1})}\\
& \le C \int_0^T \int_{\omega} \frac{v^2}{a} dxdt + C\int_0^T \int_{\hat\omega} \frac{v^2}{a} dxdt \le C \int_0^T \int_{\omega} \frac{v^2}{a} dxdt,
\end{aligned}
\end{equation}
for a positive constant $C$. Finally, consider $q:= \rho v$, where $\rho$ is the cut-off function defined in \eqref{rho}; hence, fixed $\tilde \lambda_2$ as before, $q$ satisfies
\[
\begin{cases}
q_t + a  q_{xx}+ \lambda\ds \frac{q}{b} = a( \rho _{xx}  q  + 2\rho _x  q_x) =:H,&
(t,x) \in(0, T)\times (\tilde\lambda_2,1), \\
q_x(t,\tilde \lambda_2)= q_x(t,1)=0, & t \in (0,T).
\end{cases}
\]
The previous problem is non degenerate, so we can apply Proposition \ref{classical Carleman}. Since  $\rho_x$, $\rho_{xx}$ are supported
in $[\lambda_2, \beta_2]\subset \subset \breve{\omega}=[\tilde \lambda_2, \tilde \beta]$,
we get
\begin{equation}\label{qN}
\begin{aligned}
&\int_0^T \int_{\beta_2}^{1}\left(s\Theta (v_x)^2 + s^3 \Theta^3
\left(\frac{x-x_0}{a} \right)^2v^2\right)e^{2s\varphi}dxdt
\\
&= \int_0^T \int_{\beta_2}^{1} \left(s\Theta (q_x)^2 + s^3 \Theta^3
\left(\frac{x-x_0}{a} \right)^2q^2\right)e^{2s\varphi}dxdt\\
&\le k\int_0^T \int_{\tilde \lambda_2}^{1} \left(s\Theta (q_x)^2 + s^3 \Theta^3
q^2\right)e^{2s\Phi}dxdt\\
& \le C\left(\int_0^T \int_{\tilde \lambda_2}^{1}\frac{H^2}{a} e^{2s\Phi} dxdt + \int_0^T \int_{\lambda_2}^{\beta_2} q^2 e^{2s\Phi}dxdt\right)\le C\int_0^T \int_{\lambda_2}^{\beta_2} (v^2+ v_x^2) e^{2s\varphi} dxdt\\
&\le C\int_0^T \int_{\lambda_2}^{\beta_2} \left(\frac{v^2}{a}+ v_x^2\right) e^{2s\varphi} dxdt\\
&\le C \int_0^T \int_{\omega} \frac{v^2}{a} dxdt + C\int_0^T \int_{\lambda_2}^{\beta_2} v_x^2 e^{2s\varphi} dxdt\\
&\text{(by Proposition \ref{caccio1} for the non degenerate case)}\\
& \le C \int_0^T \int_{\omega} \frac{v^2}{a} dxdt + C\int_0^T \int_{\breve\omega} \frac{v^2}{a} dxdt \le C \int_0^T \int_{\omega} \frac{v^2}{a} dxdt,
\end{aligned}
\end{equation}
for positive constants $k$ and $C$. Thus, by \eqref{w}, \eqref{zN} and \eqref{qN} the conclusion follows.

\end{proof}

\begin{Lemma}\label{obser.regular}
Assume Hypotheses $\ref{ipotesiomega}$ - $\ref{ipa}$. There exists a
positive constant $C_T$ such that every solution $v \in \cal{Q}$ of
$(P_i)$, $i=1,2$, satisfies
\[
\int_0^1   \frac{1}{a}v^2(0,x) dx \le C_T\int_0^T \int_{\omega}\frac{1}{a}v^2(t,x)dxdt.
\]
  \end{Lemma}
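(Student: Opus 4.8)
The plan is the standard combination of the Carleman estimate of Lemma~\ref{lemma3} with an energy identity, localised to a time subinterval on which the Carleman weights are comparable to constants; the regularity $v\in\mathcal{Q}\subset\mathcal{S}=C^1([0,T];\cW)$ is exactly what makes both ingredients legitimate. First I would establish the monotonicity of the energy: using that $v$ solves $(P_i)$, the integration-by-parts formula of Lemma~\ref{green} and the coercivity Propositions~\ref{eq}--\ref{eqN},
\[
\frac{d}{dt}\int_0^1\frac{v^2(t,x)}{a}\,dx = 2\Big(\int_0^1 (v_x)^2\,dx-\lambda\int_0^1\frac{v^2}{ab}\,dx\Big)\ge 2\Lambda\,\|v(t,\cdot)\|^2_{\cK}\ge 0,
\]
so $t\mapsto\|v(t,\cdot)\|^2_{L^2_{\frac{1}{a}}(0,1)}$ is nondecreasing; integrating the inequality $\|v(0,\cdot)\|^2_{L^2_{\frac{1}{a}}(0,1)}\le\|v(t,\cdot)\|^2_{L^2_{\frac{1}{a}}(0,1)}$ over $t\in[T/4,3T/4]$ gives
\[
\|v(0,\cdot)\|^2_{L^2_{\frac{1}{a}}(0,1)}\le\frac{2}{T}\int_{T/4}^{3T/4}\!\!\int_0^1\frac{v^2(t,x)}{a}\,dx\,dt.
\]

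Second I would absorb the Carleman weights. On $[T/4,3T/4]\times[0,1]$ the function $\Theta$ is bounded above and below by positive constants and, since $-d_1d_2\le\psi<0$, $e^{2s\varphi}=e^{2s\Theta\psi}$ is bounded below by a positive constant; hence, fixing $s=s_0$, Lemma~\ref{lemma3} yields positive constants $c_1,c_2$ with
\[
c_1\int_{T/4}^{3T/4}\!\!\int_0^1 (v_x)^2\,dx\,dt+c_2\int_{T/4}^{3T/4}\!\!\int_0^1\Big(\frac{x-x_0}{a}\Big)^2 v^2\,dx\,dt\le C\int_0^T\!\!\int_\omega\frac{v^2}{a}\,dx\,dt.
\]
It remains to dominate $\int_0^1\frac{v^2}{a}\,dx$ by the two quantities on the left, and here I would split according to the position of $x_0$. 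If $x_0\in\omega$ (so $\omega=(\alpha,\beta)$ with $\delta_0:=\min\{x_0-\alpha,\beta-x_0\}>0$), write $\int_0^1\frac{v^2}{a}=\int_\omega\frac{v^2}{a}+\int_{(0,1)\setminus\omega}\frac{v^2}{a}$; on $(0,1)\setminus\omega$ one has $\frac{1}{a}=\big(\frac{x-x_0}{a}\big)^2\frac{a}{(x-x_0)^2}\le\frac{\|a\|_{L^\infty(0,1)}}{\delta_0^2}\big(\frac{x-x_0}{a}\big)^2$, so that piece is controlled by the second Carleman term while the first is already of the target form. If $x_0\notin\omega$, I would instead use that $v(t,\cdot)\in\cK$ together with the norm equivalence of Remark~\ref{rem1} (and Corollary~\ref{equi} in the Neumann case), which gives $\int_0^1\frac{v^2}{a}\,dx\le\|v(t,\cdot)\|^2_{\cK}\le C\int_0^1(v_x)^2\,dx$; in the single borderline situation of Neumann boundary conditions with $K_1+K_2<1$ — where $\cK=\cK_a=\cH^1_{\frac{1}{a}}(0,1)$ and the $\cK$-norm only controls the full $H^1$-norm — Hypothesis~\ref{ipa} is precisely what permits absorbing the extra term $\int_0^1 v^2\le\|a\|_{L^\infty(0,1)}\int_0^1\frac{v^2}{a}$ into the left-hand side. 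In either case, inserting this back into the two displays of the first step produces the claimed inequality with a constant $C_T$ independent of $v$.

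The genuinely delicate point is not in this lemma, which is a soft consequence of Lemma~\ref{lemma3}, but rather the last bookkeeping step: one must check, uniformly over the four degeneracy regimes {\em (WWD)}, {\em (SSD)}, {\em (WSD)}, {\em (SWD)} and for both boundary conditions, that $\int_0^1\frac{v^2}{a}$ is dominated either by the gradient term or by the singular term appearing on the right-hand side of the Carleman estimate, which is exactly why one needs the two separate arguments (the geometric splitting when $x_0\in\omega$, and the Hardy--Poincar\'e norm equivalence, reinforced by Hypothesis~\ref{ipa}, when $x_0\notin\omega$). Everything else is routine: the energy monotonicity above, and the fact that on a compact time subinterval away from $t=0,T$ the weights $\Theta$ and $e^{2s\varphi}$ are comparable to positive constants.
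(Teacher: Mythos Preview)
Your argument is correct and complete, but it follows a route genuinely different from the paper's.

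The paper multiplies the equation by $v_t/a$, obtaining that the \emph{gradient} energy
\[
t\longmapsto \int_0^1(v_x)^2(t,x)\,dx-\lambda\int_0^1\frac{v^2(t,x)}{ab}\,dx
\]
is nondecreasing; integrating over $[T/4,3T/4]$ and invoking Lemma~\ref{lemma3} bounds this quantity at $t=0$ by the observation term, and the paper then does a case analysis on the sign of $\lambda$ to extract $\int_0^1(v_x)^2(0,x)\,dx$. Only at the very end is the Hardy--Poincar\'e inequality (Proposition~\ref{HP} or Corollary~\ref{HPN1} with $p(x)=(x-x_0)^2/a$) used to pass from $\int_0^1(v_x)^2(0,x)\,dx$ to $\int_0^1 v^2(0,x)/a\,dx$. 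By contrast, you multiply by $v/a$, obtain directly that $t\mapsto\|v(t)\|^2_{L^2_{1/a}}$ is nondecreasing via the coercivity Propositions~\ref{eq}--\ref{eqN}, and then must dominate the time-averaged $\int_0^1 v^2/a$ by the two Carleman quantities. Your approach trades the paper's sign-of-$\lambda$ discussion for a geometric case split on whether $x_0\in\omega$, which is arguably cleaner: the $\lambda$-coercivity is handled once, uniformly, by Propositions~\ref{eq}--\ref{eqN}. One small remark: in the Neumann $K_1+K_2<1$, $x_0\notin\omega$ subcase, rather than invoking Remark~\ref{rem1} (whose constant is not explicitly $C_{HP,1}$), it is cleaner to apply Corollary~\ref{HPN1} directly with $p(x)=(x-x_0)^2/a$, so that the absorption constant is exactly the $C_{HP,1}$ appearing in Hypothesis~\ref{ipa}; this is what the paper does, and it makes the role of Hypothesis~\ref{ipa} transparent.
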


  \begin{proof}
  Multiplying the equation of $(P_i)$, $i=1,2$, by $\ds\frac{v_t}{a}$ and integrating
  by parts over $(0,1)$, one has
\[
\begin{aligned}
&0 = \int_0^1\left(v_t+ av_{xx} +\lambda \displaystyle
\frac{v}{b}\right)\frac{v_t}{a} dx= \int_0^1 \left(\frac{1}{a}v_t^2+ v_{xx}v_t +\lambda
\displaystyle \frac{vv_t}{ab}\right)dx = \int_0^1\frac{1}{a}v_t^2dx +
\left[v_xv_t \right]_{x=0}^{x=1} \\&- \int_0^1v_xv_{tx} dx
+\frac{\lambda}{2}\frac{d}{dt}\int_0^1 \displaystyle
\frac{v^2}{ab}dx= \int_0^1\frac{1}{a}v_t^2dx -
\frac{1}{2}\frac{d}{dt}\int_0^1(v_x)^2
+\frac{\lambda}{2}\frac{d}{dt}\int_0^1 \displaystyle \frac{v^2}{ab}dx
 \\&\ge - \frac{1}{2}
\frac{d}{dt}\int_0^1 (v_x)^2dx
+\frac{\lambda}{2}\frac{d}{dt}\int_0^1 \displaystyle
\frac{v^2}{ab}dx.
\end{aligned}
\]
 Thus, the function $$t \mapsto
\displaystyle \int_0^1 (v_x)^2 dx - \lambda\int_0^1
\frac{v^2}{ab}dx$$ is non decreasing for all $t \in [0,T]$. In particular,
\begin{equation}\label{nondecrescente}
\begin{aligned}
\int_0^1 (v_x)^2(0,x)dx-\lambda\int_0^1
\frac{v^2(0,x)}{a(x)b(x)}dx & \le \int_0^1(v_x)^2(t,x)dx -\lambda\int_0^1
\frac{v^2(t,x)}{a(x)b(x)}dx.
\end{aligned}
\end{equation}
{\it\underline{If Dirichlet boundary conditions hold}}, then,
 by Lemma \ref{L2} or \ref{L2''},
\[
\begin{aligned}
&\int_0^1 (v_x)^2(0,x)dx-\lambda\int_0^1
\frac{v^2(0,x)}{a(x)b(x)}dx  \le \int_0^1(v_x)^2(t,x)dx -\lambda\int_0^1
\frac{v^2(t,x)}{a(x)b(x)}dx\\
& \le (1+ |\lambda| C^*)\int_0^1(v_x)^2(t,x)dx.
\end{aligned}
\]
Integrating the previous inequality
over $\displaystyle\left[\frac{T}{4}, \frac{3T}{4} \right]$, $\Theta$ being
bounded therein, we find
\begin{equation}\label{stima2}
\begin{aligned}
&\int_0^1(v_x)^2(0,x) dx - \lambda\int_0^1
\frac{v^2(0,x)}{a(x)b(x)}dx\le \frac{2}{T}(1+ |\lambda|
C^*)\int_{\frac{T}{4}}^{\frac{3T}{4}}\int_0^1(v_x)^2dxdt
\\&\le C_T
\int_{\frac{T}{4}}^{\frac{3T}{4}}\int_0^1s\Theta
(v_x)^2e^{2s\varphi}dxdt\\
&\text{(by Lemma \ref{lemma3})}\\
&\le C \int_0^T
\int_{\omega}\frac{v^2}{a}dxdt,
\end{aligned}
\end{equation}
for a strictly positive constant $C$.

Hence, from the previous inequality, if $\lambda \le 0$
\[
\int_0^1 (v_x)^2(0,x) dx \le \int_0^1(v_x)^2(0,x) dx -
\lambda\int_0^1 \frac{v^2(0,x)}{a(x)b(x)}dx\le C \int_0^T
\int_{\omega}\frac{v^2}{a}dxdt,
\]
for some positive constant $C>0$.

Now, suppose that $\lambda >0$. Then, by
\eqref{stima2}, one has
\[
\begin{aligned}
\int_0^1(v_x)^2(0,x) dx &\le \lambda\int_0^1
\frac{v^2(0,x)}{a(x)b(x)}dx + C \int_0^T \int_{\omega}\frac{v^2}{a}dxdt \\
&(\text{by Lemma \ref{L2} or \ref{L2''}})
\\
&\le
\lambda C^*\int_0^1(v_x)^2(0,x) dx + C \int_0^T
\int_{\omega}\frac{v^2}{a}dxdt.
\end{aligned}\]
Thus
\[
(1-\lambda C^*) \int_0^1(v_x)^2(0,x) dx \le C\int_0^T
\int_{\omega}\frac{v^2}{a}dxdt,
\]
 for a positive constant $C$. In every case, there exists $C >0$
 such that
 \begin{equation}\label{stum}
\int_0^1(v_x)^2(0,x) dx \le C\int_0^T \int_{\omega}\frac{v^2}{a}dxdt.
 \end{equation}
Now, 
 applying the Hardy--Poincar\'{e} inequality (see Proposition \ref{HP})
and \eqref{stum}, we have
\[
\begin{aligned}
\int_0^1 v^2(0,x)\frac{1}{a}dx &= \int_0^1 \frac{p(x)}{(x-x_0)^2}
v^2(0,x)dx\le \mathcal C_{HP} \int_0^1
p(x)(v_x)^2(0,x) dx \\&\le
\gamma\mathcal C_{HP} \int_0^1(v_x)^2(0,x) dx \le C
\int_0^T\int_{\omega}\frac{v^2}{a}dxdt,
\end{aligned}
\]
for a positive constant $C$.  Here $p(x) = \displaystyle
\frac{(x-x_0)^2}{a}$, $\mathcal C_{HP}$ is the Hardy--Poincar\'{e} constant
and $\gamma := \ds \max\left\{\frac{x_0^2}{a(0)}, \frac{(1-x_0)^2}{a(1)}\right\}$. Observe that the function $p$
satisfies the assumptions of Proposition \ref{HP} (with $q= 2-K_1$)
thanks to Lemma \ref{Lemma 2.1}. Hence,  the conclusion
follows.

{\it \underline{If Neumann boundary conditions hold.}} 
Assume, first of all, that $K_1+ K_2 <1$. Then, by \eqref{nondecrescente} and Lemma \ref{L2'}:
\begin{equation}\label{veditu}
\int_0^1 (v_x)^2(0,x)dx-\lambda\int_0^1
\frac{v^2(0,x)}{a(x)b(x)}dx
\le (1+ |\lambda|C^*)\left[\int_0^1(v_x)^2(t,x)dx+ \int_0^1v^2(t,x)dx\right].
\end{equation}
As before, integrating the previous inequality
over $\displaystyle\left[\frac{T}{4}, \frac{3T}{4} \right]$,  we find
\begin{equation}\label{stima2'}
\begin{aligned}
&\int_0^1(v_x)^2(0,x) dx - \lambda\int_0^1
\frac{v^2(0,x)}{a(x)b(x)}dx\\
&\le \frac{2}{T}(1+ |\lambda|
C^*)\int_{\frac{T}{4}}^{\frac{3T}{4}}\int_0^1((v_x)^2+ v^2)dxdt\\
&\le C_T
\int_{\frac{T}{4}}^{\frac{3T}{4}}\int_0^1s\Theta
(v_x)^2e^{2s\varphi}dxdt + C_T
\int_{\frac{T}{4}}^{\frac{3T}{4}}\int_0^1v^2dxdt \\
&\text{(by Lemma \ref{lemma3})}\\
&\le C \int_0^T
\int_{\omega}\frac{v^2}{a}dxdt+ C_T
\int_{\frac{T}{4}}^{\frac{3T}{4}}\int_0^1v^2dxdt,
\end{aligned}
\end{equation}
for a strictly positive constant $C$.

Now, we distinguish between the two cases $x_0 \in \omega$ and $x_0 \not \in \omega$.

If $x_0 \in \omega$, then
\[
\begin{aligned}
\int_{\frac{T}{4}}^{\frac{3T}{4}}\int_0^1v^2dxdt &\le \int_{\frac{T}{4}}^{\frac{3T}{4}}\int_{[0,1]\setminus \omega}v^2dxdt + \int_{\frac{T}{4}}^{\frac{3T}{4}}\int_\omega v^2dxdt\\
& \le C\left(\int_{\frac{T}{4}}^{\frac{3T}{4}}\int_{[0,1]\setminus \omega} s^3\Theta^3 \left( \frac{x-x_0}{a}\right)^2 v^2e^{2s\varphi}dxdt + \int_{\frac{T}{4}}^{\frac{3T}{4}}\int_\omega v^2dxdt\right)\\
& \le  C\left(\int_{Q_T} s^3\Theta^3 \left( \frac{x-x_0}{a}\right)^2 v^2e^{2s\varphi}dxdt + \int_0^T\int_\omega \frac{v^2}{a}dxdt\right)\\
&\text{(by Lemma \ref{lemma3})}\\
&\le C\int_0^T\int_\omega \frac{v^2}{a}dxdt.
\end{aligned}
\]
Substituting this inequality in \eqref{stima2'}, we obtain 
\begin{equation}\label{stima2''}
\int_0^1(v_x)^2(0,x) dx - \lambda\int_0^1
\frac{v^2(0,x)}{a(x)b(x)}dx\le C \int_0^T
\int_{\omega}\frac{v^2}{a}dxdt,
\end{equation}
where $C$ is a positive constant.

If $x_0 \not \in \omega$, then, by Corollary \ref{HPN1}, defining $p$ as before,
\[
\begin{aligned}
\int_{\frac{T}{4}}^{\frac{3T}{4}}\int_0^1v^2dxdt & \le \max_{[0,1]} a \int_{\frac{T}{4}}^{\frac{3T}{4}}\int_0^1\frac{p(x)}{(x-x_0)^2}v^2dxdt \\
& \le \max_{[0,1]} a\mathcal C_{HP}\int_{\frac{T}{4}}^{\frac{3T}{4}}\int_0^1(v_x^2+v^2)dxdt.
\end{aligned}
\]
Hence
\[
(1- \max_{[0,1]} a\mathcal C_{HP})\int_{\frac{T}{4}}^{\frac{3T}{4}}\int_0^1v^2dxdt \le \max_{[0,1]} a\mathcal C_{HP}\int_{\frac{T}{4}}^{\frac{3T}{4}}\int_0^1v_x^2dxdt.
\]
Since, by assumption, $\max_{[0,1]} a< \ds \frac{1}{C_{HP,1}}$ (indeed in this case $\mathcal C_{HP} = C_{HP,1}$), we have, using again Lemma \ref{lemma3},
\[
\begin{aligned}
&\int_{\frac{T}{4}}^{\frac{3T}{4}}\int_0^1v^2dxdt \le \frac{\max_{[0,1]} a\mathcal C_{HP}}{1- \max_{[0,1]} a\mathcal C_{HP}}\int_{\frac{T}{4}}^{\frac{3T}{4}}\int_0^1v_x^2dxdt\\
& \le C_T\int_{\frac{T}{4}}^{\frac{3T}{4}}\int_0^1s\Theta v_x^2e^{2s\varphi}dxdt\le C \int_0^T \int_\omega \frac{v^2}{a} dxdt.
\end{aligned}
\]
Again \eqref{stima2''} holds. In every case, under the given assumptions, one has \eqref{stima2''}.

Now, recall that, by assumption, if $K_1+ K_2<1$ one has that $\lambda <0$. Hence,  proceeding as for the (Dbc), one has
\begin{equation}\label{ultimobo}
\int_0^1 v_x^2(0,x) dx \le C \int_0^T \int_\omega \frac{v^2}{a}dxdt,
\end{equation}
for a positive constant $C$. Now, applying Corollary \ref{HPN1} and defining $p$ as before, it results
\[
\begin{aligned}
\int_0^1 \frac{v^2(0,x)}{a}dx&= \int_0^1 \frac{p(x)}{(x-x_0)^2} v^2(0,x) dx \le \mathcal C_{HP} \left [ \int_0^1 v^2(0,x) dx+ \int_0^1 v_x^2(0,x) dx\right]\\
& \le \mathcal C_{HP}\left[ \max_{[0,1]} a \int_0^1 \frac{v^2(0,x)}{a} dx+ \int_0^1 v_x^2(0,x) dx \right].
\end{aligned}
\]
Hence, by the previous inequality and \eqref{ultimobo},
\[
(1-\max_{[0,1]} a\mathcal C_{HP})  \int_0^1 \frac{v^2(0,x)}{a}dx \le C \int_0^T \int_\omega \frac{v^2}{a}dxdt.
\]
By assumption, the thesis follows.
\vspace{0.2cm} Assume now that one of Hypothesis \ref{Ass03_new}.2, \ref{Ass03_new}.3 or \ref{Ass03_new}.4 holds. Then, using Lemma \ref{L2'''}, \eqref{veditu} becomes
\[
\int_0^1 (v_x)^2(0,x)dx-\lambda\int_0^1
\frac{v^2(0,x)}{a(x)b(x)}dx
\le (1+ |\lambda|C^*)\int_0^1(v_x)^2(t,x)dx.
\]
Proceeding as for the case $K_1+K_2<1$, we can prove that \eqref{stima2''} holds and, if $\lambda \le 0$, the claim follows. Indeed in this case we have again \eqref{ultimobo} and, by Corollary \ref{HPN1} (using the fact that $v(x_0)=0$),
\[
\int_0^1 \frac{v^2(0,x)}{a}dx= \int_0^1 \frac{p(x)}{(x-x_0)^2} v^2(0,x) dx \le \mathcal C_{HP}  \int_0^1 v_x^2(0,x) dx \le C \int_0^T \int_\omega \frac{v^2}{a}dxdt,
\]
for a positive constant $C$. Again $p$ is as before.

On the other hand, if $\lambda >0$, by \eqref{2nondiv} and \eqref{stima2''}, we have
\[
\begin{aligned}
\int_0^1(v_x)^2(0,x) dx &\le \lambda\int_0^1
\frac{v^2(0,x)}{a(x)b(x)}dx + C \int_0^T \int_{\omega}\frac{v^2}{a}dxdt \\ &\le
\lambda C^* \int_0^1(v_x)^2(0,x) dx + C \int_0^T
\int_{\omega}\frac{v^2}{a}dxdt.
\end{aligned}\]
Thus
\[
(1-\lambda C^*) \int_0^1(v_x)^2(0,x) dx \le C\int_0^T
\int_{\omega}\frac{v^2}{a}dxdt,
\]
 for a positive constant $C$. By assumption $\lambda < \ds \frac{1}{C^*}$, hence there exists $C >0$
 such that
 \begin{equation}\label{stum1}
\int_0^1(v_x)^2(0,x) dx \le C\int_0^T \int_{\omega}\frac{v^2}{a}dxdt.
 \end{equation}
Since $v(x_0)=0$ by Lemma \ref{leso}, proceeding as before and using Corollary \ref{HPN1}, we get
\[
\begin{aligned}
\int_0^1 v^2(0,x)\frac{1}{a}dx &= \int_0^1 \frac{p(x)}{(x-x_0)^2}
v^2(0,x)dx\le \mathcal C_{HP} \int_0^1
(v_x)^2(0,x) dx \\
& \le C
\int_0^T\int_{\omega}\frac{v^2}{a}dxdt.
\end{aligned}
\]
Hence, also in this case, the conclusion
follows.

\end{proof}

The proof of Proposition $\ref{obser.}$ follows by a density argument as in
\cite[Proposition 4.1]{fm}.

\section*{Acknowledgments}

The author is a member of the Gruppo Nazionale per l'Analisi Matematica, la Probabilit\`a e le loro Applicazioni (GNAMPA) of the
Istituto Nazionale di Alta Matematica (INdAM) and she is partially supported  by the GDRE (Groupement De Recherche Europ\'een) CONEDP
(\emph{Control of PDEs}) and by the reaserch project {\em Sistemi con operatori irregolari}
of the GNAMPA-INdAM.


\begin{thebibliography}{99}

\bibitem{acf}
F. Alabau-Boussouira, P. Cannarsa, G Fragnelli, {\sl Carleman
estimates for degenerate parabolic operators with applications to
null controllability} J. Evol. Eqs \textbf{6} (2006), 161-204.

 \bibitem{abhn}
W. Arendt, C.J.K. Batty, M. Hieber, F. Neubrander, {\sl
Vector-valued Laplace Transforms and Cauchy Problems}, Monographs in
Mathematics \textbf{96} (2001), Birkh\"{a}user Verlag, Basel.

 \bibitem{bg} P. Baras and J. Goldstein, {\sl Remarks on the inverse square potential
in quantum mechanics}, Differential Equations, North-Holland Math.
Stud. 92, North–Holland, Amsterdam, 1984, 31-–35.

\bibitem{bg1}
P. Baras and J. Goldstein, {\it The heat equation with a singular
potential}, Trans. Amer. Math. Soc. \textbf{284 }(1984),  121–-139.
\bibitem{bcg}
K. Beauchard, P. Cannarsa, R. Guglielmi, {\sl Null controllability
of Grushin-type operators in dimension two},  J. Eur. Math. Soc. (JEMS) \textbf{16} (2014), 67-–101.

%\bibitem{bz}
%K. Beauchard, E. Zuazua, {\sl Some controllability results for the
%2D Kolmogorov equation}, Ann. Inst. H. Poincar\'e Anal. Non
%Lin\'eaire \textbf{26} (2009), 1793--1815.


\bibitem{bfm}
I. Boutaayamou, G. Fragnelli, L. Maniar, {\sl Lipschitz stability  for linear cascade parabolic systems with interior degeneracy}, Electron. J. Diff. Equ. \textbf{2014} (2014), 1--26.

\bibitem{bfm1}
I. Boutaayamou, G. Fragnelli, L. Maniar, {\sl Carleman estimates for parabolic equations with interior degeneracy and Neumann boundary conditions}, J. Anal. Math., to appear. ArXiv: 1509.00863.

\bibitem{b} H. Brezis, {\sl Functional Analysis,
Sobolev Spaces and Partial Differential Equations}, Springer
Science+Business Media, LLC 2011.
\bibitem{bv}
H. Brezis and J.L. V´azquez, {\it Blow-up solutions of some nonlinear
elliptic equations}, Rev. Mat. Complut. \textbf{10} (1997),  443–-469.

  \bibitem{cfr}
   P. Cannarsa, G. Fragnelli, D. Rocchetti,  {\sl Controllability results for a class of one-dimensional degenerate parabolic
          problems in nondivergence form}, J. Evol. Equ. \textbf{8} (2008), 583--616.
  \bibitem{cfr1}
   P. Cannarsa, G. Fragnelli, D. Rocchetti, {\sl Null controllability of degenerate parabolic operators with drift}, Netw.
Heterog. Media \textbf{2}  (2007), 693--713.

\bibitem{cmv}
P. Cannarsa, P. Martinez, J. Vancostenoble, {\it Null
controllability of the degenerate heat equations}, Adv. Differential
Equations \textbf{10} (2005), 153--190.
\bibitem{cmv1}
P. Cannarsa, P. Martinez, J. Vancostenoble, {\it Carleman estimates
for a class of degenerate parabolic operators}, SIAM J. Control
Optim. \textbf{47} (2008),  1--19.
\bibitem{ch}
T. Cazenave, A. Haraux, An Introduction to Semilinear Evolution Equations, Clarendon Press, Oxford
1998.

\bibitem{d}
A.S. de Castro, {\it Bound states of the Dirac equation for a class of
effective quadratic plus inversely quadratic potentials}, Ann. Phys.
\textbf{311} (2004), 170–-181.

\bibitem{dglv}
J.W. Dold, V.A. Galaktionov, A.A. Lacey, and J.L. V´azquez, {\it Rate of
approach to a singular steady state in quasilinear
reaction-diffusion equations}, Ann. Sc. Norm. Super. Pisa Cl. Sci.
\textbf{26} (1998), 663--687.


\bibitem{e}
S. Ervedoza, {\it Null Controllability for a singular heat equation:
Carleman estimates and Hardy inequalities}, Com. in Partial Diff.
Eq. \textbf{33} (2008), 1996-2019.
\bibitem{fs} M. Fotouhi, L. Salimi, {\it Controllability results for a class of one dimensional degenerate/singular parabolic
equations}, Commun. Pure Appl. Anal. \textbf{12}
(2013), 1415--1430.
\bibitem{fs1}
M. Fotouhi, L. Salimi, {\it Null controllability of
degenerate/singular parabolic equations}, J. Dyn. Control Syst. {\bf 18} (2012), 573--602.
\bibitem{fl}  G. Floridia, {\sl Approximate controllability for nonlinear degenerate parabolic problems with bilinear control}, J. Differential Equations \textbf{257} (2014),  3382–-3422.
 \bibitem{f} G. Fragnelli, {\sl Null controllability of degenerate parabolic equations in non divergence form via Carleman estimates}, Discrete Contin.
     Dyn. Syst. Ser. S \textbf{ 6} (2013), 687--701.

\bibitem{fm}
G. Fragnelli, D. Mugnai, {\sl Carleman estimates and observability
inequalities for parabolic equations with interior degeneracy},
Advances in Nonlinear Analysis \textbf{2} (2013), 339--378.

\bibitem{fm1}
G. Fragnelli, D. Mugnai, {\sl Carleman estimates, observability inequalities
and null controllability for interior degenerate
non smooth parabolic equations},
Mem. Amer. Math. Soc., to appear. ArXiv: 1508.04014.

\bibitem{fm2}
G. Fragnelli, D. Mugnai, {\sl Carleman estimates for singular parabolic equations with interior degeneracy and nonsmooth coefficients},
submitted. ArXiv: 1507.07786.


\bibitem{fggr} G. Fragnelli, G. Ruiz Goldstein, J.A. Goldstein, S.
Romanelli, {\sl Generators with interior degeneracy on spaces of
$L^2$ type}, Electron. J. Differential Equations \textbf{2012}
(2012), 1--30.


\bibitem{gv}
V. Galaktionov and J.L. V´azquez,  {\it Continuation of blow-up solutions
of nonlinear heat equations in several space dimensions}, Comm. Pure
Appl. Math. \textbf{50} (1997), 1--67.


\bibitem{LRL}
J. Le Rousseau, G. Lebeau, {\it  On carleman estimates for elliptic
and parabolic operators. applications to unique continuation and
control of parabolic equations}, ESAIM Control Optim. Calc. Var. {\bf18} (2012), 712--747.
\bibitem{Taylor}
M.E. Taylor, Partial Differential Equations I. Basic theory. Second edition. Applied Mathematical Sciences 115. Springer, New York, 2011.



 \bibitem{v1} J. Vancostenoble, {\sl Improved Hardy--Poincar\'e inequalities and sharp Carleman estimates for degenerate/singular parabolic problems},
 Discrete Contin. Dyn. Syst. Ser. S \textbf{ 4} (2011), 761--790.
\bibitem{vz2} J. Vancostenoble, E. Zuazua, {\sl Null controllability for the heat equation with singular inverse-square
potentials}, J. Funct. Anal. {\bf254} (2008), 1864--1902.


 \end{thebibliography}
\end{document}